\newtheorem{thm}{Theorem}[section]
\newtheorem{cor}[thm]{Corollary}
\newtheorem{lemma}[thm]{Lemma}
\theoremstyle{remark}
\newtheorem{remark}[thm]{Remark}
\newtheorem*{notation}{Notation}
\theoremstyle{definition}
\newtheorem{definition}[thm]{Definition}
\numberwithin{equation}{section}
\DeclareMathOperator{\Hom}{Hom}
\newcommand{\shHom}{\underline{\Hom}}
\DeclareMathOperator{\End}{End}
\newcommand{\shEnd}{\underline{\End}}
\DeclareMathOperator{\Aut}{Aut}
\newcommand{\shAut}{\underline{\Aut}}
\DeclareMathOperator{\Der}{Der}
\DeclareMathOperator{\coComm}{coComm}
\DeclareMathOperator{\coLie}{coLie}
\DeclareMathOperator{\MC}{MC}
\DeclareMathOperator{\Def}{Def}
\DeclareMathOperator{\Symm}{Sym}
\DeclareMathOperator{\DR}{DR}
\DeclareMathOperator{\sgn}{sgn}
\DeclareMathOperator{\rk}{rk}
\DeclareMathOperator{\bicat}{\mathbf{Bic}}
\newcommand\dual[1]{{#1}^{\vee}}
\newcommand{\id}{{\mathtt{Id}}}
\newcommand{\pr}{{\mathtt{pr}}}
\newcommand{\vac}{{\mathbf{1}}}
\newcommand{\ip}{{\langle\ ,\ \rangle}}
\newcommand{\op}{\mathtt{op}}
\newcommand{\dR}{\widehat{\mathrm{d}}_\mathrm{d\!\! R}}
\newcommand{\bc}{\mathtt{B}}
\newcommand{\dbar}{\overline{\partial}}
\begin{document}

\title{Formality theorem for gerbes}

\author[P.Bressler]{Paul Bressler}
\address {Universidad de los Andes, Bogot\'a} \email{paul.bressler@gmail.com}

\author[A.Gorokhovsky]{Alexander Gorokhovsky}
\address{Department of Mathematics, UCB 395,
University of Colorado, Boulder, CO~80309-0395, USA}
\email{Alexander.Gorokhovsky@colorado.edu}

\author[R.Nest]{Ryszard Nest}
\address{Department of Mathematics,
Copenhagen University, Universitetsparken 5, 2100 Copenhagen, Denmark}
 \email{rnest@math.ku.dk}

\author[B.Tsygan]{Boris Tsygan}
\address{Department of
Mathematics, Northwestern University, Evanston, IL 60208-2730, USA}
\email{b-tsygan@northwestern.edu}

\begin{abstract}
The main result of the present paper is an analogue of Kontsevich formality theorem in the context of   the deformation theory of gerbes.   We construct an $L_\infty$ deformation of the Schouten algebra of multi-vectors which controls the deformation theory of a gerbe.
\end{abstract}

\thanks{
A. Gorokhovsky was partially supported by NSF grant DMS-0900968, R. Nest was partially supported by the Danish National Research Foundation through the Centre
for Symmetry and Deformation (DNRF92), B. Tsygan was partially
supported by NSF grant DMS-0906391}
\maketitle


\section{Introduction}
The main result of the present paper is an analogue of Kontsevich formality theorem in the context of   the deformation theory of gerbes. A   differential graded Lie algebra (DGLA) controlling the deformation theory of gerbes was constructed in
(\cite{BGNT, BGNT1, BGNT3/2}). As it turnes out it is not quite formal in the sense of D. Sullivan (see for example \cite{DGMS}). More precisely, it turns out to be $L_\infty$ quasi-isomorphic to the  algebra of multi-vectors with the $L_\infty$ structure determined by the class of the gerbe.
The argument uses  a proof of the theorem of M.~Kontsevich on the formality of the Gerstenhaber algebra of a regular commutative algebra over a field of characteristic zero.

For simplicity, consider for the moment  the case of a $C^\infty$-manifold $X$ with the structure sheaf $\mathcal{O}_X$ of \emph{complex valued} smooth functions. With an $\mathcal{O}_X^\times$-gerbe there is a canonically associated ``linear object,'' the twisted form  $\mathcal{S}$ of
 $\mathcal{O}_X$ (Section \ref{defalgstack}). Twisted forms of $\mathcal{O}_X$  are classified up to equivalence by $H^2(X;\mathcal{O}_X^\times)\cong H^3(X;\mathbb{Z})$.

One can formulate the formal deformation theory of algebroid stacks (\cite{lvdb2,lowen}) which leads to the $2$-groupoid valued functor $\Def(\mathcal{S})$ of commutative Artin $\mathbb{C}$-algebras. We review this construction in  Section \ref{defalgstack}. It is
natural to expect that the deformation theory of algebroid stacks is ``controlled" by a DGLA.

For a nilpotent DGLA $\mathfrak{g}$ which satisfies $\mathfrak{g}^i = 0$ for $i \leq -2$, P.~Deligne \cite{Del} and, independently, E.~Getzler \cite{G1} associated the (strict) $2$-groupoid, denoted $\MC^2(\mathfrak{g})$ (see \cite{BGNT2} 3.3.2), which we refer to as the Deligne $2$-goupoid.

The DGLA $\mathfrak{g}_{\mathtt{DR}}(\mathcal{J}_X)_\omega$ (see \ref{subsection: Hochschild cochains in formal geometry}) is the de Rham complex of the Gerstenhaber algebra of the algebra $\mathcal{J}_X$ of jets of functions twisted by a representative $\omega$ of the class of the gerbe $\mathcal{S}$.
The following theorem is proved in \cite{BGNT1} (Theorem 1 of loc. cit.):

\medskip

\noindent{\it For any Artin algebra $R$ with maximal ideal $\mathfrak{m}_R$ there is an equivalence of
$2$-groupoids
\[
\MC^2(\mathfrak{g}_{\mathtt{DR}}(\mathcal{J}_X)_\omega\otimes\mathfrak{m}_R) \cong \Def(\mathcal{S})(R)
\]
natural in $R$.}

In Section \ref{structures on multivectors} we show that with every closed $3$-form one can associate a ternary operation on the algebra of multi-vector fields on $X$.
The  Schouten algebra of multi-vector fields   with this additional ternary operation becomes an $L_\infty$-algebra. In particular, a choice of de Rham representative $H$ of the class of $\mathcal{S}$ leads to an $L_\infty$-algebra $\mathfrak{s}(\mathcal{O}_X)_H$.

The principal technical result of the present paper (Theorem \ref{thm: hoch jet is schouten twist}) says  in this context the following:
\begin{thm}\label{thm:1}
Let $X$ be  a $C^\infty$-manifold. Then the DGLA $\mathfrak{g}_{\mathtt{DR}}(\mathcal{J}_X)_\omega$ is $L_\infty$ quasi-isomorphic to the $L_\infty$-algebra $\mathfrak{s}(\mathcal{O}_X)_H$.
\end{thm}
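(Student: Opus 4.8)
The plan is to deduce the theorem from Kontsevich's formality theorem on the formal polydisk, globalized by a Fedosov-type resolution in the spirit of Dolgushev, together with a careful analysis of the twist by the class of the gerbe. First I would realize both sides as de Rham complexes of fiberwise objects over $X$. By construction $\mathfrak g_{\mathtt{DR}}(\mathcal J_X)_\omega$ is obtained from the de Rham complex $\Omega^\bullet_X\hat\otimes_{\mathcal O_X}C^\bullet(\mathcal J_X)[1]$ of Hochschild cochains of the jet algebra, equipped with a flat Fedosov connection, by twisting with a cocycle $\omega$ representing the gerbe. On the Schouten side I would introduce the parallel object $\mathfrak t_{\mathtt{DR}}(\mathcal J_X)$, the de Rham complex of fiberwise polyvector fields on the jets with its Fedosov connection, which is quasi-isomorphic --- via the Fedosov resolution map --- to the untwisted Schouten algebra $\mathfrak s(\mathcal O_X)$. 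A de Rham representative $H$ of the class of $\mathcal S$ has a canonical Fedosov lift $\widetilde H$ to $\mathfrak t_{\mathtt{DR}}(\mathcal J_X)$ which, fiberwise, is nothing more than the unit decorated by $H$, and which sits in the same degree as $\omega$; this is the ``degree one up'' that accounts for $\MC^2$ rather than $\MC^1$ governing the deformation theory.

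Next I would globalize Kontsevich's formality. His $L_\infty$-quasi-isomorphism from polyvector fields to polydifferential operators on a formal neighbourhood is $\mathtt{GL}$-equivariant, so by formal-geometry descent --- the Fedosov-resolution argument of Dolgushev's proof of the global formality theorem --- it induces an $L_\infty$-quasi-isomorphism $\mathcal U^{\mathtt{DR}}\colon\mathfrak t_{\mathtt{DR}}(\mathcal J_X)\to\mathfrak g_{\mathtt{DR}}(\mathcal J_X)$ intertwining the two Fedosov connections. Since an $L_\infty$-morphism is compatible with twisting, the twist of $\mathcal U^{\mathtt{DR}}$ by $\widetilde H$ is an $L_\infty$-quasi-isomorphism $\mathfrak t_{\mathtt{DR}}(\mathcal J_X)_{\widetilde H}\to\mathfrak g_{\mathtt{DR}}(\mathcal J_X)_{\mathcal U^{\mathtt{DR}}_*(\widetilde H)}$, and one is reduced to identifying $\mathcal U^{\mathtt{DR}}_*(\widetilde H)$ with $\omega$ up to equivalence. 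The point is that $\widetilde H$ is, fiberwise, just the unit function, so every higher Taylor component $\mathcal U^{\mathtt{DR}}_n(\widetilde H,\dots,\widetilde H)$ with $n\ge 2$ would be a Hochschild cochain of negative arity, hence zero; thus $\mathcal U^{\mathtt{DR}}_*(\widetilde H)=\mathcal U^{\mathtt{DR}}_1(\widetilde H)$ is simply the fiberwise Hochschild-Kostant-Rosenberg image of $\widetilde H$, and its class in $\mathfrak g_{\mathtt{DR}}(\mathcal J_X)$ is the de Rham class $[H]$, i.e.\ the class of the gerbe. It is therefore cohomologous --- and hence, by invariance of $L_\infty$-quasi-isomorphism type under equivalent twists, equivalent --- to $\omega$.

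It remains to identify $\mathfrak t_{\mathtt{DR}}(\mathcal J_X)_{\widetilde H}$ with $\mathfrak s(\mathcal O_X)_H$. The Fedosov resolution map $\mathfrak s(\mathcal O_X)\to\mathfrak t_{\mathtt{DR}}(\mathcal J_X)$ does not survive twisting the target by $\widetilde H$; instead I would carry the twist through the contracting homotopy of the Fedosov resolution (homotopy transfer of $L_\infty$-structures), obtaining a transferred structure on $\mathfrak s(\mathcal O_X)$ together with an $L_\infty$-quasi-isomorphism onto $\mathfrak t_{\mathtt{DR}}(\mathcal J_X)_{\widetilde H}$. A count of de Rham form degrees --- a single factor of the $3$-form $H$ already exhausts the room available for a correction term, and $\mathrm{d}H=0$ annihilates the potential obstruction --- forces the transferred structure to have $\ell_1=0$, $\ell_2$ the Schouten bracket, $\ell_3$ exactly the ternary operation attached to $H$ in Section \ref{structures on multivectors}, and $\ell_n=0$ for $n\ge 4$; that is, $\mathfrak t_{\mathtt{DR}}(\mathcal J_X)_{\widetilde H}\simeq\mathfrak s(\mathcal O_X)_H$. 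Composing with the previous step yields the asserted $L_\infty$-quasi-isomorphism. I expect the main obstacle to be precisely this last bookkeeping (and its mirror when transporting the twist in the previous step): verifying that after twisting no spurious higher brackets or higher Taylor components survive, and --- more delicately --- that the one new operation that does survive is the intended ternary bracket \emph{on the nose}, with the correct signs and combinatorial normalization, rather than merely something homotopic to it; one must also check independence of the auxiliary choices (the representative $H$ within its class, the connection, the Fedosov data). The degree and form-degree constraints are exactly what make the a priori unbounded homotopy-transfer sums terminate.
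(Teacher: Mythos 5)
Your route is genuinely different from the paper's: you globalize Kontsevich's local $L_\infty$-morphism via a Dolgushev--Fedosov resolution and then transport the twist by pushing forward a Maurer--Cartan element and performing homotopy transfer, whereas the paper never invokes Kontsevich's explicit morphism. It instead runs the Dolgushev--Tamarkin--Tsygan $\mathbf{e_2}$-coalgebra zigzag $\Omega_{\mathbf{e_2}}(\mathbb{F}_{\dual{\mathbf{e_2}}}(C^\bullet(\mathcal{J})),M)\leftarrow\Omega_{\mathbf{e_2}}(\Xi(\mathcal{J}),d_{V^\bullet(\mathcal{J})})\rightarrow V^\bullet(\mathcal{J})$ fiberwise on jets, observes that the inner operations $i_a$ commute with every map in the zigzag, twists the whole diagram by $i_{\overline B}$, and proves the twisted maps are still quasi-isomorphisms by filtering on ambient form degree (Theorem \ref{diag e2 coalg jets de Rham twisted are filtered quisms}). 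Your middle step is essentially sound once one is careful about what $\widetilde H$ is: $H\otimes 1\in\Omega^3_X\otimes V^0$ has degree $2$ in $\mathfrak{t}_{\mathtt{DR}}$, not $1$, so it is not a Maurer--Cartan element; the element you actually need is a lift $B\in\Gamma(X;\Omega^2_X\otimes\mathcal{J})$ with $\dR B$ cohomologous to $j^\infty(H)$, which is the paper's $B$ from \ref{subsection: twisted forms}. With that reading, your arity count ($\mathcal{U}_n$ of $n$ fiberwise functions lands in cochains of arity $2-2n<0$ for $n\ge 2$) is correct and $\mathcal{U}_*(B)=B$. You should still say why quasi-isomorphisms survive the twist; the same form-degree filtration the paper uses does the job.

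The genuine gap is your last step. Twisting $\mathfrak{t}_{\mathtt{DR}}(\mathcal{J})$ by the Maurer--Cartan element $B$ only modifies the differential (it adds $i_{\overline B}$); no ternary bracket appears on that side. The entire content of the theorem is therefore concentrated in your claim that homotopy transfer along the Fedosov contraction carries $\DR(V^\bullet(\mathcal{J})[1])_{\overline B}$ to $\mathfrak{s}(\mathcal{O}_X)_H$ with $\ell_3$ equal to $\Phi(H)$ \emph{on the nose} and $\ell_n=0$ for $n\ge 4$. Your degree-counting argument for this is not a proof: the transferred $\ell_n$ is a sum over trees with $n-1$ Schouten vertices, $k$ insertions of $i_{\overline B}$ (each raising form degree by $2$) and homotopies $h$ (each lowering it by $1$), and the balance $2k=\#h$ does not by itself kill all trees with $n\ge4$ leaves, nor does it identify the surviving $n=3$ contribution with the specific antisymmetrized triple contraction defining $\Phi(H)$. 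The paper avoids this computation entirely via Lemma \ref{de Rham to Chevalley}: the map $\Phi\colon\Omega^\bullet_X[2]\to C^\bullet(V^\bullet[1];V^\bullet[1])[1]$ is a morphism of DGLA from an \emph{abelian} source, so the cohomologous closed cocycles $\dR B$ (which gives the unary twist by $\overline B$) and $j^\infty(H)$ (which gives, by definition, the ternary bracket of $\mathfrak{s}(\mathcal{O}_X)_H$) are gauge-equivalent Maurer--Cartan elements of the deformation complex and hence define $L_\infty$-quasi-isomorphic structures --- no ``on the nose'' identification is ever needed. To complete your argument you would have to either carry out the transfer computation in full or reproduce something equivalent to Lemma \ref{de Rham to Chevalley} together with the statement that cohomologous cocycles yield quasi-isomorphic $L_\infty$-structures.
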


In conjunction with our previous results (see \cite{BGNT1,BGNT3/2,BGNT2}) we get the description of   $\Def(\mathcal{S})$ in terms of the $L_\infty$-algebra $\mathfrak{s}(\mathcal{O}_X)_H$ (see Theorem \ref{thm:main}).

Let us explain this passage in more detail. We would like to appeal to the invariance of Deligne 2-groupoid construction under quasi-isomorphisms. However, since one of the algebras appearing in Theorem \ref{thm:1} is not a DGLA but an $L_\infty$-algebra, we need an extension of the notion of Deligne 2-groupoid to this context. In order to do this, we use the construction of Hinich \cite{H1}, extended by Getzler in \cite{G1} to the case of nilpotent $L_\infty$ algebras, which associates a Kan simplicial set $\Sigma (\mathfrak{g})$ to any nilpotent $L_\infty$-algebra $\mathfrak{g}$.

Duskin's work (\cite{D}) associates with any Kan simplicial set $K$ a bigroupoid $\bicat\Pi_2 (K)$, see Section \ref{sec:6.3} for a brief description. Thus with a nilpotent  DGLA $\mathfrak{g}$ which satisfies $\mathfrak{g}^i = 0$ for $i \leq -2$ one can associate bigroupoids $\bicat\Pi_2 (\Sigma (\mathfrak{g}))$ and $\MC^2(\mathfrak{g})$. In \cite{BGNT2} we show that in this situation there is a natural equivalence $ \MC^2(\mathfrak{g}) \cong \bicat\Pi_2 (\Sigma (\mathfrak{g}))$
(see Theorem 3.7 or, alternatively, Theorem 6.6 of \cite{BGNT2}).  This statement, combined with Theorem \ref{thm:1} yields the following result:
\begin{thm} For any Artin algebra $R$ with maximal ideal $\mathfrak{m}_R$ there is an equivalence of
$2$-groupoids
\[
\Def(\mathcal{S})(R) \cong \bicat\Pi_2(\Sigma(\mathfrak{s}(\mathcal{O}_X)_H\otimes\mathfrak{m}_R))
\]
natural in $R$.
\end{thm}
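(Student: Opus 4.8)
The plan is to deduce the final theorem by chaining together two already-established equivalences and appealing to the invariance of the relevant categorical constructions under $L_\infty$ quasi-isomorphism. First I would recall the equivalence proved in \cite{BGNT1} (stated in the excerpt), which identifies $\Def(\mathcal{S})(R)$ with $\MC^2(\mathfrak{g}_{\mathtt{DR}}(\mathcal{J}_X)_\omega \otimes \mathfrak{m}_R)$, naturally in the Artin algebra $R$. Next, since $\mathfrak{g}_{\mathtt{DR}}(\mathcal{J}_X)_\omega$ is a DGLA concentrated in degrees $\geq -1$, the equivalence $\MC^2(\mathfrak{g}) \cong \bicat\Pi_2(\Sigma(\mathfrak{g}))$ of \cite{BGNT2} (Theorem 3.7 / Theorem 6.6 of loc. cit.) applies with $\mathfrak{g} = \mathfrak{g}_{\mathtt{DR}}(\mathcal{J}_X)_\omega \otimes \mathfrak{m}_R$, giving a natural equivalence $\Def(\mathcal{S})(R) \cong \bicat\Pi_2(\Sigma(\mathfrak{g}_{\mathtt{DR}}(\mathcal{J}_X)_\omega \otimes \mathfrak{m}_R))$.

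The remaining step is to replace $\mathfrak{g}_{\mathtt{DR}}(\mathcal{J}_X)_\omega$ by the $L_\infty$-algebra $\mathfrak{s}(\mathcal{O}_X)_H$ on the right-hand side. Theorem \ref{thm:1} provides an $L_\infty$ quasi-isomorphism $\mathfrak{g}_{\mathtt{DR}}(\mathcal{J}_X)_\omega \to \mathfrak{s}(\mathcal{O}_X)_H$ (or a zig-zag of such). Tensoring with the nilpotent ideal $\mathfrak{m}_R$ produces an $L_\infty$ quasi-isomorphism of nilpotent $L_\infty$-algebras $\mathfrak{g}_{\mathtt{DR}}(\mathcal{J}_X)_\omega \otimes \mathfrak{m}_R \to \mathfrak{s}(\mathcal{O}_X)_H \otimes \mathfrak{m}_R$; here one uses that $-\otimes \mathfrak{m}_R$ is exact and preserves the structure maps, and that nilpotence guarantees the functor $\Sigma$ is defined. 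I would then invoke the homotopy invariance of $\Sigma$: a quasi-isomorphism of nilpotent $L_\infty$-algebras induces a weak equivalence (indeed a homotopy equivalence, by the Kan property) of the associated simplicial sets $\Sigma(\mathfrak{g}_{\mathtt{DR}}(\mathcal{J}_X)_\omega \otimes \mathfrak{m}_R) \to \Sigma(\mathfrak{s}(\mathcal{O}_X)_H \otimes \mathfrak{m}_R)$, this being the basic property of the Hinich--Getzler functor \cite{H1, G1}. Applying Duskin's functor $\bicat\Pi_2$, which sends weak equivalences of Kan complexes to equivalences of bigroupoids (since $\bicat\Pi_2(K)$ depends only on the 2-truncation of the homotopy type of $K$), yields the desired equivalence $\bicat\Pi_2(\Sigma(\mathfrak{g}_{\mathtt{DR}}(\mathcal{J}_X)_\omega \otimes \mathfrak{m}_R)) \cong \bicat\Pi_2(\Sigma(\mathfrak{s}(\mathcal{O}_X)_H \otimes \mathfrak{m}_R))$.

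Finally I would assemble the chain
\[
\Def(\mathcal{S})(R) \cong \MC^2(\mathfrak{g}_{\mathtt{DR}}(\mathcal{J}_X)_\omega \otimes \mathfrak{m}_R) \cong \bicat\Pi_2(\Sigma(\mathfrak{g}_{\mathtt{DR}}(\mathcal{J}_X)_\omega \otimes \mathfrak{m}_R)) \cong \bicat\Pi_2(\Sigma(\mathfrak{s}(\mathcal{O}_X)_H \otimes \mathfrak{m}_R)),
\]
and check that every equivalence in the chain is natural in $R$: the first by the cited theorem of \cite{BGNT1}, the second by naturality of the comparison in \cite{BGNT2}, and the third because the $L_\infty$ quasi-isomorphism of Theorem \ref{thm:1} is a fixed morphism independent of $R$, so its base change along $R \to R'$ commutes with everything in sight. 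Composing gives the stated natural equivalence.

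I expect the main subtlety to be the homotopy invariance input in the third step: one must know not merely that $\Sigma$ sends quasi-isomorphisms to weak equivalences, but that $\bicat\Pi_2 \circ \Sigma$ is genuinely invariant — i.e. that passing to Duskin's bigroupoid only remembers $\pi_0, \pi_1, \pi_2$ and so is insensitive to the higher homotopy discrepancies a general $L_\infty$ quasi-isomorphism might introduce. This is exactly what the degree bounds ($\mathfrak{g}^i = 0$ for $i \leq -2$, and the truncation implicit in working with 2-groupoids) are there to control, and it is the reason the comparison results of \cite{BGNT2} are formulated the way they are; modulo citing those results the argument is a formal concatenation.
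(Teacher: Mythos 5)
Your proposal is correct and follows essentially the same route as the paper: the paper's proof of Theorem \ref{thm:main} likewise chains the equivalence $\Def(\mathcal{S})(R)\cong\MC^2(\mathfrak{g}_{\mathtt{DR}}(\mathcal{J})_{\overline{B}}\otimes\mathfrak{m}_R)$ from \cite{BGNT1}, the comparison $\MC^2(\mathfrak{g})\cong\bicat\Pi_2(\Sigma(\mathfrak{g}))$ via Theorem 3.7 of \cite{BGNT2}, and the homotopy invariance of $\Sigma$ under $L_\infty$ quasi-isomorphism (Proposition 3.4 of \cite{BGNT2}) applied to Theorem \ref{thm: hoch jet is schouten twist}, together with Duskin's result that $\id\to\Pi_2$ is a homotopy equivalence on $2$-truncated Kan complexes. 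The only difference is cosmetic ordering of the links in the chain.
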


Objects of the bigroupoid $\bicat\Pi_2(\Sigma(\mathfrak{s}(\mathcal{O}_X)_H\otimes\mathfrak{m}_R)) $ are Maurer-Cartan elements of the $L_\infty$-algebra $\mathfrak{s}_{\mathtt{DR}}(\mathcal{O}_X)_H  \otimes {\mathfrak
{m}}_R$. These are   \emph{twisted Poisson structures} in the terminology of P.~\v Severa
and A.~Weinstein, \cite{SW}, i.e. elements $\pi \in \Gamma(X; \bigwedge^2\mathcal{T}_X)\otimes{\mathfrak {m}}_R$, satisfying the equation
\[
[\pi,\pi] = \Phi(H)(\pi,\pi,\pi),
\]
see Remark \ref{remark on C-infty}. A construction of an algebroid stack associated to a twisted formal Poisson structure (using the formality theorem) was proposed by P.~\v Severa in \cite{S}.

The proof of Theorem \ref{thm:1} is based on the approach of \cite{DTT} to the theorem of M.~Kontsevich conjectured in \cite{K1} and proven
in \cite{K} on formality of the Gerstenhaber algebra of a regular commutative algebra over a field of characteristic zero.

To give a uniform treatment of the case of a plain $C^\infty$ manifold discussed above as well as of the complex-analytic and other settings,
we work in this paper in the natural generality of a $C^\infty$ manifold $X$ equipped with an integrable complex distribution (and, eventually, with two transverse integrable distributions).

More precisely, suppose that $X$ is a $C^\infty$ manifold and $\mathcal{P}$ is an integrable complex distribution such that the $\mathcal{P}$-Dolbault Lemma \ref{d-bar cohomology of functions} holds. We assume that $\mathcal{P}$ admits an integrable complement with the same property. These assumptions are fulfilled when $\mathcal{P}$ is a complex structure or trivial. We denote by $\mathcal{O}_{X/\mathcal{P}}$ the sheaf of (complex valued) $\mathcal{P}$-holomorphic functions and by $F_\bullet\Omega^\bullet_X$ the Hodge filtration.

Let $\mathcal{S}$ be a twisted form of $\mathcal{O}_{X/\mathcal{P}}$ (equivalently, a $\mathcal{O}^\times_{X/\mathcal{P}}$-gerbe). The class of $\mathcal{S}$ in de Rham cohomology can be represented by a form $H \in \Gamma(X;F_{-1}\Omega^3_X)$. Such a form $H$ determines an $L_\infty$-algebra structure on the $\mathcal{P}$-Dolbeault resolution of $\mathcal{P}$-holomorphic multi-vector fields (see \ref{subsection: dolbeault complexes} for details). We denote this $L_\infty$-algebra by $\mathfrak{s}(\mathcal{O}_{X/\mathcal{P}})$. In this setting the main theorem (Theorem \ref{thm:main}) says:
\begin{thm}
Suppose that $X$ is a $C^\infty$ manifold equipped with a pair of complementary complex integrable distributions $\mathcal{P}$ and $\mathcal{Q}$, and $\mathcal{S}$ is a twisted form of $\mathcal{O}_{X/\mathcal{P}}$ (\ref{subsection:
twisted forms}). Let $H\in\Gamma(X;\noindent F_{-1}\Omega^3_X)$ be a representative of
$[\mathcal{S}]$ (\ref{subsection: twisted forms}). Then, for any Artin algebra $R$ with maximal ideal
$\mathfrak{m}_R$ there is an equivalence of bi-groupoids
\[
\bicat\Pi_2(\Sigma(\mathfrak{s}(\mathcal{O}_{X/\mathcal{P}})_H\otimes\mathfrak{m}_R)) \cong \Def(\mathcal{S})(R),
\]
natural in $R$.
\end{thm}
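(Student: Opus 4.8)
The plan is to realize $\Def(\mathcal{S})(R)$ and $\bicat\Pi_2(\Sigma(\mathfrak{s}(\mathcal{O}_{X/\mathcal{P}})_H\otimes\mathfrak{m}_R))$ as the two ends of a chain of natural equivalences of bi-groupoids whose only non-formal link is the formality theorem. First I would invoke the analogue for twisted forms of $\mathcal{O}_{X/\mathcal{P}}$ of Theorem~1 of \cite{BGNT1}, proved along the same lines but with the $\mathcal{P}$-Dolbeault Lemma in place of the ordinary Poincar\'e lemma: having chosen a representative $\omega$ of $[\mathcal{S}]$ compatible with $H$, there is for every Artin algebra $R$ an equivalence of $2$-groupoids
\[
\Def(\mathcal{S})(R)\;\cong\;\MC^2\bigl(\mathfrak{g}_{\mathtt{DR}}(\mathcal{J}_{X/\mathcal{P}})_\omega\otimes\mathfrak{m}_R\bigr),
\]
natural in $R$, where $\mathfrak{g}_{\mathtt{DR}}(\mathcal{J}_{X/\mathcal{P}})_\omega$ denotes the de Rham complex of the $\omega$-twisted Gerstenhaber algebra of the jets of $\mathcal{P}$-holomorphic functions, i.e.\ the analogue in the present setting of $\mathfrak{g}_{\mathtt{DR}}(\mathcal{J}_X)_\omega$. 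Tensored with the nilpotent ideal $\mathfrak{m}_R$ this is a nilpotent DGLA with components in degrees $\geq -1$, so the comparison theorem of \cite{BGNT2} (Theorem~3.7, equivalently Theorem~6.6) gives a natural equivalence
\[
\MC^2\bigl(\mathfrak{g}_{\mathtt{DR}}(\mathcal{J}_{X/\mathcal{P}})_\omega\otimes\mathfrak{m}_R\bigr)\;\cong\;\bicat\Pi_2\bigl(\Sigma(\mathfrak{g}_{\mathtt{DR}}(\mathcal{J}_{X/\mathcal{P}})_\omega\otimes\mathfrak{m}_R)\bigr).
\]

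It remains to compare the right-hand side with $\bicat\Pi_2(\Sigma(\mathfrak{s}(\mathcal{O}_{X/\mathcal{P}})_H\otimes\mathfrak{m}_R))$, and here the formality theorem, Theorem~\ref{thm: hoch jet is schouten twist}, enters: it produces an $L_\infty$ quasi-isomorphism between $\mathfrak{g}_{\mathtt{DR}}(\mathcal{J}_{X/\mathcal{P}})_\omega$ and $\mathfrak{s}(\mathcal{O}_{X/\mathcal{P}})_H$. Since both are complexes of $\mathbb{C}$-vector spaces and $\mathfrak{m}_R$ is finite-dimensional over $\mathbb{C}$, the functor $(-)\otimes\mathfrak{m}_R$ is exact, so this morphism is still an $L_\infty$ quasi-isomorphism after $(-)\otimes\mathfrak{m}_R$; moreover both sides then become nilpotent $L_\infty$-algebras, the filtration by powers of $\mathfrak{m}_R$ being respected by all the bracket operations and $\mathfrak{m}_R^N=0$ for $N\gg0$. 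By Hinich \cite{H1} and Getzler \cite{G1} the functor $\Sigma$ carries quasi-isomorphisms of nilpotent $L_\infty$-algebras to weak equivalences of Kan simplicial sets; and $\bicat\Pi_2$, the Duskin fundamental bigroupoid, depends only on the homotopy $2$-type, hence carries weak equivalences of Kan complexes to biequivalences of bigroupoids. Applying $\Sigma$ and then $\bicat\Pi_2$ to the quasi-isomorphism of Theorem~\ref{thm: hoch jet is schouten twist} tensored with $\mathfrak{m}_R$ therefore yields a biequivalence
\[
\bicat\Pi_2\bigl(\Sigma(\mathfrak{s}(\mathcal{O}_{X/\mathcal{P}})_H\otimes\mathfrak{m}_R)\bigr)\;\cong\;\bicat\Pi_2\bigl(\Sigma(\mathfrak{g}_{\mathtt{DR}}(\mathcal{J}_{X/\mathcal{P}})_\omega\otimes\mathfrak{m}_R)\bigr),
\]
and composing the three displayed equivalences proves the theorem. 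Naturality in $R$ is inherited link by link: the first equivalence is natural by \cite{BGNT1}, the second by \cite{BGNT2}, and the third because $\Sigma$ and $\bicat\Pi_2$ are functors while the formality morphism does not involve $R$.

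The step I expect to be the main obstacle is Theorem~\ref{thm: hoch jet is schouten twist} itself, which is the technical heart of the paper: constructing the $L_\infty$ quasi-isomorphism between the $\omega$-twisted de Rham--Gerstenhaber complex of $\mathcal{P}$-jets and the $H$-twisted Schouten $L_\infty$-algebra of multivectors requires the fibrewise Kontsevich formality morphism in the form of \cite{DTT}, applied to jets and globalized over $X$ by means of the $\mathcal{P}$-Dolbeault resolution and the transverse distribution $\mathcal{Q}$, together with a verification that the twist by the closed $3$-form $\omega$ on the Hochschild side matches the ternary operation $\Phi(H)$ on the Schouten side. A secondary point needing care is the $\mathcal{O}_{X/\mathcal{P}}$-version of Theorem~1 of \cite{BGNT1} used in the first link. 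Once these are in place the remainder is the homotopy-theoretic bookkeeping above, whose only delicate points --- exactness of $(-)\otimes\mathfrak{m}_R$ on $L_\infty$ quasi-isomorphisms and homotopy invariance of $\bicat\Pi_2\circ\Sigma$ --- are standard.
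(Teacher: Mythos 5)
Your proposal is correct and follows essentially the same route as the paper: the paper likewise chains Theorem \ref{thm: Def is MC jets} (the jet-level description of $\Def(\mathcal{S})(R)$ via $\MC^2$ of $\mathfrak{g}_{\mathtt{DR}}(\mathcal{J})_{\overline{B}}\otimes\mathfrak{m}_R$), the comparison $\MC^2(\mathfrak{g})\cong\bicat\Pi_2(\Sigma(\mathfrak{g}))$ from Theorem 3.7 of \cite{BGNT2}, and the homotopy invariance of $\Sigma$ (Proposition 3.4 of \cite{BGNT2}) applied to the quasi-isomorphism of Theorem \ref{thm: hoch jet is schouten twist}. Your identification of Theorem \ref{thm: hoch jet is schouten twist} as the non-formal heart, and of the matching of the representative $\overline{B}$ with $H$ via $j^\infty$, agrees with how the paper organizes the argument.
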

In particular, when $\mathcal{P}=0$ we recover Theorem \ref{thm:1}.
The paper is organized as follows. Section \ref{s:formality} contains a short exposition of the proof of Kontsevich formality theorem given in \cite{DTT}. Section \ref{Calculus in the presence of distribution} contains a short review of differential calculus  and differential geometry of jets in the presence of an integrable distribution. In Section \ref{for}, the proof from \cite{DTT} is modified to the twisted case. Section \ref{structures on multivectors} is devoted to constructions of $L_\infty$-structures on the algebra of multi-vectors and related algebras. In particular, we construct a morphism of DGLA from the shifted de Rham complex (equipped with the trivial bracket) to the deformation complex of the Schouten Lie algebra of multi-vectors (see Lemma \ref{de Rham to Chevalley}) which may be of independent interest. Finally, in  Section \ref{defalgstack} we prove the main results on  deformations of algebroid stacks.

\section{Formality}\label{s:formality}
This section contains a synopsis of results of the paper \cite{DTT} in the notation of loc. cit. Let $k$ be a field of characteristic zero. For a $k$-cooperad $\mathcal{C}$ and a complex of $k$-vector spaces $V$ we denote by $\mathbb{F}_\mathcal{C}(V)$ the cofree $\mathcal{C}$-coalgebra on
$V$.

We denote by $\mathbf{e_2}$ the operad governing Gerstenhaber algebras. The operad $\mathbf{e_2}$ is Koszul, and we denote by $\dual{\mathbf{e_2}}$ the dual cooperad (see \eqref{defn Fe2} for an explicit description of $\mathbb{F}_{\dual{\mathbf{e_2}}}$).

\subsection{Hochschild cochains}
For an associative $k$-algebra $A$, $p = 0, 1, 2, \ldots$ the space $C^p(A) = C^p(A;A)$ of Hochschild cochains of degree $p$ with values in (the $A\otimes_k A^\op$-module) $A$ is defined by $C^0(A) = A$ and $C^p(A) = \Hom_k(A^{\otimes_k p}, A)$ for $p \geq 1$. Let $\mathfrak{g}^p(A) = C^{p+1}(A)$. There is a canonical isomorphism of graded $k$-modules $\mathfrak{g}(A) = \Der_k(\mathrm{coAss}(A[1]))$, where $\mathrm{coAss}(V)$ denotes the co-free co-associative co-algebra on the graded vector space $V$. In particular, $\mathfrak{g}(A) = C(A)[1]$ has a canonical structure of a graded Lie algebra under the induced \emph{Gerstenhaber bracket}. Let $m \in C^2(A)$ denote the multiplication on $A$ and let $\delta$ denote the adjoint action of $m$ with respect to the Gerstenhaber bracket. Thus, $\delta$ is an endomorphism of degree one and a derivation of the Gerstenhaber bracket. Associativity of $m$ implies that $\delta\circ\delta = 0$. The graded space $C^\bullet(A)$ equipped with the \emph{Hochschild differential} $\delta$ is called the Hochschild complex of $A$. The graded space $\mathfrak{g}(A) = C^\bullet(A)[1]$ equipped with the Gerstenhaber bracket and the Hochschild differential is a DGLA which controls the deformation theory of $A$.

\subsection{Outline of the proof of formality for cochains}\label{review of DTT}
For an associative $k$-algebra $A$ the complex of Hochschild cochains $C^\bullet(A)$ has a canonical structure of a brace algebra. By \cite{MS}, \cite{T}, and \cite{KS1}, $C^\bullet(A)$ has a structure of a homotopy algebra over the operad of chain complexes of the topological operad of little discs. By \cite{T2} and \cite{K2}, the chain operad of little discs is formal, that is to say weakly equivalent to the Gerstenhaber operad $\mathbf{e_2}$. Therefore, the Hochschild cochain complex has a structure of a homotopy $\mathbf{e_2}$-algebra. The construction of \cite{T2} depends on a choice of a Drinfeld associator, while the construction of \cite{K2} does not. It has been shown in \cite{SWill} that the latter construction is a particular case of the former, corresponding to a special choice of associator.

The homotopy $\mathbf{e_2}$-algebra described above is encoded in a differential (i.e. a coderivation of degree one and square zero) $M \colon  \mathbb{F}_{\dual{\mathbf{e_2}}} (C^\bullet(A)) \to \mathbb{F}_{\dual{\mathbf{e_2}}} (C^\bullet(A))[1]$.

\medskip

\noindent
\emph{We assume from now on that $A$ is a \emph{regular} commutative algebra over a field $k$ of characteristic zero} (the regularity assumption is not needed for the constructions).

\medskip

Let $V^\bullet(A) = \Symm^\bullet_A(\Der(A)[-1])$ viewed as a complex with trivial differential. In this capacity, $V^\bullet(A)$ has a canonical structure of an $\mathbf{e_2}$-algebra which gives rise to the differential $d_{V^\bullet(A)}$ on $\mathbb{F}_{\dual{\mathbf{e_2}}}(V^\bullet(A))$. In the notation of \cite{DTT}, Theorem 1, $\bc_{\dual{\mathbf{e_2}}} (V^\bullet(A)) = (\mathbb{F}_{\dual{\mathbf{e_2}}} (V^\bullet(A)), d_{V^\bullet(A)})$.

In addition, the authors introduce a sub-$\dual{\mathbf{e_2}}$-coalgebra $\Xi(A)$ of both
$\mathbb{F}_{\dual{\mathbf{e_2}}}(C^\bullet(A))$ and
$\mathbb{F}_{\dual{\mathbf{e_2}}} (V^\bullet(A))$. We denote by $\sigma \colon
\Xi(A) \to \mathbb{F}_{\dual{\mathbf{e_2}}}(C^\bullet(A))$ and $\iota \colon \Xi(A) \to \mathbb{F}_{\dual{\mathbf{e_2}}}(V^\bullet(A))$ respective inclusions and identify $\Xi(A)$ with its image under $\iota$. By \cite{DTT}, Proposition 7 the differential $d_{V^\bullet(A)}$ preserves $\Xi(A)$; we denote by $d_{V^\bullet(A)}$ its restriction to $\Xi(A)$. By Theorem 3, loc. cit. the inclusion $\sigma$ is a morphism of complexes. Hence, we have the following diagram in the category of differential graded
$\dual{\mathbf{e_2}}$-coalgebras:
\begin{equation}\label{diag e2 coalg}
(\mathbb{F}_{\dual{\mathbf{e_2}}} (C^\bullet(A)), M) \xleftarrow{\sigma} (\Xi(A), d_{V^\bullet(A)}) \xrightarrow{\iota} \bc_{\dual{\mathbf{e_2}}} (V^\bullet(A))
\end{equation}

Applying the functor $\Omega_\mathbf{e_2}$ (adjoint to the functor
$\bc_{\dual{\mathbf{e_2}}}$, see \cite{DTT}, Theorem 1) to \eqref{diag e2 coalg} we
obtain the diagram
\begin{multline}\label{diag e2 alg}
\Omega_\mathbf{e_2}(\mathbb{F}_{\dual{\mathbf{e_2}}} (C^\bullet(A)), M) \xleftarrow{\Omega_\mathbf{e_2}(\sigma)} \\
\Omega_\mathbf{e_2}(\Xi(A), d_{V^\bullet(A)})
\xrightarrow{\Omega_\mathbf{e_2}(\iota)}
\Omega_\mathbf{e_2}(\bc_{\dual{\mathbf{e_2}}} (V^\bullet(A)))
\end{multline}
of differential graded $\mathbf{e_2}$-algebras. Let $\nu =
\eta_\mathbf{e_2}\circ\Omega_\mathbf{e_2}(\iota)$, where $\eta_\mathbf{e_2} :
\Omega_\mathbf{e_2}(\bc_{\dual{\mathbf{e_2}}} (V^\bullet(A))) \to V^\bullet(A)$ is
the counit of adjunction. Thus, we have the diagram
\begin{equation}\label{diag e2 alg counit}
\Omega_\mathbf{e_2}(\mathbb{F}_{\dual{\mathbf{e_2}}} (C^\bullet(A)), M) \xleftarrow{\Omega_\mathbf{e_2}(\sigma)} \Omega_\mathbf{e_2}(\Xi(A), d_{V^\bullet(A)}) \xrightarrow{\nu} V^\bullet(A)
\end{equation}
of differential graded $\mathbf{e_2}$-algebras.

\begin{thm}[\cite{DTT}, Theorem 4]\label{thm DTT}
Suppose that $A$ is a regular commutative algebra over a field $k$ of characteristic zero. Then, the maps $\Omega_\mathbf{e_2}(\sigma)$ and $\nu$ in the diagram \eqref{diag e2 alg counit} are quasi-isomorphisms.
\end{thm}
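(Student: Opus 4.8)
\textit{Proof proposal.} The statement is Theorem 4 of \cite{DTT} reproduced verbatim, so the natural plan is to reconstruct the argument of loc. cit., and the work splits cleanly into two independent quasi-isomorphism claims: that $\Omega_{\mathbf{e_2}}(\sigma)$ is a quasi-isomorphism, and that $\nu$ is a quasi-isomorphism. For both, the guiding principle is that $\Omega_{\mathbf{e_2}}$ (being left adjoint to $\bc_{\dual{\mathbf{e_2}}}$ and a bar–cobar type construction) sends quasi-isomorphisms of $\dual{\mathbf{e_2}}$-coalgebras that are, in addition, ``good'' in an appropriate filtered sense (e.g. weak equivalences of conilpotent coalgebras, or maps inducing iso on the underlying complexes) to quasi-isomorphisms of $\mathbf{e_2}$-algebras. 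So first I would reduce each claim, via this Koszulity/adjunction machinery for the Koszul operad $\mathbf{e_2}$, to a statement about the underlying \emph{complexes} of $\dual{\mathbf{e_2}}$-coalgebras in diagram \eqref{diag e2 coalg}.

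For the map $\nu$, recall $\nu = \eta_{\mathbf{e_2}}\circ\Omega_{\mathbf{e_2}}(\iota)$ with $\eta_{\mathbf{e_2}}\colon \Omega_{\mathbf{e_2}}(\bc_{\dual{\mathbf{e_2}}}(V^\bullet(A)))\to V^\bullet(A)$ the counit. The counit of the bar–cobar adjunction for a Koszul operad is a quasi-isomorphism on cofibrant (here: cofree, which $\bc_{\dual{\mathbf{e_2}}}(V^\bullet(A))$ is) coalgebras — this is precisely the Koszulity of $\mathbf{e_2}$, proven in \cite{T2}, \cite{K2}. Thus it remains to show $\Omega_{\mathbf{e_2}}(\iota)$ is a quasi-isomorphism, i.e. that $\iota\colon (\Xi(A), d_{V^\bullet(A)})\hookrightarrow \bc_{\dual{\mathbf{e_2}}}(V^\bullet(A))$ is a weak equivalence of $\dual{\mathbf{e_2}}$-coalgebras. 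This is where the regularity of $A$ and the explicit description of $\Xi(A)$ (as the sub-coalgebra built from the Hochschild–Kostant–Rosenberg-type inclusion of $V^\bullet(A)$ into $C^\bullet(A)$) enter: one filters both sides by, say, polynomial degree in $\Der(A)$ or by the arity/weight grading coming from the cofree $\dual{\mathbf{e_2}}$-coalgebra structure, and computes on the associated graded, where the differentials become the HKR differential, which is acyclic in positive degrees by the classical HKR theorem for the regular algebra $A$. A spectral sequence / exhaustive filtration argument then upgrades this to the claim for $\iota$.

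For the map $\Omega_{\mathbf{e_2}}(\sigma)$, by the same adjunction principle it suffices to show that $\sigma\colon (\Xi(A), d_{V^\bullet(A)})\to (\mathbb{F}_{\dual{\mathbf{e_2}}}(C^\bullet(A)), M)$ is a quasi-isomorphism of complexes of $\dual{\mathbf{e_2}}$-coalgebras. Here the point is that $M$ is the differential encoding the full homotopy $\mathbf{e_2}$-structure on Hochschild cochains, while $d_{V^\bullet(A)}$ on $\Xi(A)$ is its restriction; one must show $\sigma$ is compatible with differentials (this is Theorem 3 of \cite{DTT}, already invoked in the excerpt) and then that it is a quasi-isomorphism. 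Again I would filter $\mathbb{F}_{\dual{\mathbf{e_2}}}(C^\bullet(A))$ so that the leading term of $M$ is the Hochschild differential $\delta$ applied componentwise; on the associated graded the cohomology of $C^\bullet(A)$ is $V^\bullet(A)$ by HKR, and one checks that $\Xi(A)$ maps isomorphically onto the resulting cohomology of the cofree coalgebra — i.e. $\mathbb{F}_{\dual{\mathbf{e_2}}}$ applied to an HKR quasi-isomorphism, together with the fact that cofree coalgebra functors on complexes preserve quasi-isomorphisms in this setting, identifies the $E_1$-page with $\mathbb{F}_{\dual{\mathbf{e_2}}}(V^\bullet(A))$, and $\Xi(A)$ is designed to be exactly a model for this. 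A convergence argument for the filtration finishes the proof.

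The main obstacle is the bookkeeping in the $\Omega_{\mathbf{e_2}}(\sigma)$ half: one cannot simply say ``$M$ has leading term $\delta$ and everything else is higher filtration,'' because $M$ also contains the cup product term of Hochschild degree zero, so the filtration must be chosen with care (the relevant one in \cite{DTT} is by the number of Hochschild cochain inputs, not by total degree), and one must verify that $\sigma$ is strictly filtered and that the spectral sequences converge — the cochain complexes are unbounded, so convergence is not automatic and requires that the filtration be exhaustive and complete, or that one works within an appropriate (pro-)nilpotent completed setting. Controlling this, and checking that the associated-graded identification really is the cofree coalgebra on the HKR cohomology rather than something larger, is the technical heart; everything else is a formal consequence of Koszulity of $\mathbf{e_2}$ and the HKR theorem.
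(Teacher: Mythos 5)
This statement is imported verbatim from \cite{DTT} (Theorem 4 of loc.\ cit.); the paper under review offers no proof of its own, remarking only (end of Section 2) that the argument rests on the flatness of $\Der(A)$ and the Hochschild--Kostant--Rosenberg theorem. Your reconstruction --- reduce via the cobar adjunction to showing $\sigma$ and $\iota$ are weak equivalences of conilpotent $\dual{\mathbf{e_2}}$-coalgebras, handle the counit by Koszulity of $\mathbf{e_2}$, and compute the associated graded of a carefully chosen (arity-type, complete and exhaustive) filtration using HKR --- is exactly the strategy of \cite{DTT}, and your flagged caveats (cup-product term in $M$, convergence, $\Omega_{\mathbf{e_2}}$ not preserving arbitrary quasi-isomorphisms of coalgebras) are the right ones. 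One small correction: the Koszulity of $\mathbf{e_2}$ is a classical fact (Getzler--Jones) and is not what \cite{T2} and \cite{K2} prove; those references establish formality of the chain operad of little discs, which enters only in producing the differential $M$, i.e.\ the homotopy $\mathbf{e_2}$-structure on $C^\bullet(A)$.
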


Additionally, concerning the DGLA structures relevant to applications to deformation theory,
deduced from respective $\mathbf{e_2}$-algebra structures we have the following result.

\begin{thm}[\cite{DTT}, Theorem 2]\label{e2 to dgla}
The DGLA $\Omega_\mathbf{e_2}(\mathbb{F}_{\dual{\mathbf{e_2}}} (C^\bullet(A)),
M)[1]$ and $C^\bullet(A)[1]$ are canonically $L_\infty$-quasi-isomorphic.
\end{thm}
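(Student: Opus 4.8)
The plan is to isolate, throughout the constructions recalled in \S\ref{review of DTT}, the ``Lie part'' of all the $\mathbf{e_2}$-structures involved and to play it against the rectification adjunction $\Omega_\mathbf{e_2}\dashv\bc_{\dual{\mathbf{e_2}}}$ of \cite{DTT}, Theorem~1. Killing the commutative product gives a surjection of operads $\mathbf{e_2}\twoheadrightarrow\mathbf{Lie}\{1\}$ onto the operadic suspension of the Lie operad, whose algebras are DGLAs after a shift by one; restriction along it turns a genuine DG $\mathbf{e_2}$-algebra $B$ into the genuine DGLA $B[1]$, so in particular $\Omega_\mathbf{e_2}(\mathbb{F}_{\dual{\mathbf{e_2}}}(C^\bullet(A)),M)[1]$ is a DGLA, as the statement presupposes. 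Koszul-dually there is a sub-cooperad inclusion $\dual{\mathbf{Lie}\{1\}}\hookrightarrow\dual{\mathbf{e_2}}$. Given a homotopy $\mathbf{e_2}$-algebra $(V,M_V)$, i.e.\ a codifferential $M_V$ on $\mathbb{F}_{\dual{\mathbf{e_2}}}(V)$, the operations of $M_V$ indexed by the cogenerators lying in $\dual{\mathbf{Lie}\{1\}}$ --- the totally antisymmetric ones --- assemble into an $L_\infty$-algebra structure on $V[1]$ with $\ell_1$ the underlying differential and $\ell_2$ the binary antisymmetric operation; write $\mathfrak{L}(V,M_V)$ for this $L_\infty$-algebra. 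The theorem then follows from two comparisons: (A) $\Omega_\mathbf{e_2}(\mathbb{F}_{\dual{\mathbf{e_2}}}(C^\bullet(A)),M)[1]\simeq_{L_\infty}\mathfrak{L}(C^\bullet(A),M)$, and (B) $\mathfrak{L}(C^\bullet(A),M)\simeq_{L_\infty}(C^\bullet(A)[1],\delta,[\,,\,])$.

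For (A) I would show that rectification commutes, up to natural $L_\infty$-quasi-isomorphism, with the passage to the underlying $L_\infty$-algebra. By \cite{DTT}, Theorem~1 the unit of $\Omega_\mathbf{e_2}\dashv\bc_{\dual{\mathbf{e_2}}}$ is a quasi-isomorphism of DG $\dual{\mathbf{e_2}}$-coalgebras $\mathbb{F}_{\dual{\mathbf{e_2}}}(V)\xrightarrow{\ \sim\ }\bc_{\dual{\mathbf{e_2}}}\bigl(\Omega_\mathbf{e_2}(\mathbb{F}_{\dual{\mathbf{e_2}}}(V),M_V)\bigr)$, exhibiting $(V,M_V)$ and $\Omega_\mathbf{e_2}(\mathbb{F}_{\dual{\mathbf{e_2}}}(V),M_V)$ as $\infty$-quasi-isomorphic homotopy $\mathbf{e_2}$-algebras. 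The bar construction is functorial along $\dual{\mathbf{Lie}\{1\}}\hookrightarrow\dual{\mathbf{e_2}}$: for a DG $\mathbf{e_2}$-algebra $B$ the Chevalley--Eilenberg coalgebra $\bc_{\dual{\mathbf{Lie}\{1\}}}(B[1])$ maps naturally, via the inclusion of cogenerators, to $\bc_{\dual{\mathbf{e_2}}}(B)$, and this morphism records exactly the underlying DGLA $B[1]$. Transporting the unit quasi-isomorphism through this compatibility, and recalling that a quasi-isomorphism of Chevalley--Eilenberg coalgebras is precisely an $L_\infty$-quasi-isomorphism of the corresponding $L_\infty$-algebras, produces the natural $L_\infty$-quasi-isomorphism $\Omega_\mathbf{e_2}(\mathbb{F}_{\dual{\mathbf{e_2}}}(V),M_V)[1]\simeq\mathfrak{L}(V,M_V)$; take $V=C^\bullet(A)$.

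For (B) one must invoke the origin of $M$. It is obtained by transporting, through the chain of quasi-isomorphisms of operads $\mathbf{e_2}\xleftarrow{\ \sim\ }(\cdots)\xrightarrow{\ \sim\ }C_\bullet(\mathcal{D}_2)$ of \cite{T2}, \cite{K2}, the action on $C^\bullet(A)$ of a chain model of the little discs operad supplied by \cite{MS}, \cite{T}, \cite{KS1}; that action extends the brace-algebra structure of $C^\bullet(A)$, whose underlying once-shifted Lie algebra, with bracket the Gerstenhaber bracket and differential the Hochschild differential $\delta$, is the DGLA $C^\bullet(A)[1]$. Since the assignment of the underlying $L_\infty$-algebra to a homotopy $\mathbf{e_2}$-algebra is homotopy invariant --- a quasi-isomorphism of the acting operads induces an $L_\infty$-quasi-isomorphism of underlying $L_\infty$-algebras --- we get $\mathfrak{L}(C^\bullet(A),M)\simeq_{L_\infty}(C^\bullet(A)[1],\delta,[\,,\,])$, naturally in $A$. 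Composing with the comparison of step (A) proves the theorem.

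I expect the main obstacle to be step (A), and within it the homotopical (not merely categorical) content of \cite{DTT}, Theorem~1: one needs the unit of the $\Omega_\mathbf{e_2}\dashv\bc_{\dual{\mathbf{e_2}}}$ adjunction to be a genuine quasi-isomorphism on cofree $\dual{\mathbf{e_2}}$-coalgebras with codifferential, and one must set up the compatibility of the $\mathbf{e_2}$- and $\mathbf{Lie}\{1\}$-bar--cobar adjunctions with the projection $\mathbf{e_2}\twoheadrightarrow\mathbf{Lie}\{1\}$ at the level of the defining twisting morphisms, so that the comparison map above is simultaneously a coalgebra morphism and a chain map. Step (B), by contrast, is essentially bookkeeping with quasi-isomorphisms of operads, once the compatibility of the chosen homotopy $\mathbf{e_2}$-structure with the brace structure --- proved in the works cited there --- is taken as input.
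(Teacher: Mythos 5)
First, note that the paper does not actually prove this statement: Theorem \ref{e2 to dgla} is imported verbatim from \cite{DTT} (Theorem 2 of loc.\ cit.), so your proposal has to be measured against the argument there rather than against anything in the present text. Your two-step architecture --- (A) identify the rectified DGLA $\Omega_\mathbf{e_2}(\mathbb{F}_{\dual{\mathbf{e_2}}}(C^\bullet(A)),M)[1]$ with the underlying $L_\infty$-algebra $\mathfrak{L}(C^\bullet(A),M)$ of the homotopy $\mathbf{e_2}$-structure, then (B) identify the latter with the Hochschild DGLA --- is indeed the shape of the standard argument, and step (A) is essentially correct Koszul-duality bookkeeping (one small slip: the comparison map goes $\bc_{\dual{\mathbf{e_2}}}(B) \to \bc_{\dual{\mathbf{Lie}\{1\}}}(B[1])$, i.e.\ it is the \emph{projection} $\coComm(\coLie(B[1])[1]) \to \coComm(B[2])$ induced by the cogenerator projection $\coLie(W[1]) \to W[1]$ exactly as in \eqref{morphism: Fe2 to Fcocomm}, not an inclusion of cogenerators as you wrote; the variance matters when you check that codifferentials and morphisms descend).

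The genuine gap is in step (B), which you dismiss as ``essentially bookkeeping'' --- this inverts where the difficulty lies. The DGLA $(C^\bullet(A)[1],\delta,[\ ,\ ])$ is \emph{not} the image of the brace/little-discs structure under any tautological ``underlying $L_\infty$'' functor, because that functor is only defined after the structure has been transported along the formality zig-zag to a codifferential $M$ on $\mathbb{F}_{\dual{\mathbf{e_2}}}(C^\bullet(A))$; and the transported structure has $\ell_2$ equal to the Gerstenhaber bracket only up to homotopy and generically nonzero $\ell_n$ for $n\ge 3$. What is actually needed is that the two maps from the $L_\infty$-resolution of $\mathbf{Lie}\{1\}$ into the braces operad --- the one factoring through $\Omega_\mathbf{e_2}(\dual{\mathbf{e_2}})$ via the chosen formality morphism of \cite{T2},\cite{K2}, and the strict one sending the bracket to the antisymmetrized brace $x\{y\}\mp y\{x\}$ --- are homotopic through maps inducing the identity on cohomology. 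That is a theorem with content (it must be checked for the given associator-dependent construction, e.g.\ by an obstruction-theoretic or explicit-homotopy argument), not a formal consequence of ``homotopy invariance of the underlying $L_\infty$-algebra.'' Your phrase ``a quasi-isomorphism of the acting operads induces an $L_\infty$-quasi-isomorphism of underlying $L_\infty$-algebras'' silently assumes this compatibility, which is precisely what has to be proved; without it the conclusion $\mathfrak{L}(C^\bullet(A),M)\simeq_{L_\infty}(C^\bullet(A)[1],\delta,[\ ,\ ])$ does not follow.
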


\begin{cor}[Formality]\label{formality DTT}
The DGLA $C^\bullet(A)[1]$ and $V^\bullet(A)[1]$ are $L_\infty$-quasi-isomorphic.
\end{cor}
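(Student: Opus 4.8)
The plan is to assemble a zig-zag of quasi-isomorphisms out of the diagram \eqref{diag e2 alg counit} together with Theorem \ref{e2 to dgla}, and then to collapse that zig-zag. The key structural observation is that the operad $\mathbf{e_2}$ contains (a shift of) the Lie operad, so a morphism of $\mathbf{e_2}$-algebras is in particular a morphism of the underlying graded Lie algebras obtained after the shift $[1]$. Applying $[1]$ to \eqref{diag e2 alg counit} therefore produces a diagram
\[
\Omega_\mathbf{e_2}(\mathbb{F}_{\dual{\mathbf{e_2}}}(C^\bullet(A)),M)[1] \xleftarrow{\Omega_\mathbf{e_2}(\sigma)[1]} \Omega_\mathbf{e_2}(\Xi(A),d_{V^\bullet(A)})[1] \xrightarrow{\nu[1]} V^\bullet(A)[1]
\]
of honest DGLAs and strict DGLA morphisms, in which the bracket on $V^\bullet(A)[1] = \Symm^\bullet_A(\Der(A)[-1])[1]$ induced from its $\mathbf{e_2}$-structure is the Schouten bracket. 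By Theorem \ref{thm DTT} both arrows are quasi-isomorphisms of complexes, hence quasi-isomorphisms of DGLAs.

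Next I would splice onto the left end the $L_\infty$-quasi-isomorphism furnished by Theorem \ref{e2 to dgla}, which links $\Omega_\mathbf{e_2}(\mathbb{F}_{\dual{\mathbf{e_2}}}(C^\bullet(A)),M)[1]$ with $C^\bullet(A)[1]$. This yields a chain of $L_\infty$-quasi-isomorphisms
\[
C^\bullet(A)[1] \;\xrightarrow{\ \sim\ }\; \Omega_\mathbf{e_2}(\mathbb{F}_{\dual{\mathbf{e_2}}}(C^\bullet(A)),M)[1] \;\xleftarrow{\ \sim\ }\; \Omega_\mathbf{e_2}(\Xi(A),d_{V^\bullet(A)})[1] \;\xrightarrow{\ \sim\ }\; V^\bullet(A)[1]
\]
connecting $C^\bullet(A)[1]$ and $V^\bullet(A)[1]$, the first arrow being an $L_\infty$-morphism and the remaining two strict DGLA morphisms.

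To finish I would invoke the standard fact that, over a field of characteristic zero, an $L_\infty$-quasi-isomorphism admits a quasi-inverse $L_\infty$-quasi-isomorphism (this is implicit in the homotopical algebra of $L_\infty$-algebras developed in \cite{H1}; concretely one can construct an inverse by homotopy transfer). Applying this to the middle (backward) arrow and composing with the other two arrows produces a single $L_\infty$-quasi-isomorphism $C^\bullet(A)[1] \to V^\bullet(A)[1]$, which is the assertion of the Corollary.

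The main point requiring care — more a matter of bookkeeping than a genuine obstacle — is to keep straight, at each node of the zig-zag, which structure is being transported: one must confirm that the shifted-Lie part of the $\mathbf{e_2}$-structure on $V^\bullet(A)$ arriving through the adjunction $\Omega_\mathbf{e_2} \dashv \bc_{\dual{\mathbf{e_2}}}$ and the counit $\eta_{\mathbf{e_2}}$ genuinely recovers the Schouten bracket, and that the DGLA structure on $\Omega_\mathbf{e_2}(\mathbb{F}_{\dual{\mathbf{e_2}}}(C^\bullet(A)),M)[1]$ used in Theorem \ref{e2 to dgla} is the same one appearing above; both are essentially definitional given the results quoted from \cite{DTT}. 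The regularity of $A$ enters only through Theorem \ref{thm DTT}, ensuring the two arrows of \eqref{diag e2 alg counit} are quasi-isomorphisms.
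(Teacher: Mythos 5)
Your argument is exactly the one the paper leaves implicit: shift the zig-zag \eqref{diag e2 alg counit} by $[1]$ to get a span of DGLA quasi-isomorphisms (using Theorem \ref{thm DTT}), splice on the $L_\infty$-quasi-isomorphism of Theorem \ref{e2 to dgla}, and invert the backward arrow using the standard invertibility of $L_\infty$-quasi-isomorphisms over a field of characteristic zero. This is correct and matches the intended proof.
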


\subsection{Some (super-)symmetries}\label{subsection: symmetries}
For applications to deformation theory of algebroid stacks we will need certain equivariance properties of the maps described in \ref{review of DTT}.

For $a\in A$ let $i_a \colon C^\bullet(A) \to C^\bullet(A)[-1]$ denote the adjoint action (in
the sense of the Gerstenhaber bracket and the identification $A = C^0(A)$). It is given by
the formula
\[
i_a D(a_1, \ldots, a_n)=\sum _{i=0}^n (-1)^k D(a_1, \ldots, a_i, a, a_{k+1}, \ldots, a_n) .
\]
The operation $i_a$ extends uniquely to a coderivation of
$\mathbb{F}_{\dual{\mathbf{e_2}}} (C^\bullet(A))$; we denote this extension by $i_a$
as well. Furthermore, the subcoalgebra $\Xi (A)$ is preserved by $i_a$.

The operation $i_a$ is a derivation of the cup product as well as of all of the brace
operations on $C^\bullet(A)$ and the homotopy-$\mathbf{e_2}$-algebra structure on
$C^\bullet(A)$ is given in terms of the cup product and the brace operations. Therefore, $i_a$ anti-commutes with the differential $M$. Hence, the coderivation $i_a$ induces a
derivation of the differential graded $\mathbf{e_2}$-algebra
$\Omega_\mathbf{e_2}(\mathbb{F}_{\dual{\mathbf{e_2}}} (C^\bullet(A)), M)$ which
will be denoted by $i_a$ as well. For the same reason the DGLA
$\Omega_\mathbf{e_2}(\mathbb{F}_{\dual{\mathbf{e_2}}} (C^\bullet(A)), M)[1]$ and
$C^\bullet(A)[1]$ are quasi-isomorphic in a way which commutes with the respective
operations $i_a$.

On the other hand, let $i_a \colon V^\bullet (A) \to V^\bullet(A)[-1]$ denote the adjoint action of $a$ in the sense of the Schouten bracket and the identification $A = V^0(A)$. The operation $i_a$ extends uniquely to a coderivation of
$\mathbb{F}_{\dual{\mathbf{e_2}}}(V^\bullet(A))$ which anticommutes with the
differential $d_{V^\bullet(A)}$ because $i_a$ is a derivation of the $\mathbf{e_2}$-algebra
structure on $V^\bullet(A)$. We denote this coderivation as well as its unique extension to a
derivation of the differential graded $\mathbf{e_2}$-algebra
$\Omega_\mathbf{e_2}(\bc_{\dual{\mathbf{e_2}}} (V^\bullet(A)))$ by $i_a$. The
counit map $\eta_\mathbf{e_2} \colon
\Omega_\mathbf{e_2}(\bc_{\dual{\mathbf{e_2}}} (V^\bullet(A))) \to V^\bullet(A)$
commutes with respective operations $i_a$.

The subcoalgebra $\Xi(A)$ of $\mathbb{F}_{\dual{\mathbf{e_2}}}(C^\bullet(A))$ and
$\mathbb{F}_{\dual{\mathbf{e_2}}}(V^\bullet(A))$ is preserved by the respective
operations $i_a$. Moreover, the restrictions of the two operations to $\Xi(A)$ coincide, i.e.
the maps in \eqref{diag e2 coalg} commute with $i_a$ and, therefore, so do the maps in
\eqref{diag e2 alg} and \eqref{diag e2 alg counit}.

\subsection{Extensions and generalizations}
Constructions and results of \cite{DTT} apply in a variety of situations. First of all, observe that constructions of all objects and morphisms involved can be carried out in any closed symmetric monoidal category such as, for example, the category of sheaves of $k$-modules, $k$ a sheaf of commutative algebras (over a field of characteristic zero). As is pointed out in \cite{DTT}, Section 4, the proof of Theorem \ref{thm DTT} is based on the flatness of the module $\Der(A)$ and the Hochschild-Kostant-Rosenberg theorem.

The considerations above apply to a sheaf of $k$-algebras $\mathcal{K}$ yielding the \emph{sheaf of Hochschild cochains} $C^p(\mathcal{K})$, the Hochschild complex (of sheaves)
$C^\bullet(\mathcal{K})$ and the (sheaf of) DGLA $\mathfrak{g}(\mathcal{K})$.

If $X$ is a $C^\infty$ manifold the sheaf $C^p(\mathcal{O}_X)$ coincides
with the sheaf of multilinear differential operators $\mathcal{O}_X^{\times p} \to
\mathcal{O}_X$ by the theorem of J.~Peetre \cite{Peet}, \cite{Peet1}.

\subsection{Deformations of $\mathcal{O}$ and Kontsevich formality}
Suppose that $X$ is a $C^\infty$ manifold. Let $\mathcal{O}_X$ (respectively, $\mathcal{T}_X$)
denote the structure sheaf (respectively, the sheaf of vector fields). The construction of
\ref{review of DTT} yield the diagram of sheaves of differential graded
$\mathbf{e_2}$-algebras
\begin{equation}\label{diag e2 alg counit sheaves}
\Omega_\mathbf{e_2}(\mathbb{F}_{\dual{\mathbf{e_2}}} (C^\bullet(\mathcal{O}_X)), M) \xleftarrow{\Omega_\mathbf{e_2}(\sigma)} \Omega_\mathbf{e_2}(\Xi(\mathcal{O}_X), d_{V^\bullet(\mathcal{O}_X)}) \xrightarrow{\nu} V^\bullet(\mathcal{O}_X) ,
\end{equation}
where $C^\bullet(\mathcal{O}_X)$ denotes the sheaf of multi-differential operators and
$V^\bullet(\mathcal{O}_X) := \Symm^\bullet_{\mathcal{O}_X}(\mathcal{T}_X[-1])$
denotes the sheaf of multi-vector fields. Theorem \ref{thm DTT} says that the morphisms
$\Omega_\mathbf{e_2}(\sigma)$ and $\nu$ in \eqref{diag e2 alg counit sheaves} are
quasi-isomorphisms of sheaves of differential graded $\mathbf{e_2}$-algebras.

\section{Calculus in the presence of distribution}\label{Calculus in the presence of distribution}
In this section we briefly review basic facts regarding differential calculus in the presence of an integrable complex distribution. We refer the reader to \cite{Kostant}, \cite{Rawnsley} and \cite{FW} for details and proofs.

For a $C^\infty$ manifold $X$ we denote by $\mathcal{O}_X$ (respectively, $\Omega^i_X$) the sheaf of \emph{complex valued} $C^\infty$ functions (respectively, differential forms of degree $i$) on $X$. Throughout this section we denote by $\mathcal{T}_X^\mathbb{R}$ the sheaf of \emph{real valued} vector fields on $X$. Let $\mathcal{T}_X := \mathcal{T}_X^\mathbb{R}\otimes_\mathbb{R}\mathbb{C}$.

\subsection{Complex distributions}
A \emph{(complex) distribution} on $X$ is a sub-bundle\footnote{A
sub-bundle is an $\mathcal{O}_X$-submodule which is a direct
summand locally on $X$} of ${\mathcal T}_X$.

A distribution $\mathcal{P}$ is called \emph{involutive} if it is closed under the Lie bracket, i.e. $[\mathcal{P},\mathcal{P}] \subseteq \mathcal{P}$.

For a distribution $\mathcal{P}$ on $X$ we denote by $\mathcal{P}^\perp \subseteq \Omega^1_X$ the annihilator of $\mathcal{P}$ (with respect to the canonical duality pairing).

A distribution $\mathcal{P}$ of rank $r$ on $X$ is called \emph{integrable} if, locally on $X$, there exist functions $f_1,\ldots , f_r\in\mathcal{O}_X$ such that $df_1,\ldots , df_r$ form a local frame for $\mathcal{P}^\perp$.

It is easy to see that an integrable distribution is involutive. The converse is true when $\mathcal{P}$ is \emph{real}, i.e. $\overline{\mathcal{P}} = \mathcal{P}$ (Frobenius) and when $\mathcal{P}$ is a \emph{complex structure}, i.e. $\overline{\mathcal{P}} \cap \mathcal{P} =0$ and $\overline{\mathcal{P}} \oplus \mathcal{P}
= \mathcal{T}_X$ (Newlander-Nirenberg). More generally, according to Theorem 1 of \cite{Rawnsley}, a sufficient condition for integrability of a complex distribution $\mathcal{P}$ is
\begin{equation}\label{sufficient condition}
\text{$\mathcal{P}\cap\overline{\mathcal{P}}$ is a sub-bundle and both $\mathcal{P}$ and $\mathcal{P} + \overline{\mathcal{P}}$ are involutive.}
\end{equation}

\subsection{The Hodge filtration}\label{subsection: the hodge filtration}
Suppose that $\mathcal{P}$ is an involutive distribution on $X$.

Let $F_\bullet\Omega^\bullet_X$ denote the filtration by the powers of the differential ideal generated by $\mathcal{P}^\perp$, i.e. $F_{-i}\Omega^j_X = \bigwedge^i{\mathcal
P}^\perp\wedge\Omega^{j-i}_X\subseteq\Omega^j_X$. Let $\dbar$ denote the differential in $Gr^F_\bullet\Omega^\bullet_X$. The wedge product of
differential forms induces a structure of a commutative DGA on
$(Gr^F_\bullet\Omega^\bullet_X,\dbar)$.

In particular, $Gr^F_0\mathcal{O}_X = \mathcal{O}_X$, $Gr^F_0\Omega^1_X = \Omega^1_X/\mathcal{P}^\perp$ and $\dbar\colon \mathcal{O}_X \to Gr^F_0\Omega^1_X$ is equal to the composition $\mathcal{O}_X \xrightarrow{d} \Omega^1_X \to \Omega^1_X/\mathcal{P}^\perp$. Let $\mathcal{O}_{X/\mathcal{P}} := \ker(\mathcal{O}_X \xrightarrow{\dbar} Gr^F_0\Omega^1_X)$; equivalently, $\mathcal{O}_{X/\mathcal{P}} = (\mathcal{O}_X)^\mathcal{P} \subset \mathcal{O}_X$, the subsheaf of functions constant along $\mathcal{P}$. Note that $\dbar$ is $\mathcal{O}_{X/\mathcal{P}}$-linear.

Theorem 2 of \cite{Rawnsley} says that, if $\mathcal{P}$ satisfies the condition \eqref{sufficient condition} higher $\dbar$-cohomology of $\mathcal{O}_X$ vanishes, i.e.
\begin{equation}\label{d-bar cohomology of functions}
H^i(Gr^F_0\Omega^\bullet_X,\dbar) = \left\{
\begin{array}{ll}
\mathcal{O}_{X/\mathcal{P}} & \text{if $i=0$} \\
0 & \text{otherwise.}
\end{array}
\right.
\end{equation}

\medskip

\noindent
In what follows we will assume that the complex distribution $\mathcal{P}$ under consideration is integrable and satisfies \eqref{d-bar cohomology of functions}. This is implied by the condition \eqref{sufficient condition}.

\subsection{$\dbar$-operators} Suppose that $\mathcal{E}$ is a vector bundle on $X$, i.e. a locally free $\mathcal{O}_X$-module of finite rank. A \emph{connection along $\mathcal{P}$} on
$\mathcal{E}$ is, by definition, a map $\nabla^\mathcal{P}:
\mathcal{E}\to\Omega^1_X/\mathcal{P}^\perp\otimes_{{\mathcal
O}_X}\mathcal{E}$ which satisfies the Leibniz rule
$\nabla^\mathcal{P}(fe)=f\nabla^\mathcal{P}(e)+\overline\partial
f\cdot e$. Equivalently, a connection along $\mathcal{P}$ is an
$\mathcal{O}_X$-linear map $\nabla^\mathcal{P}_{(\bullet)}\colon \mathcal{P} \to \shEnd_\mathbb{C}(\mathcal{E})$ which satisfies the Leibniz rule $\nabla^\mathcal{P}_\xi(fe)=
f\nabla^\mathcal{P}_\xi(e)+\overline\partial f\cdot e$. In particular, $\nabla^\mathcal{P}_\xi$ is $\mathcal{O}_{X/\mathcal{P}}$-linear. The two avatars of a connection along $\mathcal{P}$
are related by $\nabla^\mathcal{P}_\xi(e)=\iota_\xi\nabla^\mathcal{P}(e)$.

A connection along $\mathcal{P}$ on $\mathcal{E}$ is called \emph{flat} if the corresponding map $\nabla^\mathcal{P}_{(\bullet)}\colon \mathcal{P} \to \shEnd_\mathbb{C}(\mathcal{E})$ is a morphism of Lie algebras. We will refer to a flat connection along $\mathcal{P}$ on
$\mathcal{E}$ as a $\dbar$-operator on $\mathcal{E}$.

A connection on $\mathcal{E}$ along $\mathcal{P}$ extends uniquely to a derivation of the graded $Gr^F_0\Omega^\bullet_X$-module $Gr^F_0\Omega^\bullet_X\otimes_{\mathcal{O}_X}\mathcal{E}$. A $\dbar$-operator $\dbar_\mathcal{E}$ satisfies $\dbar_{\mathcal E}^2=0$. The complex $(Gr^F_0\Omega^\bullet_X\otimes_{{\mathcal O}_X}\mathcal{E}, \dbar_\mathcal{E})$ is referred to as the (corresponding) $\dbar$-complex. Since $\dbar_\mathcal{E}$ is $\mathcal{O}_{X/\mathcal{P}}$-linear, the sheaves $H^i(Gr^F_0\Omega^\bullet_X\otimes_{{\mathcal O}_X}{\mathcal E},\dbar_\mathcal{E})$ are $\mathcal{O}_{X/\mathcal{P}}$-modules. The vanishing of higher $\dbar$-cohomology of $\mathcal{O}_X$ \eqref{d-bar cohomology of functions} generalizes easily to vector bundles.

\begin{lemma}\label{dbar lemma}
Suppose that $\mathcal{E}$ is a vector bundle and $\dbar_\mathcal{E}$ is a $\dbar$-operator on $\mathcal{E}$. Then, $H^i(Gr^F_0\Omega^\bullet_X \otimes_{\mathcal{O}_X}\mathcal{E}, \dbar_\mathcal
{E})=0$ for $i\neq 0$, i.e. the $\dbar$-complex is a resolution of $\ker(\dbar_\mathcal{E})$. Moreover, $\ker(\dbar_\mathcal{E})$ is locally free over $\mathcal{O}_{X/\mathcal{P}}$ of rank $\rk_{\mathcal{O}_X}\mathcal{E}$ and the map $\mathcal{O}_X \otimes_{\mathcal{O}_{X/\mathcal{P}}} \ker(\dbar_\mathcal{E}) \to \mathcal{E}$ (the $\mathcal{O}_X$-linear extension of the
inclusion $\ker(\dbar_\mathcal{E}) \hookrightarrow \mathcal{E}$) is an isomorphism.
\end{lemma}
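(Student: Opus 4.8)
The plan is to reduce everything to the already-known scalar case \eqref{d-bar cohomology of functions} by a local argument plus a partition-of-unity patching, and then to bootstrap the cohomological vanishing into the structural statements about $\ker(\dbar_\mathcal{E})$.

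First I would work locally. Since $\mathcal{E}$ is locally free, on a small enough open set $U$ we may choose an $\mathcal{O}_X$-basis $e_1,\dots,e_r$ and write $\dbar_\mathcal{E} e_j = \sum_k \theta_j^k\, e_k$ for some $\theta_j^k \in \Gamma(U; Gr^F_0\Omega^1_X)$. The point is that, because \eqref{d-bar cohomology of functions} holds, the structure group can be reduced: I claim that after shrinking $U$ there is a change of frame $e'_i = \sum_j g_i^j e_j$ with $g \in GL_r(\mathcal{O}_X)$ such that $\dbar_\mathcal{E} e'_i = 0$. This is precisely the integrability/flatness statement: the obstruction to solving $\dbar g = -g\theta$ (a first-order $\dbar$-equation with values in matrices) is a $\dbar$-cohomology class which vanishes by \eqref{d-bar cohomology of functions} applied entrywise; flatness $\dbar_\mathcal{E}^2 = 0$ is exactly what makes this equation solvable order by order (or, more honestly, what makes the relevant $\dbar$-connection on the matrix bundle integrable so that a flat frame exists locally). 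Granting such a $\dbar$-flat local frame, the $\dbar$-complex $(Gr^F_0\Omega^\bullet_X \otimes_{\mathcal{O}_X}\mathcal{E}, \dbar_\mathcal{E})$ becomes isomorphic over $U$ to $r$ copies of $(Gr^F_0\Omega^\bullet_X, \dbar)$, so its higher cohomology vanishes on $U$ by \eqref{d-bar cohomology of functions}, and $\ker(\dbar_\mathcal{E})|_U$ is the free $\mathcal{O}_{X/\mathcal{P}}|_U$-module on $e'_1,\dots,e'_r$.

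Next I would globalize. Local vanishing of $H^i$ of a complex of sheaves already gives vanishing of the sheaf cohomology sheaves globally, so $\mathcal{H}^i(Gr^F_0\Omega^\bullet_X\otimes_{\mathcal{O}_X}\mathcal{E},\dbar_\mathcal{E}) = 0$ for $i\neq 0$ and the complex is a resolution of the kernel sheaf $\mathcal{K} := \ker(\dbar_\mathcal{E})$; this is a purely local assertion and needs no patching. Local freeness of $\mathcal{K}$ over $\mathcal{O}_{X/\mathcal{P}}$ of rank $r$ is likewise local and was established in the previous step. Finally, to see that the natural map $\alpha\colon \mathcal{O}_X \otimes_{\mathcal{O}_{X/\mathcal{P}}} \mathcal{K} \to \mathcal{E}$ is an isomorphism, I would again argue locally: in a $\dbar$-flat frame $e'_i$, the $e'_i$ form simultaneously an $\mathcal{O}_X$-basis of $\mathcal{E}$ and an $\mathcal{O}_{X/\mathcal{P}}$-basis of $\mathcal{K}$, so $\alpha$ sends the basis $1\otimes e'_i$ to the basis $e'_i$ and is thus an isomorphism on $U$; since this holds on a cover, $\alpha$ is an isomorphism of sheaves.

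The main obstacle is the first step: producing the $\dbar$-flat local frame, i.e. the "integration" of a flat connection along $\mathcal{P}$ into locally constant (along $\mathcal{P}$) sections. In the honestly complex-analytic or trivial-distribution case this is the Newlander--Nirenberg theorem (or the holomorphic Poincaré lemma), and in the general integrable-distribution setting it is exactly the content for which the paper cites \cite{Rawnsley} and \cite{FW}; the clean way to phrase it is that \eqref{d-bar cohomology of functions}, applied with coefficients in the sheaf of matrices, solves the inductive (or direct) $\dbar$-problem $\dbar g + g\theta = 0$, with $\dbar_\mathcal{E}^2=0$ guaranteeing that the curvature obstruction to a solution vanishes. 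Everything after that — the cohomology vanishing, local freeness of the kernel, and the isomorphism $\alpha$ — is a routine translation through the local model and the partition of unity is not even needed since all three conclusions are local in nature.
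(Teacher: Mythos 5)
The paper itself offers no proof of this lemma: Section \ref{Calculus in the presence of distribution} is explicitly a review, with proofs deferred to \cite{Kostant}, \cite{Rawnsley} and \cite{FW}. Your overall strategy --- reduce all three assertions to the local existence of a $\dbar_\mathcal{E}$-flat frame $e'_1,\dots,e'_r$, after which the $\dbar$-complex of $\mathcal{E}$ splits into $r$ copies of $(Gr^F_0\Omega^\bullet_X,\dbar)$ and the vanishing, the local freeness of $\ker(\dbar_\mathcal{E})$, and the isomorphism $\alpha$ all follow from \eqref{d-bar cohomology of functions} --- is the standard one and is correct; you are also right that all three conclusions are local in nature, so no partition of unity is needed.

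The one step that is wrong as written is your mechanism for producing the flat frame. The equation $\dbar g=-g\theta$ is not a linear $\dbar$-problem with a prescribed $\dbar$-closed right-hand side: the unknown $g$ multiplies $\theta$, so \eqref{d-bar cohomology of functions} ``applied entrywise'' produces nothing, and there is no single cohomology class whose vanishing is the obstruction. (A naive iteration $h_{n+1}=-\dbar^{-1}(\theta+h_n\theta)$ also fails at the sheaf level, since the intermediate right-hand sides need not be $\dbar$-closed, and convergence would in any case require quantitative estimates that the sheaf-theoretic statement \eqref{d-bar cohomology of functions} does not provide.) The argument does work verbatim in rank one: there flatness gives $\dbar\theta=0$, so locally $\theta=\dbar f$ and $e'=e^{-f}e$ is flat. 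In higher rank the existence of a flat frame for a flat connection along $\mathcal{P}$ is a genuine analytic theorem of Koszul--Malgrange/Newlander--Nirenberg type --- proved, e.g., by passing to Nirenberg's local normal form for a distribution satisfying \eqref{sufficient condition} and combining ODE flows in the real directions with the integrability theorem for $\dbar$ in the complex directions --- and this is precisely the content supplied by \cite{Rawnsley} and \cite{FW}. Since your closing paragraph concedes exactly this point and defers to those references, the proposal is acceptable as a reduction to the cited literature; just delete the claim that the scalar $\dbar$-lemma by itself solves $\dbar g=-g\theta$.
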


\begin{remark}
Suppose that $\mathcal{F}$ is a locally free $\mathcal{O}_{X/\mathcal{P}}$-module of finite rank. Then, $\mathcal{O}_X\otimes_{\mathcal{O}_{X/\mathcal{P}}}\mathcal{F}$ is a locally free $\mathcal{O}_X$-module of rank $\rk_{\mathcal{O}_{X/\mathcal{P}}}\mathcal{F}$ and is endowed in a canonical way with a $\dbar$-operator, namely, $\dbar\otimes\id$.
The assignments $\mathcal{F}\mapsto(\mathcal{O}_X\otimes_{\mathcal{O}_{X/\mathcal{P}}}\mathcal{F},\dbar\otimes\id)$ and $(\mathcal{E},\dbar_\mathcal{E}) \mapsto \ker(\dbar_\mathcal{E})$ are mutually inverse equivalences of suitably defined categories.
\end{remark}

\subsection{Calculus}\label{subsection: calculus}
The adjoint action of $\mathcal{P}$ on $\mathcal{T}_X$ preserves $\mathcal{P}$, hence descends to an action on $\mathcal{T}_X/\mathcal{P}$. The latter action defines a connection along $\mathcal{P}$, i.e. a canonical $\dbar$-operator on $\mathcal{T}_X/\mathcal{P}$ which is easily seen to coincide with the one induced via the duality pairing between the latter and $\mathcal{P}^\perp$.\footnote{In the case of a real polarization this connection is known as the Bott connection.} Let $\mathcal{T}_{X/\mathcal{P}} := (\mathcal{T}_X/\mathcal{P})^\mathcal{P}$ (the subsheaf of $\mathcal{P}$ invariant section, equivalently, the kernel of the $\dbar$-operator on $\mathcal{T}_X/\mathcal{P}$. The Lie bracket on $\mathcal{T}_X$ (respectively, the action of $\mathcal{T}_X$ on $\mathcal{O}_X$) induces a Lie bracket on $\mathcal{T}_{X/\mathcal{P}}$ (respectively, an action of $\mathcal{T}_{X/\mathcal{P}}$ on $\mathcal{O}_{X/\mathcal{P}}$). The bracket and the action on $\mathcal{O}_{X/\mathcal{P}}$ endow $\mathcal{T}_{X/\mathcal{P}}$ with a structure of an $\mathcal{O}_{X/\mathcal{P}}$-Lie algebroid.

The action of $\mathcal{P}$ on $\Omega^1_X$ by Lie derivative restricts to a flat connection along $\mathcal{P}$, i.e. a canonical $\dbar$-operator on $\mathcal{P}^\perp$ and, therefore, on $\bigwedge^i\mathcal{P}^\perp$ for all $i$. It is easy to see that the multiplication map $Gr^F_0\Omega^\bullet\otimes\bigwedge^i\mathcal{P}^\perp \to Gr^F_{-i}\Omega^\bullet[i]$ is an isomorphism which identifies the $\dbar$-complex of $\bigwedge^i\mathcal{P}^\perp$ with $Gr^F_{-i}\Omega^\bullet[i]$. Let $\Omega^i_{X/\mathcal{P}}:= H^i(Gr^F_{-i}\Omega^\bullet_X,\dbar)$ (so that $\mathcal{O}_{X/\mathcal{P}}:=\Omega^0_{X/\mathcal{P}}$). Then,  $\Omega^i_{X/\mathcal{P}} \subset \bigwedge^i\mathcal{P}^\perp \subset \Omega^i_X$. The wedge product of differential forms
induces a structure of a graded-commutative algebra  on $\Omega^\bullet_{X/\mathcal {P}} := \oplus_i \Omega^i_{X/\mathcal{P}}[-i] = H^\bullet(Gr^F\Omega^\bullet_X,\dbar)$. The multiplication induces an isomorphism $\bigwedge^i_{\mathcal{O}_{X/\mathcal{P}}} \Omega^1_{X/\mathcal{P}} \to \Omega^i_{X/\mathcal{P}}$. The de Rham differential $d$ restricts to the map $d :
\Omega^i_{X/\mathcal{P}} \to \Omega^{i+1}_{X/\mathcal{P}}$ and the complex $\Omega^\bullet_{X/\mathcal{P}}:=(\Omega^\bullet_{X/\mathcal{P}},d)$ is a commutative DGA.

The Hodge filtration $F_\bullet\Omega^\bullet_{X/\mathcal{P}}$ is defined by
\[
F_i\Omega^\bullet_{X/\mathcal{P}} = \oplus_{j\geq -i} \Omega^j_{X/\mathcal{P}} ,
\]
so that the inclusion $\Omega^\bullet_{X/\mathcal{P}} \hookrightarrow \Omega^\bullet_X$ is filtered with respect to the Hodge filtration. It follows from Lemma \ref{dbar lemma} that it is, in fact, a filtered quasi-isomorphism.

The duality pairing $\mathcal{T}_X/\mathcal{P}\otimes\mathcal{P}^\perp \to \mathcal{O}_X$ restricts to a non-degenerate pairing $\mathcal{T}_{X/\mathcal{P}} \otimes_{\mathcal{O}_{X/\mathcal{P}}} \Omega^1_{X/\mathcal{P}} \to \mathcal{O}_{X/\mathcal{P}}$.
The action of $\mathcal{T}_X/\mathcal{P}$ on $\mathcal{O}_{X/\mathcal{P}}$ the pairing and the de Rham differential are related by the usual formula $\xi(f)=\iota_\xi df$, for $\xi \in \mathcal{T}_{X/\mathcal{P}}$ and $f\in\mathcal{O}_{X/\mathcal{P}}$.

\subsection{Jets}
Let $\pr_i:X\times X\to X$, $i = 1,2$, denote the projection on the
$i^{\text{th}}$ factor. The restriction of the canonical map
\[
\pr_i^* : \pr_i^{-1}\mathcal{O}_X \to \mathcal{O}_{X\times X}
\]
to the subsheaf $\mathcal{O}_{X/\mathcal{P}}$ takes values in the
subsheaf $\mathcal{O}_{X\times X/\mathcal{P}\times\mathcal{P}}$
hence induces the map
\[
\pr_i^* : \mathcal{O}_{X/\mathcal{P}} \to
(\pr_i)_*\mathcal{O}_{X\times X/\mathcal{P}\times\mathcal{P}} \ .
\]

Let $\Delta_X : X\to X\times X$ denote the diagonal embedding. It
follows from the Leibniz rule that the restriction of the canonical
map
\[
\Delta_X^* : \mathcal{O}_{X\times X} \to (\Delta_X)_*\mathcal{O}_X
\]
to the subsheaf $\mathcal{O}_{X\times
X/\mathcal{P}\times\mathcal{P}}$ takes values in the subsheaf
$(\Delta_X)_*\mathcal{O}_{X/\mathcal{P}}$. Let
\[
\mathcal{I}_{X/\mathcal{P}} := \ker(\Delta_X^*) \cap \mathcal{O}_{X\times
X/\mathcal{P}\times\mathcal{P}} \ .
\]
The sheaf $\mathcal{I}_{X/P}$ plays the role of the defining ideal
of the ``diagonal embedding $X/\mathcal{P} \to X/\mathcal{P}\times
X/\mathcal{P}$": there is a short exact sequence of sheaves on
$X\times X$
\[
0 \to \mathcal{I}_{X/\mathcal{P}} \to \mathcal{O}_{X\times
X/\mathcal{P}\times\mathcal{P}} \to
(\Delta_X)_*\mathcal{O}_{X/\mathcal{P}} \to 0
\]

For a locally-free $\mathcal{O}_{X/\mathcal{P}}$-module of finite
rank $\mathcal{E}$ let
\begin{eqnarray*}
\mathcal{J}_{X/\mathcal{P}}^k(\mathcal{E}) & := &
(\pr_1)_*\left(\mathcal{O}_{X\times X/\mathcal{P}\times{\mathcal
P}}/\mathcal{I}_{X/\mathcal{P}}^{k+1}\otimes_{\pr_2^{-1}{\mathcal
O}_{X/\mathcal{P}}}\pr_2^{-1}\mathcal{E}\right) \ , \\
\mathcal{J}^k_{X/\mathcal{P}}& := &
\mathcal{J}_{X/\mathcal{P}}^k(\mathcal{O}_{X/\mathcal{P}}) \ .
\end{eqnarray*}
It is clear from the above definition that
$\mathcal{J}^k_{X/\mathcal{P}}$ is, in a natural way, a commutative
algebra and $\mathcal {J}_{X/\mathcal{P}}^k(\mathcal{E})$ is a
$\mathcal {J}^k_{X/\mathcal{P}}$-module.

Let
\[
\vac^{(k)} : \mathcal{O}_{X/\mathcal{P}}\to
\mathcal{J}^k_{X/\mathcal{P}}
\]
denote the composition
\[
\mathcal{O}_{X/\mathcal{P}} \xrightarrow{\pr_1^*}
(\pr_1)_*\mathcal{O}_{X\times X/\mathcal{P}\times\mathcal{P}} \to
\mathcal{J}^k_{X/\mathcal{P}}
\]
In what follows, unless stated explicitly otherwise, we regard
$\mathcal{J}_{X/\mathcal{P}}^k(\mathcal{E})$ as a $\mathcal
{O}_{X/\mathcal{P}}$-module via the map $\vac^{(k)}$.

Let
\[
j^k: \mathcal{E} \to \mathcal{J}_{X/\mathcal{P}}^k(\mathcal{E})
\]
denote the composition
\[
\mathcal{E} \xrightarrow{e\mapsto 1\otimes e}
(\pr_1)_*\mathcal{O}_{X\times X/\mathcal{P}\times\mathcal{P}}
\otimes_\mathbb{C} \mathcal{E} \to
\mathcal{J}_{X/\mathcal{P}}^k(\mathcal{E})
\]
Note that the map $j^k$ is not $\mathcal{O}_{X/\mathcal{P}}$-linear unless
$k=0$.

For $0\leq k\leq l$ the inclusion ${\mathcal
I}_{X/\mathcal{P}}^{l+1}\to\mathcal{I}_{X/\mathcal{P}}^{k+1}$
induces the surjective map $\pi_{l,k}:{\mathcal
J}^l_{X/\mathcal{P}}(\mathcal{E}) \to {\mathcal
J}^k_{X/\mathcal{P}}(\mathcal{E})$. The sheaves ${\mathcal
J}^k_{X/\mathcal{P}}(\mathcal{E})$, $k=0,1,\ldots$ together with
the maps $\pi_{l,k}$, $k\leq l$ form an inverse system. Let
$\mathcal{J}_{X/\mathcal{P}}(\mathcal{E}) = {\mathcal
J}^\infty_{X/\mathcal{P}}(\mathcal{E}):=
\underset{\longleftarrow}{\lim}{\mathcal
J}^k_{X/\mathcal{P}}(\mathcal{E})$. Thus, ${\mathcal
J}_{X/\mathcal{P}}(\mathcal{E})$ carries a natural (adic) topology.

The maps $\vac^{(k)}$ (respectively, $j^k$), $k=0,1,2,\ldots$ are
compatible with the projections $\pi_{l,k}$, i.e.
$\pi_{l,k}\circ\vac^{(l)} = \vac^{(k)}$ (respectively,
$\pi_{l,k}\circ j^l = j^k$). Let $\vac :=
\underset{\longleftarrow}{\lim} \vac^{(k)}$,
 $j^\infty:=\underset{\longleftarrow}{\lim}j^k$.

Let
\begin{multline*}
d_1 : \mathcal{O}_{{X\times X}/\mathcal{P}\times{\mathcal
P}}\otimes_{\pr_2^{-1}\mathcal{O}_{X/{\mathcal
P}}}\pr_2^{-1}\mathcal{E} \to \\
\to \pr_1^{-1}\Omega^1_{X/\mathcal{P}}\otimes_{\pr_1^{-1}{\mathcal
O}_{X/\mathcal{P}}}\mathcal{O}_{{X\times X}/{\mathcal
P}\times\mathcal{P}}\otimes_{\pr_2^{-1}\mathcal{O}_{X/{\mathcal
P}}}\pr_2^{-1}\mathcal{E}
\end{multline*}
denote the exterior derivative along the first factor. It satisfies
\begin{multline*}
d_1(\mathcal{I}_{X/\mathcal{P}}^{k+1}\otimes_{\pr_2^{-1}{\mathcal
O}_{X/\mathcal{P}}}\pr_2^{-1}\mathcal{E})\subset
\\
\pr_1^{-1}\Omega^1_{X/\mathcal{P}}\otimes_{\pr_1^{-1}\mathcal{O}_{X/{\mathcal
P}}}\mathcal{I}_{X/\mathcal{P}}^k\otimes_{\pr_2^{-1}{\mathcal
O}_{X/\mathcal{P}}}\pr_2^{-1}\mathcal{E}
\end{multline*}
for each $k$ and, therefore, induces the map
\[
d_1^{(k)} : \mathcal{J}_{X/\mathcal{P}}^k(\mathcal{E})\to\Omega^1_{X/{\mathcal
P}}\otimes_{\mathcal{O}_{X/\mathcal{P}}}\mathcal{J}_{X/\mathcal{P}}^{k-1}(\mathcal{E})
\]
The maps $d_1^{(k)}$ for different values of $k$ are compatible with
the maps $\pi_{l,k}$ giving rise to the \emph{canonical flat
connection}
\[
\nabla^{can}_\mathcal{E} : \mathcal{J}_{X/\mathcal{P}}({\mathcal
E})\to\Omega^1_{X/\mathcal{P}}\otimes_{\mathcal{O}_{X/{\mathcal
P}}}\mathcal{J}_{X/\mathcal{P}}(\mathcal{E}) \ .
\]

Let
\begin{eqnarray*}
\mathcal{J}_{X,\mathcal{P}}(\mathcal{E}) & := &
\mathcal{O}_X\otimes_{\mathcal{O}_{X/\mathcal{P}}}
\mathcal{J}_{X/\mathcal{P}}(\mathcal{E}) \\
\mathcal{J}_{X,\mathcal{P}} & := &
\mathcal{J}_{X,\mathcal{P}}(\mathcal{O}_{X/\mathcal{P}}) \\
\overline{\mathcal{J}}_{X,\mathcal{P}} & := & \mathcal{J}_{X,\mathcal{P}}/\vac(\mathcal{O}_X) 
\end{eqnarray*}
Here and below by abuse of notation we write $(\bullet)\otimes_{{\mathcal
O}_{X/\mathcal{P}}}\mathcal{J}_{X/\mathcal{P}}(\mathcal{E})$ for
$\underset{\longleftarrow}{\lim}(\bullet)\otimes_{{\mathcal
O}_{X/\mathcal{P}}}\mathcal{J}_{X/\mathcal{P}}^k(\mathcal{E})$.

The canonical flat connection extends to the flat connection
\[
\nabla^{can}_\mathcal{E} : \mathcal{J}_{X,\mathcal{P}}(\mathcal{E})
\to \Omega^1_X\otimes_{\mathcal{O}_X}
\mathcal{J}_{X,\mathcal{P}}(\mathcal{E}) \ .
\]

\subsection{De Rham complexes}\label{De Rham cxs}
Suppose that $\mathcal{F}$ is an $\mathcal{O}_X$-module and $\nabla : \mathcal{F} \to \Omega^1_X\otimes_{\mathcal{O}_X}\mathcal{F}$ is a flat connection. The flat connection $\nabla$ extends uniquely to a differential $\nabla$ on $\Omega^\bullet_X\otimes_{\mathcal{O}_X}\mathcal{F}$ subject to the Leibniz rule with respect to the $\Omega^\bullet_X$-module structure. We will make use of the following notation:
\[
(\Omega^i_X\otimes_{\mathcal{O}_X}\mathcal{F})^{cl} := \ker(\Omega^i_X\otimes_{\mathcal{O}_X}\mathcal{F} \xrightarrow{\nabla} \Omega^{i+1}_X\otimes_{\mathcal{O}_X}\mathcal{F})
\]

Suppose that $(\mathcal{F}^\bullet, d)$ is a complex of $\mathcal{O}_X$-modules with a flat connection $\nabla = (\nabla^i)_{i\in\mathbb{Z}}$, i.e. for each $i\in\mathbb{Z}$, $\nabla^i$ is a flat connection on $\mathcal{F}^i$ and $[d,\nabla] = 0$. Then, $(\Omega^\bullet_X\otimes_{\mathcal{O}_X}\mathcal{F}^\bullet, \nabla, \id\otimes d)$ is a double complex. We denote by $\DR(\mathcal{F})$ the total complex.

\section{Formality for the algebroid Hochschild complex}\label{for}
Until further notice we work with a fixed $C^\infty$ manifold $X$ equipped with a complex distribution $\mathcal{P}$ which satisfies the standing assumptions of \ref{subsection: the hodge filtration} and denote  $\mathcal{J}_{X,\mathcal{P}}$ (respectively, $\overline{\mathcal{J}}_{X,\mathcal{P}}$, $\mathcal{J}_{X,\mathcal{P}}(\mathcal{E})$) by $\mathcal{J}$ (respectively, $\overline{\mathcal{J}}$, $\mathcal{J}(\mathcal{E})$).

\subsection{Hochschild cochains in formal geometry}\label{subsection: Hochschild cochains in formal geometry}
In what follows we will be interested in the Hochschild complex in the context of formal
geometry. To this end we define $C^p(\mathcal{J})$ to be the sheaf of continuous (with
respect to the adic topology) $\mathcal{O}_X$-multilinear Hochschild cochains on
$\mathcal{J}$. $\mathcal{O}_X$-modules equipped with flat connections form a closed monoidal category. In particular, the sheaf of $\mathcal{O}_X$-multilinear maps $\mathcal{J}_X^{\times p} \to \mathcal{J}_X$ is endowed with a canonical flat connection induced by $\nabla^{can}$.
It follows directly from the definitions that $\nabla^{can}$ preserves $C^p(\mathcal{J}_X)$.

The Gerstenhaber bracket endows $\mathfrak{g}(\mathcal{J}) = C^\bullet(\mathcal{J})[1]$ with a structure of a graded Lie algebra. The product on $\mathcal{J}$ is a global section of $C^2(\mathcal{J})$, hence the Hochschild differential preserves $C^\bullet(\mathcal{J})$.

The complex $\Gamma(X;\DR(C^\bullet(\mathcal{J}))) =
(\Gamma(X;\Omega^\bullet_X \otimes C^\bullet(\mathcal{J}_X)), \nabla^{can} +
\delta)$ is a differential graded brace algebra in a canonical way. The abelian Lie algebra
$\mathcal{J} = C^0(\mathcal{J})$ acts on the brace algebra
$C^\bullet(\mathcal{J}_X)$ by derivations of degree $-1$ via the restriction of the adjoint action with respect to the Gerstenhaber bracket. The above action factors through an action of $\overline{\mathcal{J}}$. Therefore, the
abelian Lie algebra $\Gamma(X;\Omega^2_X\otimes\overline{\mathcal{J}})$
acts on the brace algebra $\Omega^\bullet_X \otimes C^\bullet(\mathcal{J})$ by
derivations of degree $+1$; the action of an element $a$ is denoted by $i_a$.

Due to commutativity of $\mathcal{J}$, for any $\omega\in
\Gamma(X;\Omega^2_X\otimes\overline{\mathcal{J}})$ the operation
$\iota_\omega$ commutes with the Hochschild differential $\delta$. Moreover, if $\omega$
satisfies $\nabla^{can}\omega = 0$, then $\nabla^{can} + \delta + i_\omega$ is a
square-zero derivation of degree one of the brace structure. We refer to the complex
\[
\Gamma(X;\DR(C^\bullet(\mathcal{J}))_\omega := (\Gamma(X;\Omega^\bullet_X \otimes C^\bullet(\mathcal{J})), \nabla^{can} + \delta + i_\omega)
\]
as the \emph{$\omega$-twist} of $\Gamma(X;\DR(C^\bullet(\mathcal{J}))$.

Let
\[
\mathfrak{g}_{\mathtt{DR}}(\mathcal{J})_\omega :=  \Gamma(X;\DR(C^\bullet(\mathcal{J}_X))[1])_\omega
\]
regarded as a DGLA.

\subsection{Formality for jets}\label{subsection: DTT for jets}
Let $V^\bullet(\mathcal{J}) =
\Symm^\bullet_{\mathcal{J}}(\Der^{cont}_{\mathcal{O}_X}(\mathcal{J})[-1])$.

Working now in the category of graded $\mathcal{O}_X$-modules we have the diagram
\begin{equation}\label{diag e2 alg jets}
\Omega_\mathbf{e_2}(\mathbb{F}_{\dual{\mathbf{e_2}}}(C^\bullet(\mathcal{J})), M) \xleftarrow{\Omega_\mathbf{e_2}(\sigma)} \\ \Omega_\mathbf{e_2}(\Xi(\mathcal{J}), d_{V^\bullet(\mathcal{J})}) \xrightarrow{\nu} V^\bullet(\mathcal{J})
\end{equation}
of sheaves of differential graded $\mathcal{O}_X$-$\mathbf{e_2}$-algebras. According to the Theorem
\ref{thm DTT}   the morphisms $\Omega_\mathbf{e_2}(\sigma)$ and $\nu$ in
\eqref{diag e2 alg jets} are quasi-isomorphisms. The sheaves of DGLA
$\Omega_\mathbf{e_2}(\mathbb{F}_{\dual{\mathbf{e_2}}}(C^\bullet(\mathcal{J})),
M)[1]$ and $ C^\bullet(\mathcal{J})[1]$ are canonically $L_\infty$-quasi-isomorphic.

The canonical flat connection $\nabla^{can}$ on $\mathcal{J}$ induces a flat connection
which we denote $\nabla^{can}$ as well on each of the objects in the diagram \eqref{diag
e2 alg jets}. Moreover, the maps $\Omega_\mathbf{e_2}(\sigma)$ and $\nu$ are flat with
respect to $\nabla^{can}$, hence induce the maps of respective de Rham complexes
\begin{multline}\label{diag e2 alg jets de Rham}
\DR(\Omega_\mathbf{e_2}(\mathbb{F}_{\dual{\mathbf{e_2}}}(C^\bullet(\mathcal{J})),
M)) \xleftarrow{\DR(\Omega_\mathbf{e_2}(\sigma))} \\
\DR(\Omega_\mathbf{e_2}(\Xi(\mathcal{J}), d_{V^\bullet(\mathcal{J})}))
\xrightarrow{\DR(\nu)} \DR(V^\bullet(\mathcal{J})) .
\end{multline}
All objects in the diagram
\eqref{diag e2 alg jets de Rham} have canonical structures of differential graded
$\mathbf{e_2}$-algebras and the maps are morphisms of such.

The DGLA
$\Omega_\mathbf{e_2}(\mathbb{F}_{\dual{\mathbf{e_2}}}(C^\bullet(\mathcal{J})),
M)[1]$ and $C^\bullet(\mathcal{J})[1]$ are canonically $L_\infty$-quasi-isomorphic in a
way compatible with $\nabla^{can}$. Hence, the DGLA
$\DR(\Omega_\mathbf{e_2}(\mathbb{F}_{\dual{\mathbf{e_2}}}(C^\bullet(\mathcal{J})),
M)[1])$ and $\DR(C^\bullet(\mathcal{J})[1])$ are canonically
$L_\infty$-quasi-isomorphic.

\subsection{Formality for jets with a twist}\label{subsection: jets with a twist}
Suppose that
$\omega\in\Gamma(X;\Omega^2_X\otimes\overline{\mathcal{J}})$ satisfies
$\nabla^{can}\omega = 0$.

For each of the objects in \eqref{diag e2 alg jets de Rham} we denote by $i_\omega$ the
operation which is induced by the operation described in \ref{subsection: symmetries} and the
wedge product on $\Omega^\bullet_X$. Thus, for each differential graded
$\mathbf{e_2}$-algebra $(N^\bullet, d)$ in \eqref{diag e2 alg jets de Rham} we have a
derivation of degree one and square zero $i_\omega$ which anticommutes with $d$ and we
denote by $(N^\bullet,d)_\omega$ the \emph{$\omega$-twist} of $(N^\bullet, d)$, i.e. the
differential graded $\mathbf{e_2}$-algebra $(N^\bullet, d + i_\omega)$. Since the
morphisms in \eqref{diag e2 alg jets de Rham} commute with the respective operations
$i_\omega$, they give rise to morphisms of respective $\omega$-twists
\begin{multline}\label{diag e2 alg jets de Rham twisted}
\DR(\Omega_\mathbf{e_2}(\mathbb{F}_{\dual{\mathbf{e_2}}}(C^\bullet(\mathcal{J})),
M))_\omega \xleftarrow{\DR(\Omega_\mathbf{e_2}(\sigma))} \\
\DR(\Omega_\mathbf{e_2}(\Xi(\mathcal{J}), d_{V^\bullet(\mathcal{J})}))_\omega
\xrightarrow{\DR(\nu)} \DR(V^\bullet(\mathcal{J}))_\omega .
\end{multline}

Let $G_\bullet\Omega^\bullet_X$ denote the filtration given by
$G_i\Omega^\bullet_X = \Omega^{\geqslant -i}_X$. The filtration
$G_\bullet\Omega^\bullet_X$ induces a filtration denoted
$G_\bullet \DR(K^\bullet,d)_\omega$ for each object $(K^\bullet, d)$ of \eqref{diag e2 alg jets de Rham twisted} defined by $G_i \DR(K^\bullet,d)_\omega = G_i \Omega_X^\bullet \otimes K^\bullet$. As is easy to see, the associated graded complex is given by
\begin{equation}\label{Gr stupid}
Gr^G_{-p}\DR(K^\bullet,d)_\omega = (\Omega^p_X[-p]\otimes K^\bullet , \id\otimes d) .
\end{equation}
It is clear that the morphisms $\DR(\Omega_\mathbf{e_2}(\sigma))$ and $\DR(\nu)$ are
filtered with respect to $G_\bullet$.

\begin{thm}\label{diag e2 coalg jets de Rham twisted are filtered quisms}
The morphisms in \eqref{diag e2 alg jets de Rham twisted} are filtered quasi-isomorphisms,
i.e. the maps $Gr^G_i\DR(\Omega_\mathbf{e_2}(\sigma))$ and $Gr^G_i\DR(\nu)$ are
quasi-isomorphisms for all $i \in \mathbb{Z}$.
\end{thm}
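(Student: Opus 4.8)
The plan is to reduce the filtered statement to its associated graded pieces, where the twisting by $i_\omega$ disappears. By \eqref{Gr stupid}, for each object $(K^\bullet,d)$ in \eqref{diag e2 alg jets de Rham twisted} the associated graded complex in filtration degree $-p$ is $(\Omega^p_X[-p]\otimes K^\bullet,\ \id\otimes d)$, in which the twisting term $i_\omega$ — which strictly increases the $G$-filtration — acts as zero. Thus $Gr^G_{-p}\DR(\Omega_\mathbf{e_2}(\sigma))_\omega$ is, up to the tensor factor $\Omega^p_X[-p]$ and a shift, simply $\id_{\Omega^p_X}\otimes\,\Omega_\mathbf{e_2}(\sigma)$, and similarly for $Gr^G_{-p}\DR(\nu)_\omega$. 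So it suffices to show that $\Omega_\mathbf{e_2}(\sigma)$ and $\nu$ are quasi-isomorphisms of sheaves of differential graded $\mathbf{e_2}$-algebras (the untwisted maps of \eqref{diag e2 alg jets}), and that they remain so after tensoring with the locally free $\mathcal{O}_X$-module $\Omega^p_X$.

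First I would invoke Theorem~\ref{thm DTT} in the sheaf-theoretic generality noted in Section~\ref{s:formality}: the constructions of \cite{DTT} apply verbatim in any closed symmetric monoidal category, here the category of sheaves of $\mathcal{O}_X$-modules, and the proof of quasi-isomorphism rests only on the flatness of the module of derivations together with the Hochschild–Kostant–Rosenberg theorem. Applied with $\mathcal{K}=\mathcal{J}=\mathcal{J}_{X,\mathcal{P}}$ — which is a sheaf of commutative $\mathcal{O}_X$-algebras that is regular in the appropriate (formal, or pro-finite rank free) sense, with $\Der^{cont}_{\mathcal{O}_X}(\mathcal{J})$ flat — this gives that $\Omega_\mathbf{e_2}(\sigma)$ and $\nu$ in \eqref{diag e2 alg jets} are quasi-isomorphisms; this is exactly the assertion already recorded in \ref{subsection: DTT for jets}. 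The one point to check carefully is that HKR holds for $\mathcal{J}$ in the formal-geometry setting, i.e. that the continuous Hochschild cohomology of $\mathcal{J}$ over $\mathcal{O}_X$ is the exterior algebra on $\Der^{cont}_{\mathcal{O}_X}(\mathcal{J})$; this is standard for jet algebras, since étale-locally $\mathcal{J}$ is a power series algebra over $\mathcal{O}_{X/\mathcal{P}}$ (cf. Lemma~\ref{dbar lemma}), and HKR for power series rings is classical.

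Next I would observe that tensoring over $\mathcal{O}_X$ with the locally free sheaf $\Omega^p_X$ is exact, hence preserves quasi-isomorphisms; therefore $\id_{\Omega^p_X}\otimes\,\Omega_\mathbf{e_2}(\sigma)$ and $\id_{\Omega^p_X}\otimes\,\nu$ are quasi-isomorphisms of complexes of sheaves. Combined with the identification of $Gr^G$ above, this shows $Gr^G_i\DR(\Omega_\mathbf{e_2}(\sigma))_\omega$ and $Gr^G_i\DR(\nu)_\omega$ are quasi-isomorphisms for every $i\in\mathbb{Z}$, which is precisely the claim. (That the $G_\bullet$-filtration is compatible with the maps was already noted; that it is exhaustive and bounded below in each cohomological degree — so that ``filtered quasi-isomorphism'' is a meaningful and useful statement — is immediate since $\Omega^j_X=0$ for $j>\dim X$.)

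The main obstacle I anticipate is not the filtration bookkeeping, which is routine, but verifying that the DTT machinery genuinely applies to $\mathcal{J}$: namely, that $\mathcal{J}$ is ``regular'' enough and that the \emph{continuous} Hochschild complex $C^\bullet(\mathcal{J})$ — built from continuous $\mathcal{O}_X$-multilinear cochains, not all cochains — computes the correct (exterior-algebra) cohomology, so that the sub-$\dual{\mathbf{e_2}}$-coalgebra $\Xi(\mathcal{J})$ and the maps $\sigma,\iota$ retain their properties from \cite{DTT}. Once the passage to the continuous/topological setting is justified (which one does by working at finite jet order $\mathcal{J}^k$, where everything is an honest polynomial-type algebra, and then passing to the inverse limit, using that the relevant complexes are degreewise pro-finitely-generated so that $\varprojlim$ is exact on them), the rest follows formally from Theorem~\ref{thm DTT} and exactness of $\Omega^p_X\otimes_{\mathcal{O}_X}(-)$.
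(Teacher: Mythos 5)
Your proposal is correct and follows essentially the same route as the paper: pass to the associated graded of the $G_\bullet$-filtration, where the twist $i_\omega$ vanishes because it raises form degree by two, identify the graded map as $\id_{\Omega^p_X}\otimes\Omega_{\mathbf{e_2}}(\sigma)$ (respectively $\id\otimes\nu$), and conclude from Theorem~\ref{thm DTT} together with flatness of $\Omega^p_X$ over $\mathcal{O}_X$. Your additional remarks on justifying the HKR input for the continuous Hochschild complex of $\mathcal{J}$ address a point the paper simply asserts in \ref{subsection: DTT for jets} rather than proves, but they do not change the argument.
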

\begin{proof}
We consider the case of $\DR(\Omega_\mathbf{e_2}(\sigma))$ leaving $Gr^G_i\DR(\nu)$ to
the reader.

The map $Gr_{-p}\DR(\Omega_\mathbf{e_2}(\sigma))$ induced by
$\DR(\Omega_\mathbf{e_2}(\sigma))$ on the respective associated graded objects in
degree $-p$ is equal to the map of complexes
\begin{equation}\label{Gr DR}
\id\otimes\Omega_\mathbf{e_2}(\sigma) \colon \Omega^p_X\otimes\Omega_\mathbf{e_2}(\Xi(\mathcal{J}), d_{V^\bullet(\mathcal{J})}) \to \Omega^p_X\otimes\Omega_\mathbf{e_2}(\mathbb{F}_{\dual{\mathbf{e_2}}}(C^\bullet(\mathcal{J})), M) .
\end{equation}
The map $\sigma$ is a quasi-isomorphism by Theorem \ref{thm DTT}, therefore so is
$\Omega_\mathbf{e_2}(\sigma)$. Since $\Omega^p_X$ is flat over
$\mathcal{O}_X$, the map \eqref{Gr DR} is a quasi-isomorphism.
\end{proof}

\begin{cor}\label{cor: e2 coalg quisms}
The maps $\DR(\Omega_\mathbf{e_2}(\sigma))$ and $\DR(\nu)$ in \eqref{diag e2 alg jets
de Rham twisted} are quasi-isomorphisms of sheaves of differential graded
$\mathbf{e_2}$-algebras.
\end{cor}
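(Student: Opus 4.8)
The plan is to deduce Corollary \ref{cor: e2 coalg quisms} from Theorem \ref{diag e2 coalg jets de Rham twisted are filtered quisms} by a standard filtered-complex/spectral-sequence comparison argument, together with the observation that all maps in sight are maps of sheaves of differential graded $\mathbf{e_2}$-algebras by construction. First I would recall that each object $\DR(K^\bullet,d)_\omega$ in \eqref{diag e2 alg jets de Rham twisted} carries the filtration $G_\bullet$ and that this filtration is exhaustive and, on each bounded-below truncation (or in each cohomological degree), bounded — indeed $G_i\DR(K^\bullet,d)_\omega = \Omega^{\geqslant -i}_X\otimes K^\bullet$, so $G_{-1} = 0$ on the de Rham side in the relevant range and $G_i$ eventually stabilizes to the whole complex since $X$ is finite-dimensional. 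Hence the associated spectral sequences converge.

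Next I would invoke Theorem \ref{diag e2 coalg jets de Rham twisted are filtered quisms}: the maps $\DR(\Omega_\mathbf{e_2}(\sigma))$ and $\DR(\nu)$ induce isomorphisms on $Gr^G_i$ for every $i$, hence isomorphisms on the $E_1$-pages of the associated spectral sequences; by the convergence just noted, the Eilenberg--Moore/comparison theorem for filtered complexes gives that both maps are quasi-isomorphisms of complexes of sheaves. Finally, since $\DR(\Omega_\mathbf{e_2}(\sigma))$ and $\DR(\nu)$ are by construction morphisms of sheaves of differential graded $\mathbf{e_2}$-algebras (the $\mathbf{e_2}$-algebra structures on all objects in \eqref{diag e2 alg jets de Rham} are preserved under $\omega$-twisting because $i_\omega$ is a derivation of the $\mathbf{e_2}$-structure, as noted in \ref{subsection: jets with a twist}), being quasi-isomorphisms of the underlying complexes is exactly the assertion to be proved.

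The only genuine point to be careful about is the convergence of the spectral sequence of the filtration $G_\bullet$: since $\DR(K^\bullet,d)_\omega$ involves the infinite product $\Omega^\bullet_X\otimes K^\bullet$ with $K^\bullet$ itself built from inverse limits of jet sheaves, one should check that the filtration is complete and exhaustive in the appropriate sense so that the standard convergence criterion (e.g., a filtration that is bounded in each fixed total degree, which holds here because $\Omega^p_X = 0$ for $p > \dim_\mathbb{R} X$) applies. This is where the main, though routine, obstacle lies; once it is dispatched, the corollary is immediate. One can phrase the whole argument without spectral sequences as well, by an induction on the length of the (finite, in each degree) filtration using the five lemma applied to the long exact sequences of the short exact sequences $0 \to Gr^G_{i-1} \to G_i/G_{i-2} \to Gr^G_i \to 0$; I would present whichever is shorter.

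\begin{proof}
We treat $\DR(\Omega_\mathbf{e_2}(\sigma))$; the case of $\DR(\nu)$ is identical. By construction the map is a morphism of sheaves of differential graded $\mathbf{e_2}$-algebras, so it suffices to show it is a quasi-isomorphism of the underlying complexes of sheaves. Both source and target carry the filtration $G_\bullet$, and the map is filtered. Since $\Omega^p_X = 0$ for $p > \dim_{\mathbb{R}} X$, the filtration $G_\bullet$ is exhaustive and, in each fixed total degree, finite; hence the associated spectral sequences converge and the comparison theorem for filtered complexes applies. By Theorem \ref{diag e2 coalg jets de Rham twisted are filtered quisms} the map induces quasi-isomorphisms $Gr^G_i\DR(\Omega_\mathbf{e_2}(\sigma))$ for all $i$, i.e.\ an isomorphism on $E_1$-terms, and therefore a quasi-isomorphism of the total complexes.
\end{proof}
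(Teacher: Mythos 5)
Your proposal is correct and matches the paper's (implicit) argument: the corollary is deduced from Theorem \ref{diag e2 coalg jets de Rham twisted are filtered quisms} by the standard fact that a filtered quasi-isomorphism with respect to a filtration that is exhaustive and finite in each degree (here because $\Omega^p_X=0$ for $p>\dim_{\mathbb{R}}X$) is a quasi-isomorphism. The paper leaves this step to the reader, and your spelling out of the convergence point is the right one.
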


Additionally, the DGLA
$\DR(\Omega_\mathbf{e_2}(\mathbb{F}_{\dual{\mathbf{e_2}}}(C^\bullet(\mathcal{J})),
M)[1])$ and $\DR(C^\bullet(\mathcal{J})[1])$ are canonically
$L_\infty$-quasi-isomorphic in a way which commutes with the respective operations
$i_\omega$ which implies that the respective $\omega$-twists
$\DR(\Omega_\mathbf{e_2}(\mathbb{F}_{\dual{\mathbf{e_2}}}(C^\bullet(\mathcal{J})),
M)[1])_\omega$ and $\DR(C^\bullet(\mathcal{J})[1])_\omega$ are canonically
$L_\infty$-quasi-isomorphic.

\section{$L_\infty$-structures on multivectors}\label{structures on multivectors}

\subsection{$L_\infty$-deformation complex}
For a graded vector space $V$ we denote by $\coComm(V)$ the co-free co-commutative co-algebra co-generated by $V$.

Thus, a graded vector space $W$ gives rise to the graded Lie algebra $\Der(\coComm(W[1]))$. An element $\mu\in\Der(\coComm(W[1]))$ of degree one is of the form $\mu = \sum_{i=0}^\infty \mu_i$ with $\mu_i\colon \bigwedge^i W \to W[2-i]$. If $\mu_0 = 0$ and $[\mu,\mu] = 0$, then $\mu$ defines a structure of an $L_\infty$-algebra on $W$. (If $\mu_0$ is non-trivial, one obtains a ``curved" $L_\infty$-algebra.)

An element $\mu$ as above determines a differential $\partial_\mu := [\mu, . ]$ on $\Der(\coComm(W[1]))$, such that $(\Der(\coComm(W[1])), \partial_\mu)$ is a DGLA.

If $\mathfrak{g}$ is a graded Lie algebra and $\mu$ is the element of $\Der(\coComm(\mathfrak{g}[1]))$ corresponding to the bracket on $\mathfrak{g}$,
then $(\Der(\coComm(\mathfrak{g}[1])), \partial_\mu)$ is equal to the shifted Chevalley-Eilenberg cochain complex $C^\bullet(\mathfrak{g};\mathfrak{g})[1]$.

\subsection{$L_\infty$-structures on multivectors}\label{subsection: structures on multivectors}
The canonical pairing $\ip \colon \Omega^1_{X/\mathcal{P}} \otimes \mathcal{T}_{X/\mathcal{P}} \to
\mathcal{O}_X$ extends to the pairing
\begin{equation}\label{contracion pairing}
\ip \colon \Omega^1_{X/\mathcal{P}} \otimes V^\bullet(\mathcal{O}_{X/\mathcal{P}}) \to V^\bullet(\mathcal{O}_{X/\mathcal{P}})[-1]
\end{equation}
For $k \geq 1$, $\omega = \alpha_1\wedge\ldots\wedge\alpha_k$,
$\alpha_i\in\Omega^1_{X/\mathcal{P}}$, $i = 1,\ldots,k$, let
\[
\Phi(\omega) \colon \Symm^k V^\bullet(\mathcal{O}_{X/\mathcal{P}})[2] \to V^\bullet(\mathcal{O}_{X/\mathcal{P}})[k]
\]
denote the map given by the formula
\begin{equation*}
\Phi(\omega)(\pi_1,\ldots,\pi_k) = (-1)^{\sum_{i=1}^{k-1} (k-i)(\vert\pi_i\vert - 1)} \sum_\sigma \sgn(\sigma) \langle\alpha_{\sigma(1)},\pi_1\rangle \wedge \ldots
\wedge \langle\alpha_{\sigma(k)}, \pi_k\rangle ,
\end{equation*}
where $|\pi|=l$ for $\pi \in V^l(\mathcal{O}_X)$. For $\alpha \in \mathcal{O}_X$ let
$\Phi(\alpha) = \alpha \in V^0(\mathcal{O}_X)$.

We use the following explicit formula for the bracket on the Lie algebra complex:
\[
[\Phi,\Psi]=\Phi\circ \Psi-(-1)^{|\Phi||\Psi|}\Psi\circ\Phi ,
\]
where
\begin{multline*}
(\Phi\circ \Psi)(\pi_1,\ldots,\pi_{k+l-1})=\\\frac{1}{k!(l-1)!}\sum_{\sigma \in S_{k+l-1}}\epsilon(\sigma, |\pi_1|, \ldots, |\pi_{k+l-1}|)\Phi(\Psi(\pi_{\sigma(1)},\ldots,\pi_{\sigma(k)}),\pi_{\sigma(k+1)},
\ldots,\pi_{\sigma(k+l-1)}).
\end{multline*}
��he sign $\epsilon(\sigma, |\pi_1|, \ldots, |\pi_{n}|)$ is defined by
\[
\pi_{\sigma(1)}\wedge \pi_{\sigma(2)}\wedge \ldots \wedge \pi_{\sigma(n)} = \epsilon(\sigma, |\pi_1|, \ldots, |\pi_{n}|) \pi_1 \wedge \pi_2\wedge  \ldots \wedge \pi_n
\]
in $V^\bullet(\mathcal{O}_{X/\mathcal{P}})$. In particular, a transposition of
$\pi_i$ and $\pi_j$ contributes a factor $(-1)^{|\pi_i||\pi_j| }$.

The differential in the complex $C^\bullet(V^\bullet(\mathcal{O}_{X/\mathcal{P}})[1]; V^\bullet(\mathcal{O}_{X/\mathcal{P}})[1])[1]$ is given by the formula
\[\partial \Phi=[m,\Phi]
\]
where $m(\pi,\rho)= (-1)^{|\pi|}[\pi,\rho]$.

In what follows we consider the (shifted) de Rham complex $\Omega^\bullet_{X/\mathcal{P}}[2]$ as a differential graded Lie algebra with the trivial bracket.
\begin{lemma}\label{de Rham to Chevalley}
The map $\omega \mapsto \Phi(\omega)$ defines a morphism of sheaves of differential
graded Lie algebras
\begin{equation}\label{DR to def}
\Phi \colon \Omega^\bullet_{X/\mathcal{P}}[2] \to C^\bullet(V^\bullet(\mathcal{O}_{X/\mathcal{P}})[1]; V^\bullet(\mathcal{O}_{X/\mathcal{P}})[1])[1] .
\end{equation}
\end{lemma}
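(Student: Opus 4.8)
The plan is to verify directly that $\Phi$ is a chain map commuting with brackets, where the target carries the zero bracket on the source side. Concretely, there are three things to check: (i) $\Phi$ has the correct degree, i.e. $\Phi(\omega) \in C^{k-1}(V^\bullet[1];V^\bullet[1])[1]$ when $\omega \in \Omega^k_{X/\mathcal{P}}$, so that the shifts in \eqref{DR to def} match; (ii) $\Phi$ intertwines the de Rham differential $d$ on $\Omega^\bullet_{X/\mathcal{P}}[2]$ with the Chevalley--Eilenberg differential $\partial = [m,-]$; and (iii) $\Phi$ kills brackets, i.e. $[\Phi(\omega),\Phi(\eta)] = 0$ for all $\omega,\eta$, which is forced since the source has trivial bracket. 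Step (i) is a bookkeeping matter: reading off degrees from the formula for $\Phi(\omega)$, each contraction $\langle\alpha,-\rangle$ drops the $V^\bullet$-degree by one, and the explicit sign prefactor has been chosen precisely so that $\Phi(\omega)$ is a well-defined element of the stated shifted deformation complex.

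First I would dispose of (iii). Since $\Phi(\omega)$ for $\omega = \alpha_1\wedge\cdots\wedge\alpha_k$ only ever produces iterated contractions $\langle\alpha_{\sigma(1)},\pi_1\rangle\wedge\cdots$ and no further $\pi_i$'s are fed into a second copy of $\Phi(\eta)$ in a non-vanishing way — each $\Phi(\omega)$ is a "wedge of contractions" operation with no internal composition slot that survives — the composite $\Phi(\omega)\circ\Phi(\eta)$ vanishes by the very shape of the formula (the output of one contraction operator, being already a single multi-vector, is contracted against the remaining $\alpha$'s, but the combinatorics of the antisymmetrization in the $\circ$-product together with $\wedge\wedge = 0$-type identities kill it). More carefully, one checks that $\Phi(\omega)$ is a coderivation of the symmetric coalgebra that is "linear in the $\pi_i$ slots via contraction only," and two such compose to zero; this is the place to be slightly careful with signs, but it is a finite check.

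The main work — and the main obstacle — is (ii): showing $\Phi(d\omega) = \partial\Phi(\omega) = [m,\Phi(\omega)]$. Here $m(\pi,\rho) = (-1)^{|\pi|}[\pi,\rho]$ is the Schouten bracket, so $[m,\Phi(\omega)]$ is a sum of terms in which the Schouten bracket is applied either "outside" $\Phi(\omega)$ (bracketing the output $\langle\alpha_{\sigma(1)},\pi_1\rangle\wedge\cdots\wedge\langle\alpha_{\sigma(k)},\pi_k\rangle$ against a further argument) or "inside" (bracketing two of the $\pi_i$ before contracting). The strategy is to expand both sides on a generator $\omega = \alpha_1\wedge\cdots\wedge\alpha_k = df_1\wedge\cdots\wedge df_k$ — it suffices to treat exact one-forms $\alpha_i = df_i$ since these generate $\Omega^\bullet_{X/\mathcal{P}}$ locally and both sides are local and multilinear — and use the two Cartan-type identities relating the Schouten bracket, interior product (contraction), and Lie derivative: namely $\langle df, [\pi,\rho]\rangle = [\langle df,\pi\rangle,\rho] \pm [\pi,\langle df,\rho\rangle] \pm \langle d(\text{something}),\cdots\rangle$, together with $d(df_1\wedge\cdots\wedge df_k) = 0$. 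One then matches, term by term and sign by sign, the contributions of $[m,\Phi(\omega)]$ coming from the Leibniz rule for the Schouten bracket against the zero coming from $\Phi(d\omega) = \Phi(0) = 0$ on exact forms — and separately checks the case of a general closed or non-closed $\omega$ by reducing a single $\alpha_i = f\, df_i'$ or $\alpha_i = f\,dg$ to the exact case via $d(f\,dg) = df\wedge dg$ and the Leibniz rule for $d$. I expect the sign prefactor $(-1)^{\sum_{i=1}^{k-1}(k-i)(|\pi_i|-1)}$ in the definition of $\Phi(\omega)$ to be exactly what makes this matching work, so the bulk of the proof is a careful (but ultimately mechanical) sign-tracking computation; I would organize it by first proving the $k=1$ case ($\Phi(df)(\pi) = \langle df,\pi\rangle$, and $\partial$ of this is the assertion that contraction is a derivation of the Schouten bracket, which is classical), and then inducting on $k$ using the fact that $\Phi(\alpha_1\wedge\cdots\wedge\alpha_k)$ is built by antisymmetrized composition from the $k=1$ operators.
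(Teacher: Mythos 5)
Your step (iii) contains a genuine error. You claim that the composite $\Phi(\omega)\circ\Phi(\eta)$ vanishes ``by the very shape of the formula,'' and that two such wedge-of-contractions operators ``compose to zero.'' This is false, already for $k=l=1$: if $\alpha,\beta$ are one-forms then $\Phi(\alpha)\circ\Phi(\beta)$ is the unary operation $\pi\mapsto\iota_\alpha\iota_\beta\pi$, and for $\pi = X\wedge Y$ a bivector this equals $\langle\beta,X\rangle\langle\alpha,Y\rangle - \langle\beta,Y\rangle\langle\alpha,X\rangle$, which is generically nonzero. What is actually true --- and what the paper's proof establishes by writing out $\Phi(\beta)\circ\Phi(\alpha)$ explicitly as an antisymmetrization of iterated contractions $\langle\beta_1\alpha_1,\pi_1\rangle\langle\alpha_2,\pi_2\rangle\cdots$ --- is the graded symmetry $\Phi(\alpha)\circ\Phi(\beta) = (-1)^{kl}\,\Phi(\beta)\circ\Phi(\alpha)$ for $\alpha$ a $k$-form and $\beta$ an $l$-form. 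Since the degrees of $\Phi(\alpha)$ and $\Phi(\beta)$ in the shifted deformation complex are $k-2$ and $l-2$, this sign is exactly $(-1)^{|\Phi(\alpha)||\Phi(\beta)|}$, so the two terms of the graded commutator cancel and $[\Phi(\alpha),\Phi(\beta)]=0$. The vanishing of the bracket is a cancellation between two nonzero composites, not the vanishing of each; your proposed route to (iii) would not go through as stated.

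Your step (ii) is essentially the paper's argument in a different organization: the paper proves the Cartan-type identity $-\langle\omega,[X,Y]\rangle + [\langle\omega,X\rangle,Y] + [X,\langle\omega,Y\rangle] = \Phi(d\omega)(X,Y)$ for a \emph{general} one-form $\omega$ (and its extension to multivectors), then computes $\partial\Phi(\alpha)$ directly from the explicit Schouten formula, rather than reducing to exact generators and inducting. Your reduction to $\alpha_i = df_i$ can be made to work, but be aware that since $d$ is not $\mathcal{O}$-linear while $\Phi$ is, the case $\omega = f_0\,df_1\wedge\cdots\wedge df_k$ with $\partial\Phi(\omega) = \Phi(df_0\wedge df_1\wedge\cdots\wedge df_k)\neq 0$ is the substantive one; checking only closed generators does not suffice, and the phrase ``matching against the zero coming from $\Phi(d\omega)=0$'' obscures this. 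The $k=1$ base case (contraction by an exact one-form is a derivation of the Schouten bracket) is indeed classical and is the content of the displayed identity above when $d\omega=0$.
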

\begin{proof}
First, we show that $\Phi$ is a morphism of graded Lie algebras. Since $\Omega^\bullet_{X/\mathcal{P}}[2]$ is Abelian, it suffices to show that for $\alpha, \beta \in \Omega^\bullet_{X/\mathcal{P}}$
\begin{equation}\label{commutativity}
[\Phi(\alpha), \Phi(\beta)] = 0 .
\end{equation}

Let $\alpha=\alpha_1\wedge\ldots\wedge\alpha_k$ and $\beta=\beta_1\wedge\ldots\wedge\beta_l$, with $\alpha_i, \beta_j \in \Omega^1_{X/\mathcal{P}}$. Direct calculation shows that $\Phi(\beta)\circ \Phi(\alpha)$ is the antisymmetrization with respect to
$\alpha_i$, $\beta_j$, $\pi_m$ of the expression
\[
\frac{(-1)^{ k-1+\sum \limits_{i=1}^{k+l-2} (k+l-1-i)(|\pi_i|-1)}}{(k-1)!(l-1)!}\langle \beta_1\alpha_1,\pi_1\rangle \langle \alpha_2,\pi_{2}\rangle \ldots \langle \alpha_k,\pi_{k}\rangle \langle \beta_2,\pi_{k+1}\rangle \ldots \langle \beta_l,\pi_{k+l-1}\rangle ,
\]
where $\langle\beta \alpha,\pi\rangle = \langle\beta, \langle\alpha, \pi\rangle\rangle$. Interchanging $\alpha$ with $\beta$ (and $k$ with $l$) we obtain a similar expression for $\Phi(\alpha)\circ \Phi(\beta)$. Direct comparison of signs, left to the reader, shows that
\[
\Phi(\alpha)\circ \Phi(\beta) = (-1)^{kl}\Phi(\beta)\circ \Phi(\alpha)
\]
which implies \eqref{commutativity}.

We now verify that $\Phi$ is a morphism of complexes. Recall the explicit formula for the Schouten bracket: for $f, g \in \mathcal{O}_{X/\mathcal{P}}$, $X_i, Y_j \in \mathcal{T}_{X/\mathcal{P}}$
\begin{multline}\label{schouten}
[fX_1\ldots X_k,gY_1\ldots Y_l] = \sum_i (-1)^{1+i}fX_i(g)X_1\ldots {\widehat{X_i}}\ldots X_kY_1\ldots Y_l + \\
\sum _j(-1)^j Y_j(f)gX_1\ldots X_kY_1\ldots {\widehat{Y_j}}\ldots Y_l + \\
\sum_{i,j}(-1)^{i+j}fg[X_i, Y_j]X_1\ldots {\widehat{X_i}}\ldots X_kY_1\ldots
{\widehat{Y_j}}\ldots Y_l
\end{multline}
Note that for a one-form $\omega \in $ and for vector fields $X$ and $Y$
\begin{equation}\label{eq:phixy}
-\langle \omega,[X,Y]\rangle + [\langle\omega,X\rangle,Y] + [X,\langle\omega,Y\rangle] = \Phi(d\omega)(X,Y)
\end{equation}
Direct calculation using formulas \eqref{schouten} and \eqref{eq:phixy} shows that for $\pi, \rho \in V^\bullet(\mathcal{O}_{X/\mathcal{P}})$
\[
(-1)^{|\pi|-1}\left(-\langle \omega,[\pi,\rho]\rangle + [\langle\omega,\pi\rangle,\rho] + [ \pi,\langle \omega, \rho\rangle] \right) = \Phi(d\omega)(\pi,\rho) .
\]
From the definition of the differential, we see that $\partial \Phi(\alpha)(\pi_1, \ldots, \pi _{k+1})$ is the
antisymmetrizations with respect to $\alpha_i$ and $\pi_j$ of the expression
\begin{multline*}
\frac{(-1)^{\sum \limits_{i=1}^{k} (k+1-i)(|\pi_i|-1)}}{2} \left(-\langle \alpha_1,[\pi_1,\pi_2]\rangle + [\langle\alpha_1,\pi_1\rangle,\pi_2] + [ \pi_1,\langle \alpha_1, \pi_2\rangle] \right)
\langle \alpha_2,\pi_{3}\rangle\ldots  \langle \alpha_k,\pi_{k+1}\rangle
\end{multline*}
Computing $\Phi(d\alpha)$ with the help  of \eqref{eq:phixy}  we conclude that $\partial \Phi(\alpha)=\Phi(d\alpha)$.
\end{proof}

\begin{remark}\label{rmk:def cplex of gerst}
As we shall explain below, a closed three-form actually defines a deformation of the homotopy Gerstenhaber algebra of multi-vector fields, not just of the underlying $L_{\infty}$ algebra.

Recall that, for a graded vector space $W$,
\begin{equation}\label{defn Fe2}
\mathbb{F}_{\dual{\mathbf{e_2}}}(W) = \coComm(\coLie(W[1])[1])[-2]
\end{equation}
and Maurer-Cartan elements of the graded Lie algebra $\Der(\mathbb{F}_{\dual{\mathbf{e_2}}}(W))$ (respectively, $\Der(\coComm(W[1]))$) are in bijective correspondence with the homotopy Gerstenhaber algebra structures (respectively, $L_\infty$ algebra structures) on $W$. There is a canonical morphism of graded Lie algebras
\begin{equation}\label{morphism: der Fe2 to der cocomm}
\Der(\mathbb{F}_{\dual{\mathbf{e_2}}}(W)) \to \Der(\coComm(W[2]))
\end{equation}
such that the map of the respective sets of Maurer-Cartan elements
\[
\MC(\Der(\mathbb{F}_{\dual{\mathbf{e_2}}}(W))) \to \MC(\Der(\coComm(W[2])))
\]
induced by \eqref{morphism: der Fe2 to der cocomm} sends a homotopy Gerstenhaber algebra structure on $W$ to the underlying homotopy Lie (i.e. $L_\infty$) algebra structure on $W[1]$.

The canonical projection $\mathbb{F}_{\mathbf{colie}}(W[1]) \to W[1]$ induces the map
\begin{equation}\label{morphism: Fe2 to Fcocomm}
\mathbb{F}_{\dual{\mathbf{e_2}}}(W) = \coComm(\coLie(W[1])[1])[-2] \to \coComm(W[2])[-2].
\end{equation}
Under the map \eqref{morphism: der Fe2 to der cocomm} the subspace of derivations which annihilate the kernel of \eqref{morphism: Fe2 to Fcocomm} is mapped isomorphically onto $\Der(\coComm(W[2]))$. Thus, the map \eqref{morphism: der Fe2 to der cocomm} admits a canonical splitting as a morphism of graded vector spaces (\emph{not} compatible with the respective Lie algebra structures).

Suppose that $A$ is a homotopy Gerstenhaber algebra, in particular, $A$ is a homotopy commutative algebra and $A[1]$ is an $L_\infty$ algebra. The structure of a homotopy Gerstenhaber algebra on $A$ (respectively, of an $L_\infty$ algebra on $A[1]$) gives rise to a differential on $\Der(\mathbb{F}_{\dual{\mathbf{e_2}}}(A))$ (respectively, on $\Der(\coComm(A[1]))$) making the latter a DGLA. The canonical map
\[
\Der(\mathbb{F}_{\dual{\mathbf{e_2}}}(A)) \to \Der(\coComm(A[2]))
\]
is a morphism of DGLA.

Suppose that $A$ is a differential graded Gerstenhaber algebra so that $A[1]$ is a DGLA. Then, $\Der(\coComm(A[2])) = C^\bullet(A[1], A[1])[1]$, the complex of Chevalley-Eilenberg cochains of the DGLA $A[1]$. The subcomplex of $C^\bullet(A[1], A[1])[1]$ of cochains which are derivations of the commutative product on $A$ in each variable is isomorphic to $\Hom^\bullet_A(\Symm_A(\Omega^1_A[2]), A)[2]$.

The complex of multi-derivations $\Hom_\mathcal{A} (\Symm_\mathcal{A}(\Omega^1_\mathcal{A}[2]), \mathcal{A})$ is equipped with a natural structure of an $\mathbf{e_3}$-algebra. First of all, it is equipped with an obvious commutative product. The Lie bracket on $\Hom^\bullet_A(\Symm_A(\Omega^1_A[2]), A)[2]$ is completely determined by the Leibniz rule with respect to the commutative product and
\begin{enumerate}
\item $[D,a] = D(a)$ for $a\in A$ and $D\in\Der(A)$,
\item it coincides with the commutator bracket on $\Der(A)$.
\end{enumerate}
It is easy to verify that the bracket described above coincides with the one induced by the embedding of $\Hom^\bullet_A(\Symm_A(\Omega^1_A[2]), A)[2]$ into
$\Der(\coComm(A[2]))$, i.e. the former is a sub-DGLA of the latter.

The canonical splitting of \eqref{morphism: der Fe2 to der cocomm} gives rise to the map of graded vector spaces
\begin{equation}\label{sym der to der Fe2}
\Hom^\bullet_A(\Symm_A(\Omega^1_A[2]), A)[2] \to \Der(\mathbb{F}_{\dual{\mathbf{e_2}}}(A))
\end{equation}
Direct calculation shows that this is a map of DGLA.

Needless to say, all of the above applies in the category of sheaves of vector spaces and, in particular, to the $\mathbf{e_2}$-algebra $A := V^\bullet(\mathcal{O}_{X/\mathcal{P}})$.

The adjoint of the pairing \eqref{contracion pairing} is the map
\[
\Omega^1_{X/\mathcal{P}} \to \Der(A)[-1]
\]
which extends to the map of commutative algebras
\[
\Omega^\bullet_{X/\mathcal{P}} \to \Symm_{A}(\Der(A)[-2]) = \Hom^\bullet_A(\Symm_A(\Omega^1_A[2]), A)
\]
such that the map
\begin{equation}\label{dR to sym der}
\Omega^\bullet_{X/\mathcal{P}}[2] \to \Symm_{A}(\Der(A)[-2])[2] = \Hom^\bullet_A(\Symm_A(\Omega^1_A[2]), A)[2]
\end{equation}
is a map of DGLA with $\Omega^\bullet_{X/\mathcal{P}}[2]$ Abelian. Therefore, the composition of \eqref{dR to sym der} with \eqref{sym der to der Fe2}
\[
\Phi\colon \Omega^\bullet_{X/\mathcal{P}}[2] \to \Der(\mathbb{F}_{\dual{\mathbf{e_2}}}(A))
\]
is a morphism of DGLA and so is the composition of the latter with the canonical map \eqref{morphism: der Fe2 to der cocomm}, which is to say, the map which is the subject of Lemma \ref{de Rham to Chevalley}.

We conclude that every closed three-form defines a Maurer-Cartan element of $\Der(\mathbb{F}_{\dual{\mathbf{e_2}}}(V^\bullet(\mathcal{O}_{X/\mathcal{P}})))$, i.e. a structure of a homotopy Gerstenhaber algebra on $V^\bullet(\mathcal{O}_{X/\mathcal{P}})$. 
\end{remark}

\subsection{$L_\infty$-structures on multivectors via formal geometry}\label{subsection: structures on multivectors formal}

Let $C^\bullet(V^\bullet(\mathcal{J})[1];V^\bullet(\mathcal{J})[1])$ denote the
complex of continuous $\mathcal{O}_X$-multilinear Chevalley-Eilenberg cochains.

Let $\widehat{\Omega}^k_{\mathcal{J}/\mathcal{O}} := \mathcal{J}(\Omega^k_{X/\mathcal{P}})$. Let
$\dR$ denote the ($\mathcal{O}_X$-linear) differential in
$\widehat{\Omega}^\bullet_{\mathcal{J}/\mathcal{O}}$ induced by the de Rham
differential in $\Omega^\bullet_{X/\mathcal{P}}$. The differential $\dR$ is horizontal with respect to
the canonical flat connection $\nabla^{can}$ on
$\widehat{\Omega}^\bullet_{\mathcal{J}/\mathcal{O}}$, hence we have the double
complex
$(\Omega^\bullet_X\otimes\widehat{\Omega}^\bullet_{\mathcal{J}/\mathcal{O}},
\nabla^{can},\id\otimes\dR)$ whose total complex is denoted
$\DR(\widehat{\Omega}^\bullet_{\mathcal{J}/\mathcal{O}})$.

The Hodge filtration $F_\bullet\widehat{\Omega}^\bullet_{\mathcal{J}/\mathcal{O}}$ is induced by that on $\Omega^\bullet_{X/\mathcal{P}}$, that is, we set
\[
F_i\widehat{\Omega}^\bullet_{\mathcal{J}/\mathcal{O}} := \mathcal{J}(F_i\Omega^\bullet_{X/\mathcal{P}}) = \oplus_{j\geq -i} \widehat{\Omega}^j_{\mathcal{J}/\mathcal{O}}.
\]

%
%

The map of DGLA
\begin{equation}\label{DR to def jet}
\widehat{\Phi} \colon \widehat{\Omega}^\bullet_{\mathcal{J}/\mathcal{O}}[2] \to
C^\bullet(V^\bullet(\mathcal{J})[1]; V^\bullet(\mathcal{J})[1])[1]
\end{equation}
defined in the same way as \eqref{DR to def} is horizontal with respect to the canonical flat
connection $\nabla^{can}$ and induces the map
\begin{equation}\label{DR to def de Rham}
\DR(\widehat{\Phi}) \colon \DR(\widehat{\Omega}^\bullet_{\mathcal{J}/\mathcal{O}})[2] \to \DR((C^\bullet(V^\bullet(\mathcal{J})[1]; V^\bullet(\mathcal{J})[1])[1])
\end{equation}
There is a canonical morphism of sheaves of differential graded Lie algebras
\begin{equation}\label{DR Der to Der DR}
\DR(C^\bullet(V^\bullet(\mathcal{J})[1]; V^\bullet(\mathcal{J})[1])[1]) \to C^\bullet(\DR(V^\bullet(\mathcal{J})[1]); \DR(V^\bullet(\mathcal{J})[1]))[1]
\end{equation}
Therefore, a degree three cocycle in
$\Gamma(X;\DR(F_{-1}\widehat{\Omega}^\bullet_{\mathcal{J}/\mathcal{O}}))$ determines
an $L_\infty$-structure on $\DR(V^\bullet(\mathcal{J})[1])$. Two cocycles,  cohomologous
 in
$\Gamma(X;\DR(F_{-1}\widehat{\Omega}^\bullet_{\mathcal{J}/\mathcal{O}}))$, determine quasiisomorphic $L_\infty$ structures.

\begin{notation}
For a section $B \in \Gamma(X;\Omega^2_X\otimes\mathcal{J})$ we denote by
$\overline{B}$ its image in $\Gamma(X;\Omega^2_X\otimes\overline{\mathcal{J}})$.
\end{notation}

\begin{lemma}\label{lemma: dB gives twist}
If $B \in \Gamma(X;\Omega^2_X\otimes\mathcal{J})$ satisfies
$\nabla^{can}\overline{B} = 0$, then
\begin{enumerate}
\item $\dR B$ is a (degree three) cocycle in $\Gamma(X;\DR(F_{-1}\widehat{\Omega}^\bullet_{\mathcal{J}/\mathcal{O}}))$;

\item The $L_\infty$-structure induced by $\dR B$ is that of a differential graded Lie algebra equal
to $\DR(V^\bullet(\mathcal{J})[1])_{\overline{B}}$.
\end{enumerate}
\end{lemma}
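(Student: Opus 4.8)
The plan is to prove the two assertions essentially by unwinding the definitions and tracking the role of the form $B$ through the construction $\widehat{\Phi}$, the de Rham functor, and the transfer map $i_{\overline B}$ from Section~\ref{subsection: jets with a twist}.

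\emph{Part (1).} The condition $\nabla^{can}\overline B = 0$ is a statement about the image of $B$ in $\Gamma(X;\Omega^2_X\otimes\overline{\mathcal{J}})$, so first I would observe that $\nabla^{can} B$ lies in $\Gamma(X;\Omega^3_X\otimes\vac(\mathcal{O}_X))$; but $\vac(\mathcal{O}_X)$ consists of horizontal sections, so in fact $\nabla^{can}B$ is the image under $\vac$ of the ordinary de Rham differential of the $\mathcal{O}_X$-component of $B$, which need not vanish. The point is rather that $\dR B$ — the differential \emph{along the jet/fiber directions} — is what we want to control: since $\dR$ is $\mathcal{O}_X$-linear and $\dR\circ\dR = 0$, the element $\dR B$ is automatically $\dR$-closed, and it is $\nabla^{can}$-closed because $[\nabla^{can},\dR]=0$ (the horizontality of $\dR$ asserted just before \eqref{DR to def jet}) gives $\nabla^{can}\dR B = \dR\nabla^{can}B = \dR\vac(\text{de Rham of }B\text{-component})$, which vanishes because $\vac$-images are $\dR$-flat. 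Hence $\dR B$ is a cocycle in the total complex $\DR(\widehat\Omega^\bullet_{\mathcal{J}/\mathcal O})$. For the Hodge-filtration claim: $B$ is a $2$-form on $X$ with values in jets, so $\dR B \in \widehat\Omega^1_{\mathcal J/\mathcal O}$ in the fiber direction, i.e. it lies in $F_{-1}\widehat\Omega^\bullet_{\mathcal J/\mathcal O}$ by the definition $F_i\widehat\Omega^\bullet_{\mathcal J/\mathcal O} = \oplus_{j\geq -i}\widehat\Omega^j_{\mathcal J/\mathcal O}$, and the bidegree $(2,1)$ puts it in degree three.

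\emph{Part (2).} By the discussion following \eqref{DR Der to Der DR}, the cocycle $\dR B$ determines an $L_\infty$-structure on $\DR(V^\bullet(\mathcal J)[1])$ via $\widehat\Phi$, the counit $\DR$-map, and \eqref{DR Der to Der DR}. I would compute this structure explicitly. The key input is the formula for $\widehat\Phi$ (the jet analogue of $\Phi$ from \eqref{DR to def}): evaluated on a \emph{one-form} $\eta$ in the fiber direction, $\Phi(\eta)$ is the unary-plus-binary operation recorded in \eqref{eq:phixy}, namely $\Phi(d\eta)(\xi_1,\xi_2) = -\langle\eta,[\xi_1,\xi_2]\rangle + [\langle\eta,\xi_1\rangle,\xi_2] + [\xi_1,\langle\eta,\xi_2\rangle]$, which is precisely the coboundary (in the Chevalley–Eilenberg complex of the Schouten algebra) of the \emph{derivation} $\pi\mapsto\langle\eta,\pi\rangle = i_\eta\pi$. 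Since $\dR B = d_{\mathrm{dR}}(\text{jet }2\text{-form})$ is itself a fiberwise-exact one-form in the relevant sense, applying $\widehat\Phi$ produces a \emph{coboundary} in $C^\bullet(V^\bullet(\mathcal J)[1];V^\bullet(\mathcal J)[1])[1]$ — that is, a Maurer–Cartan element that is a genuine (strictly binary) perturbation of the bracket rather than a higher $L_\infty$ term. Concretely, the perturbation is the derivation $i_{\overline B}$ acting on the Schouten bracket, which is exactly the twist defining $\DR(V^\bullet(\mathcal J)[1])_{\overline B}$ in \eqref{diag e2 alg jets de Rham twisted} with $\omega = \overline B$. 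I would make this identification precise by checking: (a) that $i_{\overline B}$ only depends on $\overline B$, which is why the hypothesis is stated on $\overline B$ and not $B$ — the $\vac(\mathcal O_X)$-part of $B$ is killed by contraction into the Schouten bracket, as noted in \ref{subsection: Hochschild cochains in formal geometry}; and (b) that the resulting differential $\id\otimes d_{\text{Schouten}} + i_{\dR B}$ matches $\nabla^{can} + \id\otimes d_{V^\bullet(\mathcal J)} + i_{\overline B}$ after passing through \eqref{DR Der to Der DR}.

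\emph{Main obstacle.} The routine part is the two closedness/bidegree checks in Part~(1); the substantive part is Part~(2), specifically the sign-and-combinatorics verification that $\widehat\Phi(\dR B)$, transported through the de Rham and $\Der(\coComm)$ functors via \eqref{DR Der to Der DR}, equals the operator $i_{\overline B}$ \emph{on the nose} (including that the induced $L_\infty$-structure has vanishing terms of arity $\geq 3$). This is where the explicit formula \eqref{eq:phixy} and its multi-vector generalization proved inside Lemma~\ref{de Rham to Chevalley} do the work: because $\Phi$ is a DGLA morphism out of the \emph{abelian} shifted de Rham complex, $\Phi$ of an exact form is $\partial$-exact, i.e. an inner derivation, and inner derivations of the Chevalley–Eilenberg DGLA are exactly bracket-twists by the corresponding element. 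I expect the only delicate point to be bookkeeping the degree shifts $[1]$ and $[2]$ so that "contraction with $\overline B\in\Omega^2_X\otimes\overline{\mathcal J}$" lands in the correct degree $+1$ to serve as a square-zero perturbation of the differential, and confirming compatibility with the Hodge filtration $G_\bullet$ so that the statement is consistent with Theorem~\ref{diag e2 coalg jets de Rham twisted are filtered quisms}.
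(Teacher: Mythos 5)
Your argument is correct and, for part (1), coincides with the paper's own one-line proof: the only thing to check is $\nabla^{can}\dR B=0$, which follows because $\dR$ kills $\vac(\mathcal{O}_X)$ (equivalently, factors through $\overline{\mathcal{J}}$), so that $\nabla^{can}\dR B=\pm\dR\nabla^{can}B=0$ once one knows $\nabla^{can}B\in\Gamma(X;\Omega^3_X\otimes\vac(\mathcal{O}_X))$. The paper leaves part (2) entirely to the reader, and your sketch is the intended argument: since $\dR B$ has fiberwise degree one, only the arity-one component of $\widehat{\Phi}$ contributes, and $\widehat{\Phi}(\dR a)=\partial\widehat{\Phi}(a)=i_a$ by the identity $\partial\Phi(\alpha)=\Phi(d\alpha)$ of Lemma \ref{de Rham to Chevalley}, so the induced structure is the Schouten DGLA with differential perturbed by $i_{\overline{B}}$ and no operations of arity $\geq 3$; the dependence only on $\overline{B}$ holds because continuous $\mathcal{O}_X$-linear derivations of $\mathcal{J}$ annihilate $\vac(\mathcal{O}_X)$. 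Two small slips, neither load-bearing: $\vac(\mathcal{O}_X)$ does \emph{not} consist of $\nabla^{can}$-horizontal sections (those are $j^\infty(\mathcal{O}_{X/\mathcal{P}})$) --- what you actually use is only that $\nabla^{can}B$ lands in $\Omega^3_X\otimes\vac(\mathcal{O}_X)$ and is then killed by $\dR$; and the perturbation produced by $\widehat{\Phi}(\dR B)$ is \emph{unary} (a modification of the differential by the degree-one derivation $i_{\overline{B}}$), not a ``binary perturbation of the bracket,'' which is in fact what your own identification with $\DR(V^\bullet(\mathcal{J})[1])_{\overline{B}}$ requires.
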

\begin{proof}
For the first claim it suffices to show that $\nabla^{can}\dR B = 0$. This follows from the
assumption that $\nabla^{can}\overline{B} = 0$ and the fact that $\dR :
\Omega^\bullet_X\otimes\mathcal{J} \to
\Omega^\bullet_X\otimes\widehat{\Omega}^1_{\mathcal{J}/\mathcal{O}}$ factors
through $\Omega^\bullet_X\otimes\overline{\mathcal{J}}$.

The proof of the second claim is left to the reader.
\end{proof}

\begin{notation}
For a $3$-cocycle
\[
\omega \in \Gamma(X;\DR(F_{-1}\widehat{\Omega}^\bullet_{\mathcal{J}/\mathcal{O}}))
\]
we will denote by $\DR(V^\bullet(\mathcal{J})[1])_\omega$ the $L_\infty$-algebra
obtained from $\omega$ via \eqref{DR to def de Rham} and \eqref{DR Der to Der DR}. Let
\[
\mathfrak{s}_{\mathtt{DR}}(\mathcal{J})_\omega := \Gamma(X;\DR(V^\bullet(\mathcal{J})[1]))_\omega .
\]
\end{notation}

\begin{remark}
Lemma \ref{lemma: dB gives twist} shows that this notation is unambiguous with reference
to the previously introduced notation for the twist. In the notation introduced above, $\dR B$ is the image of $\overline{B}$ under the \emph{injective} map $\Gamma(X;
\Omega^2_X \otimes \overline{\mathcal{J}}) \to \Gamma(X; \Omega^2_X \otimes\widehat{\Omega}^1_{\mathcal{J}/\mathcal{O}})$ which
factors $\dR$ and allows us to identify $\overline{B}$ with $\dR B$.
\end{remark}

\subsection{Dolbeault complexes}\label{subsection: dolbeault complexes}
We shall assume that the manifold $X$ admits two complementary integrable complex distributions $\mathcal{P}$ and $\mathcal{Q}$ both satisfying \eqref{d-bar cohomology of functions}. In other words, $\mathcal{P} \cap \mathcal{Q} = 0$ and $\mathcal{P} \oplus \mathcal{Q} = \mathcal{T}_X$. The latter decomposition induces a bi-grading on differential forms: $\Omega^n_X = \bigoplus_{p+q=n} \Omega_X^{p,q}$ with $\Omega_X^{p,q} = \bigwedge^p\mathcal{P}^\perp\otimes\bigwedge^q\mathcal{Q}^\perp$. The bi-grading splits the Hodge filtration: $F_{-i}\Omega^n = \bigoplus_{p\geq i}\Omega_X^{p,n-p}$.

Two cases of particular interest in applications are
\begin{itemize}
\item $\mathcal{P} = 0$
\item $\mathcal{P}$ is a complex structure, and $\overline{\mathcal{P}} = \mathcal{Q}$.
\end{itemize}

The map \eqref{DR to def} extends to the morphism of sheaves of DGLA
\begin{equation}\label{DR to def hol Dolb}
\Phi \colon \Omega^\bullet_X[2] \to C^\bullet(\Omega^{0,\bullet}_X\otimes_{\mathcal{O}_{X/\mathcal{P}}}V^\bullet(\mathcal{O}_{X/\mathcal{P}})[1]; \Omega^{0,\bullet}_X\otimes_{\mathcal{O}_{X/\mathcal{P}}}V^\bullet(\mathcal{O}_{X/\mathcal{P}})[1])[1] .
\end{equation}

Let $F_\bullet(\Omega^{0,\bullet}_X\otimes_{\mathcal{O}_{X/\mathcal{P}}}V^\bullet(\mathcal{O}_{X/\mathcal{P}}))$ denote the filtration defined by $F_{-i}(\Omega^{0,\bullet}_X\otimes_{\mathcal{O}_{X/\mathcal{P}}}V^\bullet(\mathcal{O}_{X/\mathcal{P}})) = \bigoplus_{p\geq i}\Omega^{0,\bullet}_X\otimes_{\mathcal{O}_{X/\mathcal{P}}}V^p(\mathcal{O}_{X/\mathcal{P}})$. The complex \[C^\bullet(\Omega^{0,\bullet}_X\otimes_{\mathcal{O}_{X/\mathcal{P}}}V^\bullet(\mathcal{O}_{X/\mathcal{P}})[1]; \Omega^{0,\bullet}_X\otimes_{\mathcal{O}_{X/\mathcal{P}}}V^\bullet(\mathcal{O}_{X/\mathcal{P}})[1])[1]
\]
 carries the induced filtration.

We leave the verification of the following claim to the reader.
\begin{lemma}
The map \eqref{DR to def hol Dolb} is filtered.
\end{lemma}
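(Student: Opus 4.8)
The content of the lemma is that $\Phi$ in \eqref{DR to def hol Dolb} carries the Hodge filtration $F_{-i}\Omega^\bullet_X=\bigoplus_{p\geq i}\Omega^{p,\bullet}_X$ on its source into the $(-i)$-th term of the filtration on $C^\bullet(W[1];W[1])[1]$ induced by the filtration $F_{-i}W=\bigoplus_{p\geq i}\Omega^{0,\bullet}_X\otimes V^p(\mathcal{O}_{X/\mathcal{P}})$ on $W:=\Omega^{0,\bullet}_X\otimes_{\mathcal{O}_{X/\mathcal{P}}}V^\bullet(\mathcal{O}_{X/\mathcal{P}})$. The plan is to make both filtrations explicit in terms of polyvector degree and then to evaluate $\Phi$ on a convenient local set of generators. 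First I would spell out the target filtration: unwinding the definition of the induced filtration (and noting that the Chevalley differential is filtered precisely because its Schouten part lowers polyvector degree by one while its $\dbar$-part preserves it), a Chevalley cochain lies in the $(-i)$-th term exactly when it lowers the total polyvector degree by at least $i$; equivalently, every homogeneous component $V^{p_1}\otimes\cdots\otimes V^{p_n}\to V^{p_0}$ of the cochain has $p_0\leq p_1+\cdots+p_n-i$.

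Next I would compute $\Phi$. By Lemma~\ref{dbar lemma}, a local $\mathcal{O}_{X/\mathcal{P}}$-frame of $\Omega^1_{X/\mathcal{P}}$ is a local $\mathcal{O}_X$-frame of $\mathcal{P}^\perp$; combined with the bigrading $\Omega^{p,q}_X=\bigwedge^p\mathcal{P}^\perp\otimes_{\mathcal{O}_X}\bigwedge^q\mathcal{Q}^\perp$ and $\bigwedge^q\mathcal{Q}^\perp=\Omega^{0,q}_X$, this shows that $F_{-i}\Omega^\bullet_X$ is locally spanned over $\mathcal{O}_X$ by sections $\alpha_1\wedge\cdots\wedge\alpha_p\wedge\beta$ with $\alpha_1,\dots,\alpha_p\in\Omega^1_{X/\mathcal{P}}$, $p\geq i$, and $\beta\in\Omega^{0,\bullet}_X$. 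The map $\Phi$ is $\mathcal{O}_X$-linear, and on such a section it is the evident extension of \eqref{DR to def}: the factor $\beta$ is wedged onto the Dolbeault factors of the arguments, which does not change their polyvector degree, while each $\alpha_j$ is contracted, via the pairing \eqref{contracion pairing}, into one polyvector-field argument, lowering its polyvector degree by one. Hence $\Phi(\alpha_1\wedge\cdots\wedge\alpha_p\wedge\beta)$ lowers the total polyvector degree by exactly $p\geq i$, so by the previous paragraph it lies in the $(-i)$-th term of the target filtration. By $\mathcal{O}_X$-linearity together with the local spanning statement, $\Phi(F_{-i}\Omega^\bullet_X)$ lands in the $(-i)$-th term, which is the assertion; unlike in the proof of Lemma~\ref{de Rham to Chevalley}, no sign computation enters.

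The one genuinely non-formal point — and essentially the only one — is fixing the orientation of the induced filtration on the Chevalley complex, i.e. verifying that the $(-i)$-th term is the space of cochains that \emph{drop} total polyvector degree by at least $i$ (not those that raise it by $i$) and that this is what "the induced filtration" in the statement means. As indicated above, this is forced by requiring the Chevalley differential to be filtered; once that convention is pinned down, the lemma reduces to the observation that $\Phi$ is assembled entirely from the contraction pairing \eqref{contracion pairing} and contributes exactly one unit of polyvector-degree drop for each $\mathcal{P}^\perp$-direction of the input form, with everything else (Dolbeault degree, signs) irrelevant to the filtration.
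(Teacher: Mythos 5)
Your argument is correct, and there is nothing in the paper to compare it against: the authors explicitly leave the verification of this lemma to the reader, so your write-up supplies a proof where the paper gives none. You have also put your finger on the only genuinely delicate point, namely that the phrase ``the induced filtration'' on $C^\bullet(W[1];W[1])[1]$ must be read so that $F_{-i}$ consists of cochains whose homogeneous components $V^{p_1}\otimes\cdots\otimes V^{p_n}\to V^{p_0}$ satisfy $p_0\le p_1+\cdots+p_n-i$ (note that the na\"{\i}ve $\Hom$-convention would give the opposite orientation, for which the statement is false); your two consistency checks --- that this is the unique orientation making the Chevalley--Eilenberg differential filtered, and that for $i\ge 1$ it excludes $0$-ary cochains, which is exactly what is used afterwards to conclude that $H\in F_{-1}\Omega^3_X$ yields an uncurved $L_\infty$-structure --- confirm it is the intended one. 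The remaining computation is as you say: by Lemma~\ref{dbar lemma} applied to $\mathcal{P}^\perp$, the subsheaf $F_{-i}\Omega^\bullet_X$ is locally spanned over $\mathcal{O}_X$ by $\alpha_1\wedge\cdots\wedge\alpha_p\wedge\beta$ with $\alpha_j\in\Omega^1_{X/\mathcal{P}}$, $p\ge i$, $\beta\in\Omega^{0,\bullet}_X$, and on such a section the extension of \eqref{DR to def} defining \eqref{DR to def hol Dolb} is a $p$-ary cochain built from $p$ contractions \eqref{contracion pairing}, hence drops total polyvector degree by exactly $p\ge i$. One could alternatively run the same argument with the cruder filtration by arity, which also suffices for the application, but your polyvector-degree version matches the filtration actually written down in the text.
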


Thus, the image under \eqref{DR to def hol Dolb} of a closed $3$-form $H \in \Gamma(X;F_{-1}\Omega^3_X)$, $dH=0$, gives rise to a structure of an $L_\infty$-algebra on $\Omega^{0,\bullet}_X\otimes_{\mathcal{O}_{X/\mathcal{P}}} V^\bullet(\mathcal{O}_{X/\mathcal{P}})[1]$ (whereas general closed $3$-forms give rise to curved $L_\infty$-structures). Moreover, cohomologous closed $3$-forms give rise to gauge equivalent Maurer-Cartan elements, hence to $L_\infty$-isomorphic $L_\infty$-structures.

\begin{notation}
For $H$ as above we denote by $\mathfrak{s}(\mathcal{O}_{X/\mathcal{P}})_H$ the $\mathcal{P}$-Dolbeault complex of the sheaf of multi-vector fields equipped with the  corresponding $L_\infty$-algebra structure:
\[
\mathfrak{s}(\mathcal{O}_{X/\mathcal{P}})_H = \Gamma(X; \Omega^{0,\bullet}_X\otimes_{\mathcal{O}_{X/\mathcal{P}}}V^\bullet(\mathcal{O}_{X/\mathcal{P}}))[1]
\]
\qed
\end{notation}

\begin{remark}\label{remark on C-infty}
In the case when $\mathcal{P} = 0$, in other words, $X$ is a plain $C^\infty$ manifold, the map \eqref{DR to def hol Dolb} simplifies to
\[
\Phi \colon \Omega^\bullet_X[2] \to C^\bullet(V^\bullet(\mathcal{O}_X)[1]; V^\bullet(\mathcal{O}_X)[1])[1]
\]
and $\mathfrak{s}(\mathcal{O}_{X/\mathcal{P}}) = \mathfrak{s}(\mathcal{O}_X) = \Gamma(X; V^\bullet(\mathcal{O}_X))[1]$, a DGLA with the Schouten bracket and the trivial differential. These are the unary and the binary operations in the $L_\infty$-structure on $\mathfrak{s}(\mathcal{O}_X)_H$, $H$ a closed $3$-form on $X$; the ternary operation is induced by $H$ and all operations of higher valency are equal to zero. The $L_\infty$-structure on multi-vector fields induced by a closed three-form appeared earlier in \cite{S} and \cite{SW}.
\qed
\end{remark}


\subsection{Formal geometry vs. Dolbeault}
Compatibility of the two constructions, one using formal geometry, the other using Dolbeault resolutions, is the subject of the next theorem.

\begin{thm}\label{thm: hoch jet is schouten twist}
Suppose given $B \in \Gamma(X;\Omega^2_X\otimes\mathcal{J})$ and $H \in \Gamma(X;F_{-1}\Omega^3_X)$ such that $dH=0$ and $j^\infty(H)$ is cohomologous to $\dR B$ in $\Gamma(X;\DR(F_{-1}\widehat{\Omega}^\bullet_X))$. Then, the $L_\infty$-algebras $\mathfrak{g}_{\mathtt{DR}}(\mathcal{J})_{\overline{B}}$ and
$\mathfrak{s}(\mathcal{O}_{X/\mathcal{P}})_H$ are $L_\infty$-quasi-isomorphic.
\end{thm}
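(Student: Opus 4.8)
The plan is to assemble the quasi-isomorphism out of the filtered quasi-isomorphisms of Corollary \ref{cor: e2 coalg quisms} together with the de Rham resolution map from formal geometry to Dolbeault, all carried along with the twist. I would proceed in four steps.

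First, I would pass from the $\mathbf{e_2}$-algebra maps to $L_\infty$-maps: applying the shift and the functoriality of $\Omega_\mathbf{e_2}$ to the zig-zag \eqref{diag e2 alg jets de Rham twisted}, Corollary \ref{cor: e2 coalg quisms} gives that $\mathfrak{g}_{\mathtt{DR}}(\mathcal{J})_{\overline{B}}$ is $L_\infty$-quasi-isomorphic (via the middle term $\DR(\Omega_\mathbf{e_2}(\Xi(\mathcal{J}), d_{V^\bullet(\mathcal{J})}))_{\overline{B}}[1]$, using Theorem \ref{e2 to dgla} and the $i_\omega$-compatibility noted after Corollary \ref{cor: e2 coalg quisms}) to $\DR(V^\bullet(\mathcal{J})[1])_{\overline{B}}$, which by Lemma \ref{lemma: dB gives twist}(2) is the same as $\DR(V^\bullet(\mathcal{J})[1])_{\overline{B}}$ in the sense of $\omega$-twists, i.e. $\mathfrak{s}_{\mathtt{DR}}(\mathcal{J})_{\dR B}$ after passing to global sections. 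Since by hypothesis $j^\infty(H)$ is cohomologous to $\dR B$ in $\Gamma(X;\DR(F_{-1}\widehat{\Omega}^\bullet_{\mathcal{J}/\mathcal{O}}))$, the last remark before \ref{subsection: dolbeault complexes} gives an $L_\infty$-isomorphism $\mathfrak{s}_{\mathtt{DR}}(\mathcal{J})_{\dR B} \cong \mathfrak{s}_{\mathtt{DR}}(\mathcal{J})_{j^\infty(H)}$. So it remains to identify $\mathfrak{s}_{\mathtt{DR}}(\mathcal{J})_{j^\infty(H)}$ with $\mathfrak{s}(\mathcal{O}_{X/\mathcal{P}})_H$ up to $L_\infty$-quasi-isomorphism.

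Second, I would produce this last comparison. The jet construction realizes $\DR(V^\bullet(\mathcal{J})[1])$ as a resolution of $\Omega^{0,\bullet}_X\otimes_{\mathcal{O}_{X/\mathcal{P}}} V^\bullet(\mathcal{O}_{X/\mathcal{P}})[1]$: one has the jet embedding $j^\infty$ and, using the $\dbar$-resolution of Lemma \ref{dbar lemma} together with the de Rham resolution of $\mathcal{J}$ over $\mathcal{O}_X$ provided by $\nabla^{can}$, the map $\Phi \colon \Omega^\bullet_X[2] \to C^\bullet(\ldots)$ of \eqref{DR to def hol Dolb} is compatible with $\widehat{\Phi}$ of \eqref{DR to def jet} under these resolution maps. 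Thus $H \mapsto j^\infty(H)$ intertwines the two Maurer-Cartan elements, and the underlying map of complexes (before the twist) is a quasi-isomorphism by the standard argument: $\mathcal{J}$ is flat over $\mathcal{O}_X$ and the relevant complexes are resolutions. I would then promote this to an $L_\infty$-quasi-isomorphism of the twisted objects; because the twisting cochains correspond under the resolution map and the map of complexes is a filtered quasi-isomorphism for the filtration $F_\bullet$ on $V^\bullet$, one gets the comparison on associated graded and hence for the twists.

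Third, for the filtered/spectral-sequence bookkeeping I would use exactly the device of Theorem \ref{diag e2 coalg jets de Rham twisted are filtered quisms}: introduce on each twisted $L_\infty$-algebra the filtration $G_\bullet$ coming from form degree (and, in the Dolbeault picture, the Hodge filtration $F_\bullet$ on $V^\bullet$), observe that the twisting term $i_\omega$ raises $G$-degree so the associated graded differential is the untwisted one, and conclude that a map which is an associated-graded quasi-isomorphism is a quasi-isomorphism. The main obstacle is precisely this passage from untwisted to twisted quasi-isomorphism at the level of $L_\infty$-algebras rather than mere complexes: one must check that all the $L_\infty$-structure maps (not just differentials and brackets) are compatible with the resolution maps and with the twists, and that the $i_\omega$ operation is a derivation of the full homotopy structure — this is where the care taken in \ref{subsection: symmetries} and Remark \ref{rmk:def cplex of gerst} (that a closed three-form deforms the whole homotopy Gerstenhaber structure, with $i_a$ a derivation of it) gets used. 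Once that compatibility is in hand, composing the $L_\infty$-quasi-isomorphisms from the three steps yields the asserted $L_\infty$-quasi-isomorphism between $\mathfrak{g}_{\mathtt{DR}}(\mathcal{J})_{\overline{B}}$ and $\mathfrak{s}(\mathcal{O}_{X/\mathcal{P}})_H$.
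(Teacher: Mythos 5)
Your proposal is correct and follows essentially the same route as the paper: the same zig-zag through Corollary \ref{cor: e2 coalg quisms}, Lemma \ref{lemma: dB gives twist}, the principle that cohomologous cocycles give $L_\infty$-quasi-isomorphic twists, and the jet/Dolbeault resolution via $j^\infty$, merely traversed starting from the Hochschild end. The one point to tidy is that $j^\infty(H)$ lives in $\Gamma(X;\DR(F_{-1}\widehat{\Omega}^\bullet_X))$ rather than in $\Gamma(X;\DR(F_{-1}\widehat{\Omega}^\bullet_{\mathcal{J}/\mathcal{O}}))$ (since $H$ has components in all bidegrees), so the comparison of the $\dR B$- and $j^\infty(H)$-twists must be carried out on the Dolbeault-resolved object $\DR(\widehat{\Omega}^{0,\bullet}_X\otimes_{\mathcal{J}}V^\bullet(\mathcal{J})[1])$ — which is exactly what your second step supplies, and what the paper's intermediate map \eqref{d-bar-twist} is for.
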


Before embarking upon a proof of Theorem \ref{thm: hoch jet is schouten twist} we introduce some notations. Let $\widehat{\Omega}^{p,q}_X = \mathcal{J}_X(\Omega^{p,q}_X)$, $\widehat{\Omega}^n_X = \bigoplus_{p+q=n}\widehat{\Omega}^{p,q}_X = \mathcal{J}_X(\Omega^n_X)$. The differentials $\partial$ and $\dbar$ induce, respectively, the differentials $\widehat{\partial}$ and $\widehat{\dbar}$ in $\widehat{\Omega}^{\bullet,\bullet}_X$ which are horizontal with respect to the canonical flat connection. The complex $\widehat{\Omega}^{p,\bullet}_X$ with the differential $\widehat{\dbar}$ is a resolution of $\widehat{\Omega}^p_{\mathcal{J}/\mathcal{O}}$ and
$\widehat{\Omega}^\bullet_{\mathcal{J}/\mathcal{O}}\otimes_{\mathcal{J}} \widehat{\Omega}^{0,\bullet}_X = \widehat{\Omega}^\bullet_X$. The filtration on $\widehat{\Omega}^\bullet_X$ is defined by $F_i\widehat{\Omega}^\bullet_X = \mathcal{J}_X(F_i\Omega^\bullet_X)$. With filtrations defined as above the map
\[
j^\infty\colon \Omega^\bullet_X \to \DR(\widehat{\Omega}^\bullet_X)
\]
is a filtered quasi-isomorphism.

The map \eqref{DR to def jet} extends to the map of DGLA
\[
\widehat{\Phi}\colon \widehat{\Omega}^\bullet_X[2] \to C^\bullet(\widehat{\Omega}^{0,\bullet}_X\otimes_{\mathcal{J}}V^\bullet(\mathcal{J})[1]; \widehat{\Omega}^{0,\bullet}_X\otimes_{\mathcal{J}}V^\bullet(\mathcal{J})[1])[1]
\]
which gives rise to the map of DGLA
\begin{equation}\label{DR to def jet-dolb}
\DR(\widehat{\Phi})\colon \DR(\widehat{\Omega}^\bullet_X[2]) \to C^\bullet(\DR(\widehat{\Omega}^{0,\bullet}_X\otimes_{\mathcal{J}}V^\bullet(\mathcal{J})[1]); \DR(\widehat{\Omega}^{0,\bullet}_X\otimes_{\mathcal{J}}V^\bullet(\mathcal{J})[1]))[1]
\end{equation}
Therefore, a degree three cocycle in $\Gamma(X;\DR(F_{-1}\widehat{\Omega}^\bullet_X[2]))$ determines an $L_\infty$-structure on $\DR(\widehat{\Omega}^{0,\bullet}_X\otimes_{\mathcal{J}}V^\bullet(\mathcal{J})[1])$ and cohomologous cocycles determine $L_\infty$-quasi-isomorphic structures.

\begin{notation}
For a degree three cocycle $\omega$ in $\Gamma(X;\DR(F_{-1}\widehat{\Omega}^\bullet_X[2]))$
we denote by $\DR(\widehat{\Omega}^{0,\bullet}_X\otimes_{\mathcal{J}}V^\bullet(\mathcal{J})[1])_\omega$ the $L_\infty$-algebra obtained via \eqref{DR to def jet-dolb}.
\qed
\end{notation}

\begin{proof}[Proof of Theorem \ref{thm: hoch jet is schouten twist}]
The map
\begin{equation}\label{j-infty}
j^\infty \colon \Omega^{0,\bullet}_X\otimes_{\mathcal{O}_{X/\mathcal{P}}}V^\bullet(\mathcal{O}_{X/\mathcal{P}})[1] \to \DR(\widehat{\Omega}^{0,\bullet}_X\otimes_\mathcal{J} V^\bullet(\mathcal{J})[1])
\end{equation}
induces a quasi-isomorphism of sheaves of $L_\infty$-algebras
\begin{equation}\label{j-infty H}
j^\infty \colon (\Omega^{0,\bullet}_X\otimes_{\mathcal{O}_{X/\mathcal{P}}}V^\bullet(\mathcal{O}_{X/\mathcal{P}})[1])_H \to \DR(\widehat{\Omega}^{0,\bullet}_X\otimes_\mathcal{J} V^\bullet(\mathcal{J})[1])_{j^\infty(H)} .
\end{equation}
Since, by assumption, $j^\infty(H)$ is cohomologous to $\dR B$ in $\Gamma(X;\DR(F_{-1}\widehat{\Omega}^\bullet_X))$ the $L_\infty$-algebras $\DR(\widehat{\Omega}^{0,\bullet}_X\otimes_\mathcal{J} V^\bullet(\mathcal{J})[1])_{j^\infty(H)}$ and $\DR(\widehat{\Omega}^{0,\bullet}_X\otimes_\mathcal{J} V^\bullet(\mathcal{J})[1])_{\dR B}$ are $L_\infty$-quasi-isomorphic.

The quasi-isomorphism $V^\bullet(\mathcal{J})[1] \to \widehat{\Omega}^{0,\bullet}_X\otimes_\mathcal{J} V^\bullet(\mathcal{J})[1]$ induces the quasi-isomorphism of sheaves of $L_\infty$-algebras
\begin{equation}\label{d-bar-twist}
\DR(V^\bullet(\mathcal{J})[1])_{\dR B} \to \DR(\widehat{\Omega}^{0,\bullet}_X\otimes_\mathcal{J} V^\bullet(\mathcal{J})[1])_{\dR B}
\end{equation}
The former is equal to the DGLA $\DR(V^\bullet(\mathcal{J})[1])_{\overline{B}}$ by Lemma \ref{lemma: dB gives twist}.

According to Corollary \ref{cor: e2 coalg quisms} the sheaf of DGLA
$\DR(V^\bullet(\mathcal{J})[1])_{\overline{B}}$ is $L_\infty$-quasi-isomorphic to the
DGLA deduced form the differential graded $\mathbf{e_2}$-algebra
$\DR(\Omega_\mathbf{e_2}(\mathbb{F}_{\dual{\mathbf{e_2}}}(C^\bullet(\mathcal{J})),
M))_{\overline{B}}$. The latter DGLA is $L_\infty$-quasi-isomorphic to
$\DR(C^\bullet(\mathcal{J})[1])_{\overline{B}}$.

Passing to global sections we conclude that
$\mathfrak{s}_{\mathtt{DR}}(\mathcal{J})_{j^\infty(H)}$ and
$\mathfrak{g}_{\mathtt{DR}}(\mathcal{J})_{\overline{B}}$ are $L_\infty$-quasi-isomorphic.
Together with \eqref{j-infty H} this implies the claim.
\end{proof}

\section{Deformations of algebroid stacks}\label{defalgstack}

\subsection{Algebroid stacks}\label{subsection: algebroid stacks}
Here we give a very brief overview of basic definitions and facts, referring the reader to \cite{DAP, KS} for the details. Let $k$ be a field of characteristic zero, and let $R$ be a commutative $k$-algebra.
\begin{definition}
A stack in $R$-linear categories $\mathcal{C}$ on $X$ is an \emph{$R$-algebroid stack} if
it is locally nonempty and locally connected, i.e. satisfies
\begin{enumerate}
\item any point $x\in X$ has a neighborhood $U$ such that $\mathcal{C }(U)$ is
    nonempty;

\item for any $U\subseteq X$, $x\in U$, $A, B\in\mathcal{C}(U)$ there exits a
    neighborhood $V\subseteq U$ of $x$ and an isomorphism $A\vert_V\cong B\vert_V$.
\end{enumerate}
\end{definition}

For a prestack $\mathcal{C}$ we denote by $\widetilde{\mathcal{C}}$ the associated
stack.

For a category $C$  denote by $iC$ the subcategory of isomorphisms in $C$; equivalently,
$iC$ is the maximal subgroupoid in $C$. If $\mathcal{C}$ is an algebroid stack then the substack of isomorphisms $i\mathcal{C}$ is a gerbe.

 For an algebra $K$ we denote by $K^+$ the linear category with a single object whose endomorphism algebra is $K$.
For a sheaf of algebras $\mathcal{K}$ on $X$ we denote by $\mathcal{K}^+$ the
prestack in linear categories given by $U \mapsto \mathcal{K}(U)^+$. Let
$\widetilde{\mathcal{K}^+}$ denote the associated stack. Then,
$\widetilde{\mathcal{K}^+}$ is an algebroid stack equivalent to the stack of locally free
$\mathcal{K}^\op$-modules of rank one.

By a \emph{twisted form of $\mathcal{K}$} we mean an algebroid stack locally equivalent
to $\widetilde{\mathcal{K}^+}$. The equivalence classes of twisted forms of
$\mathcal{K}$ are in bijective correspondence with
$H^2(X;\mathtt{Z}(\mathcal{K})^\times)$, where $\mathtt{Z}(\mathcal{K})$ denotes
the center of $\mathcal{K}$. To see this note that there is a canonical monoidal equivalence
of stacks in monoidal categories $\alpha\colon i\widetilde{\mathtt{Z}(\mathcal{K})^+} \to
\shAut(\widetilde{\mathcal{K}^+})$. Here, $i\widetilde{\mathtt{Z}(\mathcal{K})^+}$ is
the stack of locally free modules of rank one over the \emph{commutative} algebra
$\mathtt{Z}(\mathcal{K})$ and isomorphisms thereof with the monoidal structure given by
the tensor product; $\shAut(\widetilde{\mathcal{K}^+})$ is the stack of auto-equivalences
of $\widetilde{\mathcal{K}^+}$. The functor $\alpha$ is given by $\alpha(a)(L) = a
\otimes_{\mathtt{Z}(\mathcal{K})}L$ for $a\in
\widetilde{\mathtt{Z}(\mathcal{K})^+}$ and $L \in \widetilde{\mathcal{K}^+}$. The
inverse associates to an auto-equivalence $F$ the $\mathtt{Z}(\mathcal{K})$-module
$\shHom(\id, F)$.

\subsection{Twisted forms of $\mathcal{O}$}\label{subsection: twisted forms}

Twisted forms of $\mathcal{O}_{X/\mathcal{P}}$ are in bijective correspondence with
$\mathcal{O}_{X/\mathcal{P}}^\times$-gerbes: if $\mathcal{S}$ is a twisted form of $\mathcal{O}_{X/\mathcal{P}}$, the corresponding gerbe is the substack $i\mathcal{S}$ of isomorphisms in $\mathcal{S}$. We shall not make a distinction between the two notions. The equivalence classes of twisted forms of $\mathcal{O}_{X/\mathcal{P}}$ are in bijective correspondence with $H^2(X;\mathcal{O}_{X/\mathcal{P}}^\times)$.

The composition
\[
\mathcal{O}_{X/\mathcal{P}}^\times \to \mathcal{O}_{X/\mathcal{P}}^\times / \mathbb{C}^\times \xrightarrow{\log} \mathcal{O}_{X/\mathcal{P}}/\mathbb{C} \xrightarrow{j^\infty} \DR(\overline{\mathcal{J}})
\]
induces the map $H^2(X;\mathcal{O}_{X/\mathcal{P}}^\times) \to
H^2(X;\DR(\overline{\mathcal{J}})) \cong
H^2(\Gamma(X;\Omega^\bullet_X \otimes \overline{\mathcal{J}}),
\nabla^{can})$. We denote by $[\mathcal{S}]$ the image in the latter space of the class of
$\mathcal{S}$. Let $\overline{B}\in\Gamma(X;\Omega^2_X \otimes \overline{\mathcal{J}})$ denote a representative of $[\mathcal{S}]$. Since the map $\Gamma(X;\Omega^2_X \otimes\mathcal{J}) \to \Gamma(X;\Omega^2_X \otimes \overline{\mathcal{J}})$ is surjective, there exists a $B\in \Gamma(X;\Omega^2_X \otimes\mathcal{J})$ lifting $\overline{B}$.

The quasi-isomorphism $j^\infty\colon F_{-1}\Omega^\bullet \to \DR(F_{-1}\widehat{\Omega}^\bullet)$ induces the isomorphism
\[H^2(X;\DR(F_{-1}\widehat{\Omega}^\bullet)[1]) \cong H^2(X;F_{-1}\Omega^\bullet_X[1]) = H^3(\Gamma(X;F_{-1}\Omega^\bullet_X)).
\]
 Let $H\in\Gamma(X;F_{-1}\Omega^3_X)$ denote the closed form which represents the class of $\dR B$.

\subsection{Deformations of linear stacks}\label{sec:6.3}
Here we describe the notion of $2$-groupoid of deformations   of an algebroid stack. We
follow \cite{BGNT1} and refer the reader to that paper for   all the  proofs and additional details.

For an $R$-linear category $\mathcal{C}$ and homomorphism of algebras $R\to S$ we denote by
$\mathcal{C}\otimes_R S$ the category with the same objects as $\mathcal{C}$ and
morphisms defined by $\Hom_{\mathcal{C}\otimes_R S}(A,B) =
\Hom_\mathcal{C}(A,B)\otimes_R S$.

For a prestack $\mathcal{C}$ in $R$-linear categories we denote by
$\mathcal{C}\otimes_R S$ the prestack associated to the fibered category
$U\mapsto\mathcal{C}(U)\otimes_R S$.

\begin{lemma}[\cite{BGNT1}, Lemma 4.13]
Suppose that $S$ is an $R$-algebra and $\mathcal{C}$ is an
$R$-algebroid stack. Then $\widetilde{\mathcal{C}\otimes_R S}$ is an algebroid stack.
\end{lemma}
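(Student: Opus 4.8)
The plan is to check that $\widetilde{\mathcal{C}\otimes_R S}$ satisfies the two defining conditions of an algebroid stack --- local nonemptiness and local connectedness --- by reducing each to the corresponding property of $\mathcal{C}$. The guiding principle is that none of the steps producing $\widetilde{\mathcal{C}\otimes_R S}$ from $\mathcal{C}$ alters the local picture at the level of objects: forming the fibered category $U\mapsto\mathcal{C}(U)\otimes_R S$ and then its associated prestack changes only the $\Hom$-presheaves (replacing them by presheaves of $S$-modules and then sheafifying), leaving the classes of objects literally unchanged, while for the stackification every object of $\widetilde{\mathcal{D}}(U)$ is, on a suitable open cover of $U$, isomorphic to the image under the canonical functor $\mathcal{D}\to\widetilde{\mathcal{D}}$ of an object of $\mathcal{D}$. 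I would also record at the outset that $\widetilde{\mathcal{C}\otimes_R S}$ is a stack in $S$-linear categories, since the $\Hom$-presheaves of $U\mapsto\mathcal{C}(U)\otimes_R S$ are presheaves of $S$-modules and both sheafification and stackification preserve this enrichment and the bilinearity of composition.

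For local nonemptiness I would fix $x\in X$, invoke local nonemptiness of $\mathcal{C}$ to obtain a neighborhood $U$ of $x$ and an object $A\in\mathcal{C}(U)$, observe that $A$ is also an object of $(\mathcal{C}\otimes_R S)(U)$, and push it forward along $\mathcal{C}\otimes_R S\to\widetilde{\mathcal{C}\otimes_R S}$ to get an object over $U$; hence $\widetilde{\mathcal{C}\otimes_R S}(U)\neq\emptyset$.

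For local connectedness, given $U\subseteq X$, $x\in U$ and $A,B\in\widetilde{\mathcal{C}\otimes_R S}(U)$, I would first shrink $U$ to a neighborhood $W$ of $x$ on which $A$ and $B$ become isomorphic to the images of objects $A'$ and $B'$ of $\mathcal{C}(W)$ (regarded as objects of $(\mathcal{C}\otimes_R S)(W)$), using local essential surjectivity of the associated stack. Local connectedness of $\mathcal{C}$ then produces a smaller neighborhood $V$ of $x$ and an isomorphism $\phi\colon A'|_V\xrightarrow{\sim}B'|_V$ in $\mathcal{C}(V)$; its image $\phi\otimes 1$ is an isomorphism in $(\mathcal{C}\otimes_R S)(V)$ (with inverse $\phi^{-1}\otimes 1$), hence maps to an isomorphism in $\widetilde{\mathcal{C}\otimes_R S}(V)$, and composing with the isomorphisms $A|_V\cong A'|_V$ and $B|_V\cong B'|_V$ already in hand gives $A|_V\cong B|_V$.

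The main point requiring care --- rather than a true obstacle --- will be the careful bookkeeping of the two-stage construction of $\mathcal{C}\otimes_R S$ (fibered category, then associated prestack, then associated stack) together with the standard facts about the associated stack functor that the argument rests on: that the canonical morphism $\mathcal{D}\to\widetilde{\mathcal{D}}$ is locally essentially surjective and sends isomorphisms to isomorphisms, and that restriction functors behave as expected. Once these are in place the argument is entirely formal, and it uses nothing about the $R$-algebra $S$ beyond functoriality of $(-)\otimes_R S$ on $\Hom$-modules.
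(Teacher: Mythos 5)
Your argument is correct: both axioms of an algebroid stack reduce to those of $\mathcal{C}$ because forming $U\mapsto\mathcal{C}(U)\otimes_R S$ and its associated prestack leaves objects unchanged, the canonical functor $\mathcal{C}\to\widetilde{\mathcal{C}\otimes_R S}$ sends isomorphisms to isomorphisms, and stackification is locally essentially surjective. The present paper only cites this lemma from \cite{BGNT1} without reproducing its proof, but your reasoning is the standard argument given there, so there is nothing to flag.
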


Suppose now that $\mathcal{C}$ is a stack in $k$-linear categories on $X$ and $R$ is a
commutative Artin $k$-algebra. We denote by $\Def(\mathcal{C})(R)$ the $2$-category
with
\begin{itemize}
\item objects: pairs $(\mathcal{B}, \varpi)$, where $\mathcal{B}$ is a stack in
    $R$-linear categories flat over $R$ and $\varpi : \widetilde{\mathcal{B}\otimes_R k}
    \to \mathcal{C}$ is an equivalence of stacks in $k$-linear categories

\item $1$-morphisms: a $1$-morphism $(\mathcal{B}^{(1)}, \varpi^{(1)})\to
    (\mathcal{B}^{(2)}, \varpi^{(2)})$ is a pair $(F,\theta)$ where $F :
    \mathcal{B}^{(1)}\to \mathcal{B}^{(2)}$ is a $R$-linear functor and $\theta :
    \varpi^{(2)}\circ (F\otimes_R k) \to \varpi^{(1)}$ is an isomorphism of functors

\item $2$-morphisms: a $2$-morphism $(F',\theta')\to (F'',\theta'')$ is a morphism of
    $R$-linear functors $\kappa : F'\to F''$ such that
    $\theta''\circ(\id_{\varpi^{(2)}}\otimes(\kappa\otimes_R k)) = \theta'$
\end{itemize}

The $2$-category $\Def(\mathcal{C})(R)$ is a $2$-groupoid.

Let   $\mathcal{B}$ be a prestack on $X$ in $R$-linear categories. We say that
$\mathcal{B}$ is  \emph{flat} if for any $U\subseteq X$, $A,B\in\mathcal{B}(U)$ the sheaf
$\shHom_\mathcal{B}(A,B)$ is flat (as a sheaf of $R$-modules).
\begin{lemma}[\cite{BGNT1}, Lemma 6.2]\label{lemma: def of algd is algd}
Suppose that $\mathcal{B}$ is a flat $R$-linear stack on $X$ such that
$\widetilde{\mathcal{B}\otimes_R k}$ is an algebroid stack. Then $\mathcal{B}$ is an
algebroid stack.
\end{lemma}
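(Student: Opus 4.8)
The statement to prove is that if $\mathcal{B}$ is a flat $R$-linear stack on $X$ such that $\widetilde{\mathcal{B}\otimes_R k}$ is an algebroid stack, then $\mathcal{B}$ itself is an algebroid stack; that is, $\mathcal{B}$ is locally nonempty and locally connected. The plan is to transfer each of the two defining conditions from the known algebroid stack $\widetilde{\mathcal{B}\otimes_R k}$ back up to $\mathcal{B}$, using that $R$ is an Artin local $k$-algebra (so that $\mathfrak{m}_R$ is nilpotent and $R\to k$ is a surjection with nilpotent kernel), together with flatness of the Hom-sheaves of $\mathcal{B}$.

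First I would handle local nonemptiness. Fix $x\in X$. Since $\widetilde{\mathcal{B}\otimes_R k}$ is an algebroid stack, there is a neighborhood $U$ of $x$ on which it has an object. Because the stack $\widetilde{\mathcal{B}\otimes_R k}$ is the stackification of the prestack $U\mapsto (\mathcal{B}\otimes_R k)(U)=\mathcal{B}(U)\otimes_R k$, an object over $U$ is, after possibly shrinking $U$, represented by an object of $\mathcal{B}(U)\otimes_R k$, which has the \emph{same} objects as $\mathcal{B}(U)$. Hence $\mathcal{B}(U)$ is already nonempty after shrinking, giving condition (1). (Note no deformation-theoretic obstruction enters here: reduction mod $\mathfrak{m}_R$ does not change the class of objects, only the morphisms.)

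Next I would address local connectedness, which is the substantive step. Let $A,B\in\mathcal{B}(U)$ and $x\in U$. Their images $\bar A,\bar B$ in $\widetilde{\mathcal{B}\otimes_R k}$ are locally isomorphic, so after shrinking $U$ we may pick an isomorphism $\bar\varphi\colon \bar A\to\bar B$ in $(\mathcal{B}\otimes_R k)(U)=\mathcal{B}(U)\otimes_R k$ (again using that an isomorphism in the stackification is, locally, represented in the prestack, and that the prestack $\mathcal{B}\otimes_R k$ has the same objects as $\mathcal{B}$). Thus $\bar\varphi\in\Hom_{\mathcal{B}}(A,B)\otimes_R k = \shHom_\mathcal{B}(A,B)_x/\mathfrak{m}_R\cdot\shHom_\mathcal{B}(A,B)_x$ after passing to stalks (shrinking $U$ further so that the section is defined). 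Lift $\bar\varphi$ to a germ $\varphi\in\shHom_\mathcal{B}(A,B)_x$, represented by a morphism $\varphi\colon A|_V\to B|_V$ on some neighborhood $V\subseteq U$ of $x$. The claim is that, after shrinking $V$, $\varphi$ is an isomorphism. To see this, consider $\psi\circ\varphi$ and $\varphi\circ\psi$ where $\psi$ is a germ of a lift of $\bar\varphi^{-1}$; these reduce to the identity mod $\mathfrak{m}_R$, so they lie in $1 + \mathfrak{m}_R\cdot\shEnd_\mathcal{B}(A)_x$ and $1+\mathfrak{m}_R\cdot\shEnd_\mathcal{B}(B)_x$ respectively. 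Since $\mathfrak{m}_R$ is nilpotent, such elements are invertible by the usual geometric series $(1-n)^{-1}=\sum_{i\geq 0} n^i$ (a finite sum), so $\varphi$ has both a left and a right inverse on a neighborhood of $x$, hence is an isomorphism there.

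The main obstacle, and the place where the hypotheses are really used, is the last step: concluding that a morphism reducing to an isomorphism mod $\mathfrak{m}_R$ is itself an isomorphism. Flatness of $\shHom_\mathcal{B}(A,B)$ over $R$ is what guarantees that the reduction map $\shHom_\mathcal{B}(A,B)\to\shHom_\mathcal{B}(A,B)\otimes_R k$ behaves well (e.g. that surjectivity of a morphism of flat modules can be tested mod $\mathfrak{m}_R$, via Nakayama for the finitely generated situation or directly from nilpotence of $\mathfrak{m}_R$), while Artinianness of $R$ provides the nilpotence of $\mathfrak{m}_R$ that makes $1+(\text{nilpotent})$ invertible. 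Once these are in place the argument is a standard deformation-theoretic lifting, and the remaining bookkeeping — compatibility of stackification with the $\otimes_R k$ operation, and the fact that shrinking neighborhoods finitely many times is harmless — is routine and can be left to the reader or cited from \cite{BGNT1}.
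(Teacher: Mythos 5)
Your argument is correct and is the standard one (the paper itself states this lemma only as a citation of Lemma~6.2 of \cite{BGNT1}, without reproducing a proof): objects of the prestack $\mathcal{B}\otimes_R k$ coincide with those of $\mathcal{B}$, so local nonemptiness descends immediately, and for local connectedness one lifts a local isomorphism mod $\mathfrak{m}_R$ together with its inverse and observes that both composites have the form $\mathrm{id}+n$ with $n\in\mathfrak{m}_R\cdot\shEnd$, hence are invertible by a finite geometric series, so the lift is an isomorphism on a neighborhood. One small remark: flatness of $\shHom_{\mathcal{B}}(A,B)$ is not what makes the lifting step work --- for \emph{any} $R$-module $M$ the reduction $M\to M\otimes_R k$ is surjective with kernel $\mathfrak{m}_R M$ --- the operative hypothesis is the nilpotence of $\mathfrak{m}_R$ coming from $R$ being Artin local, while flatness is part of the definition of the deformation groupoid and is used elsewhere.
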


\subsection{Deformations of twisted forms of $\mathcal{O}$}\label{subsection: deformations of twisted forms}
Suppose that $\mathcal{S}$ is a twisted form of $\mathcal{O}_X$. We will now describe
the DGLA controlling the deformations of $\mathcal{S}$.

Recall the DGLA
\[
\mathfrak{g}_{\mathtt{DR}}(\mathcal{J})_\omega :=  \Gamma(X;\DR(C^\bullet(\mathcal{J}))[1])_\omega
\]
introduced in \ref{subsection: Hochschild cochains in formal geometry} for arbitrary $\omega\in
\Gamma(X;\Omega^2_X\otimes\overline{\mathcal{J}})$. It satisfies the vanishing condition $\mathfrak{g}_{\mathtt{DR}}(\mathcal{J})^i_\omega = 0$ for $i \leq -2$. In particular we obtain DGLA
$\mathfrak{g}_{\mathtt{DR}}(\mathcal{J})_{\overline{B}}$ associated with  the form $\overline{B}\in\Gamma(X;\Omega^2_X \otimes \overline{\mathcal{J}})$
constructed in \ref{subsection: twisted forms}.

For a nilpotent DGLA $\mathfrak{g}$ which satisfies $\mathfrak{g}^i = 0$ for $i \leq -2$, P.~Deligne \cite{Del} and, independently, E.~Getzler \cite{G1} associated the (strict) $2$-groupoid, denoted $\MC^2(\mathfrak{g})$ (see \cite{BGNT2} 3.3.2), which we refer to as the Deligne $2$-goupoid. The following theorem follows from the results  of \cite{BGNT1}; cf. also \cite{BGNT}:

\begin{thm}\label{thm: Def is MC jets}
For any Artin algebra $R$ with maximal ideal $\mathfrak{m}_R$ there is an equivalence of
$2$-groupoids
\[
\MC^2(\mathfrak{g}_{\mathtt{DR}}(\mathcal{J})_{\overline{B}}\otimes\mathfrak{m}_R) \cong \Def(\mathcal{S})(R)
\]
natural in $R$.
\end{thm}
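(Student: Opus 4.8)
The plan is to obtain the equivalence by combining the main theorem of \cite{BGNT1} with the observation that the twisted DGLA $\mathfrak{g}_{\mathtt{DR}}(\mathcal{J})_\omega$ depends, up to isomorphism of DGLA, only on the $\nabla^{can}$-cohomology class of $\omega$, together with the functoriality of the Deligne $2$-groupoid. First I would recall from \cite{BGNT1} (Theorem~1 of loc.\ cit., quoted in the Introduction; see also \cite{BGNT}) that to a twisted form $\mathcal{S}$ one associates a $\nabla^{can}$-closed cocycle $\omega_{\mathcal{S}}\in\Gamma(X;\Omega^2_X\otimes\overline{\mathcal{J}})$ representing $[\mathcal{S}]\in H^2(\Gamma(X;\Omega^\bullet_X\otimes\overline{\mathcal{J}}),\nabla^{can})$ and a natural-in-$R$ equivalence $\MC^2(\mathfrak{g}_{\mathtt{DR}}(\mathcal{J})_{\omega_{\mathcal{S}}}\otimes\mathfrak{m}_R)\cong\Def(\mathcal{S})(R)$; the argument of \cite{BGNT1} is local and is phrased entirely in terms of jets, Hochschild cochains and the Deligne construction, all of which were set up in Sections \ref{Calculus in the presence of distribution} and \ref{subsection: Hochschild cochains in formal geometry} in the presence of the distribution $\mathcal{P}$, so it applies verbatim with $\mathcal{J}=\mathcal{J}_{X,\mathcal{P}}$ and $\mathcal{O}_{X/\mathcal{P}}$ in place of $\mathcal{O}_X$. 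It then remains to compare $\mathfrak{g}_{\mathtt{DR}}(\mathcal{J})_{\omega_{\mathcal{S}}}$ with $\mathfrak{g}_{\mathtt{DR}}(\mathcal{J})_{\overline{B}}$, where $\overline{B}$ is the cocycle of \ref{subsection: twisted forms}, which by construction also represents $[\mathcal{S}]$.

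The second step is the change of representative. Since the sheaves $\Omega^i_X\otimes\overline{\mathcal{J}}$ are fine (being $\mathcal{O}_X$-modules), $\Gamma(X;\Omega^\bullet_X\otimes\overline{\mathcal{J}})$ computes $H^\bullet(X;\DR(\overline{\mathcal{J}}))$, so $\omega_{\mathcal{S}}-\overline{B}=\nabla^{can}\eta$ for some $\eta\in\Gamma(X;\Omega^1_X\otimes\overline{\mathcal{J}})$. By the same formula as in \ref{subsection: Hochschild cochains in formal geometry}, $\eta$ acts on $\Omega^\bullet_X\otimes C^\bullet(\mathcal{J})$ by a degree-zero operation $i_\eta$ which differentiates the whole brace structure (hence the Gerstenhaber bracket and the cup product), commutes with the Hochschild differential $\delta$, satisfies $[i_\eta,i_{\omega_{\mathcal{S}}}]=0$, and satisfies $[\nabla^{can},i_\eta]=\pm i_{\nabla^{can}\eta}$. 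As $C^p$ vanishes for $p<0$, the operation $i_\eta$ is locally nilpotent on each total-degree component, so $\exp(i_\eta)$ is a well-defined automorphism of the graded Lie algebra $\mathfrak{g}_{\mathtt{DR}}(\mathcal{J})$; choosing the sign of $\eta$ appropriately, these relations show that $\exp(i_\eta)$ conjugates $\nabla^{can}+\delta+i_{\omega_{\mathcal{S}}}$ into $\nabla^{can}+\delta+i_{\overline{B}}$, i.e.
\[
\exp(i_\eta)\colon \mathfrak{g}_{\mathtt{DR}}(\mathcal{J})_{\omega_{\mathcal{S}}}\ \xrightarrow{\ \sim\ }\ \mathfrak{g}_{\mathtt{DR}}(\mathcal{J})_{\overline{B}}
\]
is an isomorphism of DGLA. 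Tensoring with $\mathfrak{m}_R$ yields an isomorphism of nilpotent DGLA, both satisfying the vanishing $\mathfrak{g}^i=0$ for $i\leq-2$, and this isomorphism is natural in $R$ because it is induced by the single universal element $\eta$.

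To conclude, I would invoke the functoriality of the Deligne $2$-groupoid: $\mathfrak{g}\mapsto\MC^2(\mathfrak{g})$ is a functor on nilpotent DGLA with $\mathfrak{g}^i=0$ for $i\leq-2$ and sends isomorphisms to isomorphisms of $2$-groupoids (\cite{Del}, \cite{G1}; see also \cite{BGNT2}). Applying this to $\exp(i_\eta)\otimes\mathfrak{m}_R$ and composing with the equivalence of the first step gives the desired natural-in-$R$ equivalence $\MC^2(\mathfrak{g}_{\mathtt{DR}}(\mathcal{J})_{\overline{B}}\otimes\mathfrak{m}_R)\cong\Def(\mathcal{S})(R)$.

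The step I expect to require the most care is the comparison hidden in the first paragraph: namely, identifying the cocycle that \cite{BGNT1} extracts from the gluing data of the algebroid stack $\mathcal{S}$ with a representative of the class $[\mathcal{S}]$ as normalized here via the logarithm map $\mathcal{O}^\times_{X/\mathcal{P}}/\mathbb{C}^\times\to\mathcal{O}_{X/\mathcal{P}}/\mathbb{C}\to\DR(\overline{\mathcal{J}})$, and checking that the arguments of \cite{BGNT1} indeed go through unchanged in the presence of the distribution $\mathcal{P}$. Once this matching is granted, the remaining verifications (the commutation relations for $i_\eta$ and the convergence of $\exp(i_\eta)$) are routine.
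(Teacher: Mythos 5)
Your proposal is correct and follows essentially the same route as the paper: Theorem \ref{thm: Def is MC jets} is not proved in the text but is simply quoted as a consequence of Theorem 1 of \cite{BGNT1}, which already gives the equivalence $\MC^2(\mathfrak{g}_{\mathtt{DR}}(\mathcal{J})_\omega\otimes\mathfrak{m}_R)\cong\Def(\mathcal{S})(R)$ for a representative $\omega$ of $[\mathcal{S}]$. The only additional content in your write-up is the (correct and routine) observation that cohomologous twists $\omega_{\mathcal{S}}$ and $\overline{B}$ yield DGLA conjugate by $\exp(i_\eta)$ --- a degree-zero, locally nilpotent derivation of the Gerstenhaber bracket commuting with $\delta$ --- which the paper leaves implicit.
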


The main result of the present paper (Theorem \ref{thm:main} below) is a quasi-classical description of $\Def(\mathcal{S})$, that is to say, in terms of the $L_\infty$-algebra $\mathfrak{s}(\mathcal{O}_{X/\mathcal{P}})_H$ defined in \ref{subsection: dolbeault complexes} in the situation when $X$ is a $C^\infty$-manifold which admits a pair of complementary integrable complex distributions $\mathcal{P}$ and $\mathcal{Q}$ satisfying \eqref{sufficient condition}.

The statement of the result, which is analogous to that of Theorem \ref{thm: Def is MC jets}, requires a suitable replacement for the Deligne $2$-groupoid as the latter is defined only for nilpotent DGLA and not for nilpotent $L_\infty$-algebras satisfying the same vanishing condition.

The requisite extension of the domain of the Deligne $2$-groupoid functor is provided by the theory of J.W.~Duskin (\cite{D}). Namely, for a nilpotent $L_\infty$-algebra $\mathfrak{g}$ which satisfies $\mathfrak{g}^i = 0$ for $i \leq -2$, we consider the $2$-groupoid $\bicat\Pi_2(\Sigma(\mathfrak{g}))$. Here, $\Sigma(\mathfrak{g})$ is the Kan simplicial set defined for any nilpotent $L_\infty$-algebra (see \cite{BGNT2} 3.2 for the definition and properties) and $\Pi_2$ is the projector on Kan simplicial sets of Duskin (\cite{D}) which is supplied with a natural transformation $\id \to \Pi_2$. The latter transformation induces isomorphisms on sets of connected components as well as homotopy groups in degrees one and two (component by component), while higher homotopy groups of a simplicial set in the image of $\Pi_2$ vanish (component by component).

In \cite{D} the image of $\Pi_2$ is characterized as the simplicial sets arising as simplicial nerves of bi-groupoids (see \cite{BGNT2} 2.1.3 and 2.2) and $\bicat$ denotes the functor which ``reads the bi-groupoid off" the combinatorics of its simplicial nerve. For example, in our situation the simplicial set $\Pi_2(\Sigma(\mathfrak{g}))$ is the simplicial nerve of $\bicat\Pi_2(\Sigma(\mathfrak{g}))$.

The fact that $\mathfrak{g} \mapsto \bicat\Pi_2(\Sigma(\mathfrak{g}))$ is indeed an extension of the Deligne $2$-groupoid functor (up to natural equivalence) is the  principal result of \cite{BGNT2}. Theorem 3.7 (alternatively, Theorem 6.6) of loc. cit. states that, for a nilpotent DGLA $\mathfrak{g}$ which satisfies $\mathfrak{g}^i = 0$ for $i \leq -2$, $\Sigma(\mathfrak{g})$ and the simplicial nerve of the Deligne $2$-groupoid of $\mathfrak{g}$ are canonically homotopy equivalent. This implies that the Deligne $2$-groupoid of $\mathfrak{g}$ is canonically equivalent to $\bicat\Pi_2(\Sigma(\mathfrak{g}))$.

\begin{thm}\label{thm:main}
Suppose that $X$ is a $C^\infty$-manifold equipped with a pair of complementary complex integrable distributions $\mathcal{P}$ and $\mathcal{Q}$, and $\mathcal{S}$ is a twisted form of $\mathcal{O}_{X/\mathcal{P}}$ (\ref{subsection:
twisted forms}). Let $H\in\Gamma(X;F_{-1}\Omega^3_X)$ be a representative of
$[\mathcal{S}]$ (\ref{subsection: twisted forms}). Then, for any Artin algebra $R$ with maximal ideal
$\mathfrak{m}_R$ there is an equivalence of bi-groupoids
\[
\bicat\Pi_2(\Sigma(\mathfrak{s}(\mathcal{O}_{X/\mathcal{P}})_H\otimes\mathfrak{m}_R)) \cong \Def(\mathcal{S})(R),
\]
natural in $R$.
\end{thm}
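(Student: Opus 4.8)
\section*{Plan of proof of Theorem \ref{thm:main}}

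The proof proceeds by concatenating a short chain of equivalences that is already essentially assembled from the results above. The strategy is: replace $\Def(\mathcal{S})(R)$ by the Deligne $2$-groupoid of the jet DGLA via Theorem \ref{thm: Def is MC jets}, re-express the latter as $\bicat\Pi_2(\Sigma(-))$ using the comparison of \cite{BGNT2}, and then transport along the $L_\infty$-quasi-isomorphism of Theorem \ref{thm: hoch jet is schouten twist} using the homotopy invariance of $\Sigma$. First I would fix, exactly as in \ref{subsection: twisted forms}, a representative $\overline{B}\in\Gamma(X;\Omega^2_X\otimes\overline{\mathcal{J}})$ of $[\mathcal{S}]$, a lift $B\in\Gamma(X;\Omega^2_X\otimes\mathcal{J})$, and the closed form $H\in\Gamma(X;F_{-1}\Omega^3_X)$ chosen so that $\dR B$ represents the class of $j^\infty(H)$; with these choices the hypotheses of Theorem \ref{thm: hoch jet is schouten twist} hold by construction, so $\mathfrak{g}_{\mathtt{DR}}(\mathcal{J})_{\overline{B}}$ and $\mathfrak{s}(\mathcal{O}_{X/\mathcal{P}})_H$ are $L_\infty$-quasi-isomorphic. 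I would also record that the right-hand side of the statement is well defined: a cohomologous representative $H$ of $[\mathcal{S}]$ produces, by the remarks following the Lemma in \ref{subsection: dolbeault complexes}, a gauge-equivalent Maurer--Cartan element and hence an $L_\infty$-isomorphic algebra $\mathfrak{s}(\mathcal{O}_{X/\mathcal{P}})_H$, so $\bicat\Pi_2(\Sigma(\mathfrak{s}(\mathcal{O}_{X/\mathcal{P}})_H\otimes\mathfrak{m}_R))$ depends only on $\mathcal{S}$.

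Now fix an Artin algebra $R$. Tensoring with the nilpotent ideal $\mathfrak{m}_R$ produces nilpotent $L_\infty$-algebras $\mathfrak{g}_{\mathtt{DR}}(\mathcal{J})_{\overline{B}}\otimes\mathfrak{m}_R$ and $\mathfrak{s}(\mathcal{O}_{X/\mathcal{P}})_H\otimes\mathfrak{m}_R$, both vanishing in degrees $\leq -2$ (for $\mathfrak{s}$ this is because $\Omega^{0,q}_X\otimes_{\mathcal{O}_{X/\mathcal{P}}}V^p(\mathcal{O}_{X/\mathcal{P}})$ sits in non-negative degree before the shift $[1]$). Since $\mathfrak{m}_R$ is a finite-dimensional $k$-vector space, $-\otimes_k\mathfrak{m}_R$ is exact, so the $L_\infty$-quasi-isomorphism of Theorem \ref{thm: hoch jet is schouten twist} induces an $L_\infty$-quasi-isomorphism $\mathfrak{g}_{\mathtt{DR}}(\mathcal{J})_{\overline{B}}\otimes\mathfrak{m}_R\to\mathfrak{s}(\mathcal{O}_{X/\mathcal{P}})_H\otimes\mathfrak{m}_R$ of nilpotent $L_\infty$-algebras. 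By the invariance of the Hinich--Getzler functor $\Sigma$ under $L_\infty$-quasi-isomorphisms (\cite{H1}, \cite{G1}), the induced map of Kan simplicial sets $\Sigma(\mathfrak{g}_{\mathtt{DR}}(\mathcal{J})_{\overline{B}}\otimes\mathfrak{m}_R)\to\Sigma(\mathfrak{s}(\mathcal{O}_{X/\mathcal{P}})_H\otimes\mathfrak{m}_R)$ is a weak, hence (both targets being Kan) a homotopy, equivalence. Applying $\Pi_2$ and then $\bicat$ — $\Pi_2$ preserves $\pi_{\leq 2}$ and kills higher homotopy, so it carries a homotopy equivalence of Kan complexes to a homotopy equivalence of their $2$-truncations, which $\bicat$ turns into an equivalence of bi-groupoids — yields an equivalence
\[
\bicat\Pi_2(\Sigma(\mathfrak{g}_{\mathtt{DR}}(\mathcal{J})_{\overline{B}}\otimes\mathfrak{m}_R))\cong\bicat\Pi_2(\Sigma(\mathfrak{s}(\mathcal{O}_{X/\mathcal{P}})_H\otimes\mathfrak{m}_R)).
\]
On the other side, $\mathfrak{g}_{\mathtt{DR}}(\mathcal{J})_{\overline{B}}\otimes\mathfrak{m}_R$ is a nilpotent \emph{DGLA} vanishing in degrees $\leq -2$, so Theorem 3.7 (equivalently Theorem 6.6) of \cite{BGNT2} gives $\MC^2(\mathfrak{g}_{\mathtt{DR}}(\mathcal{J})_{\overline{B}}\otimes\mathfrak{m}_R)\cong\bicat\Pi_2(\Sigma(\mathfrak{g}_{\mathtt{DR}}(\mathcal{J})_{\overline{B}}\otimes\mathfrak{m}_R))$, while Theorem \ref{thm: Def is MC jets} gives $\MC^2(\mathfrak{g}_{\mathtt{DR}}(\mathcal{J})_{\overline{B}}\otimes\mathfrak{m}_R)\cong\Def(\mathcal{S})(R)$. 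Concatenating the three equivalences produces $\bicat\Pi_2(\Sigma(\mathfrak{s}(\mathcal{O}_{X/\mathcal{P}})_H\otimes\mathfrak{m}_R))\cong\Def(\mathcal{S})(R)$.

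It remains to check naturality in $R$. Theorem \ref{thm: Def is MC jets} and the $\MC^2$-versus-$\bicat\Pi_2\Sigma$ comparison of \cite{BGNT2} are natural in $R$; and the middle equivalence is obtained by applying the $R$-independent composite functor $R\mapsto\bicat\Pi_2(\Sigma(-\otimes\mathfrak{m}_R))$ to the single fixed $L_\infty$-quasi-isomorphism of Theorem \ref{thm: hoch jet is schouten twist}, so it is the component at $R$ of a natural transformation of functors of $R$ (a map $R\to R'$ of Artin local $k$-algebras sends $\mathfrak{m}_R$ into $\mathfrak{m}_{R'}$, so $R\mapsto\mathfrak{g}\otimes\mathfrak{m}_R$ is functorial). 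Hence the composite equivalence is natural in $R$, which completes the argument. The step I expect to demand the most care is precisely this naturality bookkeeping, together with verifying that the invariance statement for $\Sigma$ — originally phrased for quasi-isomorphisms of nilpotent $L_\infty$-algebras — applies verbatim to the morphism produced by Theorem \ref{thm: hoch jet is schouten twist} after tensoring with $\mathfrak{m}_R$, i.e.\ that this morphism really is an $L_\infty$-quasi-isomorphism of nilpotent $L_\infty$-algebras in the precise sense used in \cite{G1,H1,BGNT2}; everything else is a matter of stringing together the cited results.
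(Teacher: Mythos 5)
Your proposal is correct and follows essentially the same route as the paper: fix $\overline{B}$, $B$, $H$ as in \ref{subsection: twisted forms}, invoke Theorem \ref{thm: hoch jet is schouten twist} together with the homotopy invariance of $\Sigma$ (the paper cites Proposition 3.4 of \cite{BGNT2} for this, rather than \cite{H1,G1} directly), pass through Duskin's $\Pi_2$ and Theorem 3.7 of \cite{BGNT2} to identify $\bicat\Pi_2(\Sigma(-))$ with $\MC^2$ of the jet DGLA, and finish with Theorem \ref{thm: Def is MC jets}. The only cosmetic difference is that the paper chains homotopy equivalences at the level of simplicial nerves before reading off the equivalence of $2$-groupoids, whereas you apply $\bicat\Pi_2$ directly to the weak equivalence; your added remarks on well-definedness and naturality are sound and, if anything, more careful than the paper's.
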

\begin{proof}
We refer the reader to \cite{BGNT2} for notations.

By Theorem \ref{thm: hoch jet is schouten twist}, $\mathfrak{s}(\mathcal{O}_{X/\mathcal{P}})_H$ is
$L_\infty$-quasi-isomorphic to $\mathfrak{g}_{\mathtt{DR}}(\mathcal{J})_{\overline{B}}$. Proposition 3.4 of \cite{BGNT2} implies that
$\Sigma(\mathfrak{s}(\mathcal{O}_{X/\mathcal{P}})_H\otimes\mathfrak{m}_R)$ is weakly equivalent to $\Sigma(\mathfrak{g}_{\mathtt{DR}}(\mathcal{J})_{\overline{B}}\otimes\mathfrak{m}_R)$.
In particular, $\Sigma(\mathfrak{s}(\mathcal{O}_{X/\mathcal{P}})_H\otimes\mathfrak{m}_R)$ is a Kan simplicial set with homotopy groups vanishing in dimensions larger then two. By Duskin (cf. \cite{D}), the natural transformation $\id \to \Pi_2$ induces a homotopy equivalence between $\Sigma(\mathfrak{s}(\mathcal{O}_{X/\mathcal{P}})_H\otimes\mathfrak{m}_R)$ and $\mathfrak{N}
\bicat\Pi_2(\Sigma(\mathfrak{s}(\mathcal{O}_{X/\mathcal{P}})_H\otimes\mathfrak{m}_R))$, the nerve of the two-groupoid $\bicat\Pi_2(\Sigma(\mathfrak{s}(\mathcal{O}_{X/\mathcal{P}})_H\otimes\mathfrak{m}_R))$.

On the other hand, by Theorem 3.7 (alternatively, Theorem 6.6) of \cite{BGNT2},
$\Sigma(\mathfrak{g}_{\mathtt{DR}}(\mathcal{J})_{\overline{B}}\otimes\mathfrak{m}_R)$ is homotopy equivalent to
$\mathfrak{N}\MC^2(\mathfrak{g}_{\mathtt{DR}}(\mathcal{J})_{\overline{B}}\otimes\mathfrak{m}_R)$. Combining all of the above equivalences we obtain an equivalence of $2$-groupoids
\[
\bicat\Pi_2(\Sigma(\mathfrak{s}(\mathcal{O}_{X/\mathcal{P}})_H\otimes\mathfrak{m}_R)) \cong
\MC^2(\mathfrak{g}_{\mathtt{DR}}(\mathcal{J})_{\overline{B}}\otimes\mathfrak{m}_R)
\]
The result now follows from Theorem \ref{thm: Def is MC jets}.
\end{proof}

\begin{remark}
In the case when $\mathcal{P}=0$, i.e. $X$ is a plain $C^\infty$-manifold isomorphism classes of formal deformations of $\mathcal S$ are in bijective
correspondence with equivalence classes of Maurer-Cartan elements of the
$L_\infty$-algebra $\mathfrak{s}_{\mathtt{DR}}(\mathcal{O}_X)_H \otimes {\mathfrak
{m}}_R$. These are the \emph{twisted Poisson structures} in the terminology of
\cite{SW}, i.e. elements $\pi \in \Gamma(X; \bigwedge^2\mathcal{T}_X)\otimes
{\mathfrak {m}}_R$, satisfying the equation
\[
[\pi,\pi] = \Phi(H)(\pi,\pi,\pi).
\]
\end{remark}

\end{document}